
\documentclass[preprint,11pt]{elsarticle}%
\usepackage{amssymb}
\usepackage{amsmath,amsfonts}
\usepackage{subfigure,epstopdf}
\usepackage{amsmath}
\usepackage{amsfonts}
\usepackage{graphicx}%
\setcounter{MaxMatrixCols}{30}
\providecommand{\U}[1]{\protect\rule{.1in}{.1in}}
\newtheorem{theorem}{Theorem}
\newtheorem{lemma}[theorem]{Lemma}
\newdefinition{remark}{Remark}
\newproof{proof}{Proof}
\newtheorem{proposition}[theorem]{Proposition}
\newtheorem{corollary}[theorem]{Corollary}
\begin{document}
\begin{frontmatter}
\title{On  the  mean field approximation of a stochastic model of tumor-induced angiogenesis}
\author{Vincenzo Capasso \corref{cor1}}
\ead{vincenzo.capasso@unimi.it}
\address{ADAMSS (Centre for Advanced Applied Mathematical and Statistical Sciences), \\  Universit\'a degli Studi di Milano, \\ Via Saldini 50. 20133 Milano, Italy}
\author{Franco  Flandoli \corref{}}
\ead{flandoli@dma.unipi.it}
\address{Department of Mathematics, University of  Pisa,\\
Largo Pontecorvo 5, 56127 Pisa, Italy }
\cortext[cor1]{Corresponding author}
\begin{abstract}
In the field of Life Sciences it it  very common to deal with
extremely complex systems, from both  analytical and computational
points of view, due to the unavoidable coupling of different
interacting structures.  As an example, angiogenesis has revealed
to be an highly complex, and  extremely interesting biomedical
problem, due  to the strong coupling between the kinetic
parameters   of the relevant  branching -  growth - anastomosis
stochastic processes of the capillary network, at the microscale,
and   the family of interacting  underlying biochemical fields, at
the macroscale. In this paper  an original revisited    conceptual
stochastic model of tumor  driven angiogenesis  has been proposed,
for which it has been shown that it is possible to reduce
complexity by taking advantage of the intrinsic multiscale
structure of the system; one may  keep the stochasticity of the
dynamics of the vessel tips at their natural microscale, whereas
the dynamics of the underlying fields is given by a deterministic
mean field approximation obtained by an averaging at a suitable
mesoscale. While in previous papers only an heuristic
justification of this approach had been offered, in this paper a
rigorous proof is given of the so called ``propagation of chaos",
which leads to a mean field approximation of the  stochastic
relevant measures associated with the vessel dynamics, and
consequently of the underlying TAF field. As  a side though
important  result, the non-extinction of the random process  of
tips has been  proven during any finite time interval.
\end{abstract}
\begin{keyword} Angiogenesis, stochastic differential equations, birth and death processes, growth processes, mean field approximation, hybrid models, propagation of
chaos. \\ AMS Classification 2010:  60G57, 60H10, 60H30, 60B10,
82C31, 92B05, 60K35, 65C20, 65C35.
\end{keyword}
\end{frontmatter}

\section{Introduction}

In Life Sciences we may observe a wide spectrum of self-organization
phenomena. In most of these phenomena, randomness plays a major role; see
\cite{VK_2014} for a general discussion. As a working example, in this paper
we refer to tumor-driven angiogenesis; in this case cells organize themselves
as a capillary network of vessels, the organization being driven by a family
of underlying fields, such as nutrients, growth factors and alike
\cite{folkman_1974,jain-carmeliet,carmeliet_2005}. Indeed an angiogenic system
is extremely complex due to its intrinsic multiscale structure. We need to
consider the strong coupling between the kinetic parameters of the relevant
stochastic processes describing branching, vessel extension, and anastomosis
of the capillary network at the microscale, and the family of interacting
underlying fields at the macroscale \cite{chaplain_1998, Hubbard_etal_2009,
othmer, VK_morale_jomb, cotter}.

The kinetic parameters of the mentioned stochastic processes depend on the
concentrations of certain chemical factors which satisfy reaction-diffusion
equations (RDEs) \cite{chaplain_1998,harrington,tong}. Viceversa, the RDEs for
such underlying fields contain terms that depend on the spatial distribution
of vascular cells. As a consequence, a full mathematical model of angiogenesis
consists of the (stochastic) evolution of vessel cells, coupled with a system
of RDEs containing terms that depend on the distribution of vessels. The
latter is random and therefore the equations for the underlying fields are
random RDEs, thus inducing randomness in the kinetic parameters of the
relevant stochastic geometric processes describing the evolution of the vessel
network; we might say that the vessel dynamics is a \textquotedblleft
doubly\textquotedblright\ stochastic process.

This strong coupling leads to an highly complex mathematical problem from both
analytical and computational points of view. A possibility to reduce
complexity is offered by the so called hybrid models, which exploit the
natural multiscale nature of the system.

The idea consists of approximating the random RDEs by deterministic ones, in
which the microscale (random) terms depending on cell distributions are
replaced by their (deterministic) mesoscale averages. In this way the
mentioned kinetic parameters may be taken as depending on the mean field
approximation of the underlying fields, thus leading to a \textquotedblleft
simple" stochasticity of the random processes of branching - vessel extension
- anastomosis \cite{BCACT}.

In the literature there are examples of rigorous derivations of mean field
equations of stochastic particle dynamics \cite{oel2,sznitman,C_M},
\cite{Meleard}; however, to the best of the authors' knowledge, for the kind
of models considered here, a rigorous proof of the required \textquotedblleft
propagation of chaos" has not yet been given, though previous attempts have
led to heuristic derivations (see \cite{BCAC3}, \cite{BCACT} and references therein).

Eventually, in this paper the authors have been able to derive mean field
equations with the required, non trivial, rigorous approach. As a side result
to understand the impact of anastomosis, in the Appendix, it has been proven
that the random measure of tips never vanishes during any finite time interval
(see \ref{B}).

The proof that the number of new tips cannot growth without
control, given in Section \ref{section upper bound}, is highly
nontrivial. This is the first work that deals rigorously with this
question, namely the size of growth when tips may emerge from
created vessels and the length of the vessel is potentially
unbounded, in finite time, due to the Gaussian fluctuations of the
noise. The usual control from above by a Yule process does not
work here and new tools have been used. At the technical level,
let us also highlight the proof of uniqueness of measure-valued
solutions, that seems to be original with respect to the related
literature.

The paper is organized as follows. Section \ref{sec:ANGIO_model} introduces
our stochastic model and all relevant random measures associated with. Section
\ref{sec:evolution_empirical} is devoted to the evolution of the empirical
measure and an heuristic derivation of the mean field equations for the
deterministic measure of tips, and the associated TAF concentrations, based on
a conjectured \textquotedblleft propagation of chaos\textquotedblright.
Section \ref{sec:main_results} presents our main mathematical results. Section
\ref{sct proof thm convergence} contains a detailed proof of Theorem
\ref{thm convergence}, as far as the tightness of the sequence of laws of
$(Q_{N},C_{N})_{N\in\mathbb{N}}$ is concerned, and consequently the existence
of a weakly convergence subsequence, thus anticipating the existence part
claimed in Theorem \ref{thm uniqueness}. All required estimates are rigorously
derived here. Section \ref{sct uniqueness} is devoted to the proof of Theorem
\ref{thm uniqueness}, as far as the claimed uniqueness is concerned.

\section{A mathematical model for tumor induced
angiogenesis\label{sec:ANGIO_model}}

The main features of the process of formation of a tumor-driven vessel network
are (see \cite{chaplain_stuart:93}, \cite{Hubbard_etal_2009}, \cite{BCACT})

\begin{itemize}
\item[i)] vessel branching;

\item[ii)] vessel extension;

\item[iii)] chemotaxis in response to a generic tumor \ angiogenic factor
(TAF), released by tumor cells;

\item[iv)] haptotactic migration in response to fibronectin gradient, emerging
from the extracellular matrix and through degradation and production by
endothelial cells themselves;

\item[v)] anastomosis, \ the coalescence of a capillary tip with an existing vessel.
\end{itemize}

We will limit ourselves to describe the dynamics of tip cells at the front of
growing vessels, as a consequence of chemotaxis in response to a generic tumor
factor (TAF) released by tumor cells, in a space $\mathbb{R}^{d},$ of
dimension $d\in\{2,3\}$.

The number of tip cells changes in time, due to proliferation and death. We
shall denote by $N_{t}$ the random number of active tip cells at time
$t\in\mathbb{R}_{+}.$ We shall refer to $N:=N_{0}$ as the scale parameter of
the system. The $i$-th tip cell is characterized by the random variables
$T^{i,N}$ and $\Theta^{i,N},$ representing the birth (branching) and death
(anastomosis) times, respectively, and by its position and velocity $\left(
\mathbf{X}^{i,N}\left(  t\right)  ,\mathbf{V}^{i,N}\left(  t\right)  \right)
\in\mathbb{R}^{2d}$, $t\in\lbrack T^{i,N},\Theta^{i,N}).$ Its entire history
is then given by the stochastic process
\[
\left(  \mathbf{X}^{i,N}\left(  t\right)  ,\mathbf{V}^{i,N}\left(  t\right)
\right)  _{t\in\lbrack T^{i,N},\Theta^{i,N})}.
\]

All random variables and processes are defined on a filtered probability space
$\left(  \Omega,\mathcal{F},\mathcal{F}_{t},\mathbb{P}\right)  $.

The growth factor is a random function $C_{N}:\Omega\times\lbrack
0,\infty)\times\mathbb{R}^{d}\rightarrow\mathbb{R}$, that we write as
$C_{N}\left(  t,\mathbf{x}\right)  $.

Tip cells and growth factor satisfy the stochastic system%
\begin{align}
d\mathbf{X}^{i,N}\left(  t\right)   &  =\mathbf{V}^{i,N}\left(  t\right)
dt\label{vessel_extension}\\
d\mathbf{V}^{i,N}\left(  t\right)   &  =\left[  -k_{1}\mathbf{V}^{i,N}\left(
t\right)  +f\left(  |\nabla C_{N}\left(  t,\mathbf{X}^{i,N}\left(  t\right)
\right)  |\right)  \nabla C_{N}\left(  t,\mathbf{X}^{i,N}\left(  t\right)
\right)  \right]  dt\nonumber\\
&  +\sigma d\mathbf{W}^{i}\left(  t\right)  \label{velocity}%
\end{align}%
\begin{equation}
\partial_{t}C_{N}\left(  t,\mathbf{x}\right)  =k_{2}\delta_{A}\left(
\mathbf{x}\right)  +d_{1}\Delta C_{N}\left(  t,\mathbf{x}\right)  -\eta\left(
t,\mathbf{x},\left\{  Q_{N}\left(  s\right)  \right\}  _{s\in\left[
0,t\right]  }\right)  C_{N}\left(  t,\mathbf{x}\right)  \label{eq CN}%
\end{equation}
where $k_{1},k_{2},\sigma,d_{1}>0$, are given; $\mathbf{W}^{i}\left(
t\right)  $, $i\in N$, are independent Brownian motions; $A$ is a Borel set of
$\mathbb{R}^{d},$ representing the tumoral region acting as a source of TAF;
$\delta_{A}$ is the generalized derivative of the Dirac measure $\epsilon_{A}$
with respect to the usual Lebesgue measure $L^{d}$ on $B_{\mathbb{R}^{d}},$
i.e. for any $B\in B_{\mathbb{R}^{d}},$
\[
\epsilon_{A}(B)=\int_{B}\delta_{A}(\mathbf{x})d\mathbf{x}=\mathcal{L}%
^{d}(B\cap A).
\]

The initial condition $C_{N}\left(  0,\mathbf{x}\right)  $ is also given,
while the initial conditions on $X^{i,N}\left(  t\right)  ,$ and
$V^{i,N}\left(  t\right)  $ depend upon the process at the time of birth
$T^{i,N}$ of the $i-$th tip;\ the term $\eta\left(  t,\mathbf{x},\left\{
Q_{N}\left(  s\right)  \right\}  _{s\in\left[  0,t\right]  }\right)  $ will be
described below.

In the Equation (\ref{velocity}), besides the friction force, there is a
chemotactic force due to the underlying TAF field $C_{N}(t,x);$ different from
relevant literature (see e.g. \cite{chaplain_1998}, \cite{plank_sleeman:03}),
here we assume that $f$ depends upon the absolute value of the gradient of the
TAF field
\[
f(|\nabla C_{N}(t,\mathbf{x})|)=\frac{d_{2}}{(1+\gamma_{1}|\nabla
C_{N}(t,\mathbf{x})|)^{q}}.
\]
This choice, requested by mathematical issues as necessary bounds, leads to
upper bounds of the term%

\begin{equation}
f\left(  |\nabla C_{N}\left(  t,\mathbf{X}^{i,N}\left(  t\right)  \right)
|\right)  \nabla C_{N}\left(  t,\mathbf{X}^{i,N}\left(  t\right)  \right)
\label{saturation}%
\end{equation}
for large values of the gradient of the TAF field; indeed this makes the model
more realistic, since it bounds the effect of possible large values of this
gradient. For $q=1,$ we would have a saturating limit value for the term in
Equation (\ref{saturation}).

Let us describe the term $\eta_{N}\left(  t,\mathbf{x},\left\{  Q_{N}\left(
s\right)  \right\}  _{s\in\left[  0,t\right]  }\right)  $. For every $t\geq0$,
we introduce the scaled measure on $\mathbb{R}^{d}$
\begin{equation}
Q_{N}\left(  t\right)  :=\frac{1}{N}\sum_{i=1}^{N_{t}}\mathbb{I}_{t\in\lbrack
T^{i,N},\Theta^{i,N})}\epsilon_{\left(  \mathbf{X}^{i,N}\left(  t\right)
,\mathbf{V}^{i,N}\left(  t\right)  \right)  }, \label{empirical meas}%
\end{equation}
where $\epsilon$ denotes the usual Dirac measure, having the Dirac delta
$\delta$ as its generalized density with respect to the usual Lebesgue
measure. With these notations, and denoting by $\mathcal{M}_{+}\left(
\mathbb{R}^{d}\times\mathbb{R}^{d}\right)  $ the set of all finite positive
Borel measures on $\mathbb{R}^{d}\times\mathbb{R}^{d}$, we may assume that,
for every $t\geq0$, the function $\eta\left(  t,\cdot,\cdot\right)  $ maps
$\mathbb{R}^{d}\times L^{\infty}\left(  0,t;\mathcal{M}_{+}\left(
\mathbb{R}^{d}\times\mathbb{R}^{d}\right)  \right)  $ into $\mathbb{R}$:%
\[
\eta\left(  t,\cdot,\cdot\right)  :\mathbb{R}^{d}\times L^{\infty}\left(
0,t;\mathcal{M}_{+}\left(  \mathbb{R}^{d}\times\mathbb{R}^{d}\right)  \right)
\rightarrow\mathbb{R},
\]
for which we will assume the following structure:%
\[
\eta\left(  t,\mathbf{x},\left\{  Q_{N}\left(  s\right)  \right\}
_{s\in\left[  0,t\right]  }\right)  =\int_{0}^{t}\left(  \int_{\mathbb{R}%
^{d}\times\mathbb{R}^{d}}K_{1}\left(  \mathbf{x}-\mathbf{x}^{\prime}\right)
\left\vert \mathbf{v}^{\prime}\right\vert Q_{N}\left(  s\right)  \left(
d\mathbf{x}^{\prime},d\mathbf{v}^{\prime}\right)  \right)  ds
\]
for a suitable smooth bounded kernel $K_{1}:\mathbb{R}^{d}\rightarrow
\mathbb{R}$.

\subsection{The capillary network}

The capillary network of endothelial cells $\mathbf{X}^{N}(t)$ consists of the
union of all random trajectories representing the extension of individual
capillary tips from the (random) time of birth (branching) $T^{i,N},$ to the
(random) time of death (anastomosis) $\Theta^{i,N},$
\begin{equation}
\mathbf{X}{^{N}}(t)=\bigcup_{i=1}^{N_{t}}\{\mathbf{X}^{i,N}\left(  s\right)
|T^{i,N}\leq s\leq\min\{t,\Theta^{i,N}\}\}, \label{network}%
\end{equation}
giving rise to a stochastic network. Thanks to the choice of a Langevin model
for the vessels extension, we may assume that the trajectories are
sufficiently regular and have an integer Hausdorff dimension $1.$

Hence \cite{capasso_villa_2008} the random measure
\begin{equation}
A\in\mathcal{B}_{\mathbb{R}^{d}}\mapsto\mathcal{H}^{1}(\mathbf{X}{^{N}}(t)\cap
A)\in\mathbb{R}_{+}%
\end{equation}
may admit a random generalized density $\delta_{\mathbf{X}{^{N}}(t)}(x)$ with
respect to the usual Lebesgue measure on $\mathbb{R}^{d}$ such that, for any
$A\in B_{\mathbb{R}^{d}},$
\begin{equation}
\mathcal{H}^{1}(\mathbf{X}{^{N}}(t)\cap A)=\int_{A}\delta_{\mathbf{X}{^{N}%
}(t)}(\mathbf{x})d\mathbf{x}.
\end{equation}

By Theorem 11 in \cite{capasso_flandoli_lucio}, we may then state that%

\begin{equation} \label{stoch_network}
H^{1}\left(  \mathbf{X}{^{N}}(t)\cap A\right)  =\int_{0}^{t}\sum_{i=1}^{N_{s}%
}\epsilon_{\mathbf{X}^{i,N}\left(  s\right)  }\left(  A\right)  \left\vert
\frac{d}{ds}\mathbf{X}^{i,N}\left(  s\right)  \right\vert I_{s\in\lbrack
T^{i,N},\Theta^{i,N})}ds. %
\end{equation}

Hence
\begin{equation} \label{stoch_network_1}
\delta_{\mathbf{X}{^{N}}(t)}\left(  \mathbf{x}\right)
=\int_{0}^{t}\sum _{i=1}^{N_{s}}\delta_{\mathbf{X}^{i,N}\left(
s\right)  }\left( \mathbf{x}\right)  \left\vert
\frac{d}{ds}\mathbf{X}^{i,N}\left(  s\right) \right\vert
\mathbb{I}_{s\in\lbrack T^{i,N},\Theta^{i,N})}ds.
\end{equation}

With this is mind we may write%

\begin{align}
&  \eta\left(  t,\mathbf{x},\left\{  Q_{N}\left(  s\right)  \right\}
_{s\in\left[  0,t\right]  }\right)  =\nonumber\\
&  \qquad\quad=\frac{1}{N}\int_{0}^{t}ds\sum_{i=1}^{N_{s}}\mathbb{I}%
_{s\in\lbrack T^{i,N},\Theta^{i,N})}K_{1}(\mathbf{x}-\mathbf{X}^{i,N}\left(
s\right)  )|\mathbf{V}^{i,N}\left(  s\right)  |\nonumber\label{eta2}\\
&  \qquad\quad=\frac{1}{N}(K_{1}\ast\delta_{\mathbf{X}{^{N}}(t)})(\mathbf{x}).
\end{align}

\subsubsection{Branching}

Two kinds of branching have been identified, either from a tip or from a
vessel. The birth process of new tips can be described in terms of a marked
point process (see e.g. \cite{B}), by means of a random measure $\Phi$ on
$\mathcal{B}_{\mathbb{R}^{+}\times\mathbb{R}^{d}\times\mathbb{R}^{d}}$ such
that, for any $t\geq0$ and any $B\in\mathcal{B}_{\mathbb{R}^{d}\times
\mathbb{R}^{d}}$,
\begin{equation}
\Phi((0,t]\times B):=\int_{0}^{t}\!\int_{B}\Phi(ds\times d\mathbf{x}\times
d\mathbf{v}), \label{PHI}%
\end{equation}
where $\Phi(ds\times dx\times dv)$ is the random variable that counts those
tips born either from an existing tip, or from an existing vessel, during
times in $(s,s+ds]$, with positions in $(x,x+dx]$, and velocities in
$(v,v+dv]$.

As an additional simplification, we will further assume that the initial value
of the state of a new tip is $(\mathbf{X}_{T^{N_{t}+1,N}}^{N_{t}%
+1,N},\mathbf{V}_{T^{N_{t}+1,N}}^{N_{t}+1,N}),$ where $T^{N_{t}+1,N}$ is the
random time of branching, $\mathbf{X}_{T^{N_{t}+1,N}}^{N_{t}+1,N}$ is the
random point of branching, and $\mathbf{V}_{T^{N_{t}+1,N}}^{N_{t}+1,N}$ is a
random velocity, selected out of a probability distribution $G_{{v}_{0}}$ with
mean $v_{0}.$

Given the history $F_{t^{-}}$ of the whole process up to time $t^{-},$ we
assume that the compensator of the random measure $\Phi(ds\times dx\times dv)$
is given by
\begin{align*}
&  \alpha(C_{N}(s,\mathbf{x}))G_{{v}_{0}}(\mathbf{v})\,\sum_{i=1}^{N_{s}%
}\mathbb{I}_{s\in\lbrack T^{i,N},\Theta^{i,N})}\epsilon_{\mathbf{X}%
^{i,N}\left(  s\right)  }\left(  d\mathbf{x}\right)  \,d\mathbf{v}ds\\
&  +\beta(C_{N}(s,\mathbf{x}))G_{\mathbf{v}_{0}}(\mathbf{v})\epsilon
_{\mathbf{X}^{N}\left(  s\right)  }\left(  d\mathbf{x}\right)  \,d\mathbf{v}ds
\end{align*}
where $\alpha(C),\beta(C)$ are non-negative smooth functions, bounded with
bounded derivatives; for example, we may take
\[
\alpha(C)=\alpha_{1}\frac{C}{C_{R}+C},
\]
where $C_{R}$ is a reference density parameter \cite{VK_morale_jomb};\ and
similarly for $\beta(C)$.

The term corresponding to tip branching
\begin{equation}
\alpha(C_{N}(s,\mathbf{x}))G_{{v}_{0}}(\mathbf{v})\,\sum_{i=1}^{N_{s}%
}\mathbb{I}_{s\in\lbrack T^{i,N},\Theta^{i,N})}\epsilon_{\mathbf{X}%
^{i,N}\left(  s\right)  }\left(  d\mathbf{x}\right)  \,d\mathbf{v}ds
\label{tip_branching}%
\end{equation}
comes from the following argument:\ a new tip may arise only at positions
$\mathbf{X}^{i,N}\left(  s\right)  $ with $s\in\lbrack T^{i,N},\Theta^{i,N})$
(the positions of the tips existing at time $s$); the birth is modulated by
$\alpha(C_{N}(s,x))$, since we want to take into account the density of the
growth factor; and the velocity of the new tip is chosen at random with
density $G_{{v}_{0}}(v)$. It can be rewritten as%

\begin{equation}
N\alpha(C_{N}(s,x))G_{{v}_{0}}(v)dv\,\int_{\mathbb{R}^{d}}Q_{N}(s)\left(
d\mathbf{x},d\mathbf{v}\right)  \,ds, \label{branching_1}%
\end{equation}

The term corresponding to vessel branching%
\begin{equation}
\beta(C_{N}(s,\mathbf{x}))G_{\mathbf{v}_{0}}(\mathbf{v})\epsilon
_{\mathbf{X}^{N}\left(  s\right)  }\left(  d\mathbf{x}\right)  \,d\mathbf{v}ds
\label{vessel_branching}%
\end{equation}
tells us that a new tip may stem at time $s$ from a point $x$ belonging to the
stochastic network $X^{N}(s)$ already existing at time $s$, at a rate
depending on the concentration of the TAF via $\beta(C_{N}(s,x)),$ for the
reasons described above. Again the velocity of the new tip is chosen at random
with density $G_{{v}_{0}}(v)$.

Because of (\ref{stoch_network}) it can be rewritten as
\begin{equation}
N\beta(C_{N}(s,\mathbf{x}))G_{\mathbf{v}_{v_{0}}}(\mathbf{v})d\mathbf{v}%
\int_{0}^{s}\int_{\mathbb{R}^{d}}\left\vert \mathbf{v}\right\vert Q_{N}\left(
r\right)  \left(  d\mathbf{x},d\mathbf{v}\right)  drds \label{branching_2}%
\end{equation}

\subsubsection{Anastomosis}

When a vessel tip meets an existing vessel it joins it at that point and time
and it stops moving. This process is called tip-vessel anastomosis.

As in the case of the branching process, we may model this process via a
marked counting process; anastomosis is modelled as a \textquotedblleft
death\textquotedblright\ process.

Let $\Psi$ denote the random measure on $\mathcal{B}_{\mathbb{R}^{+}%
\times\mathbb{R}^{d}\times\mathbb{R}^{d}}$ such that, for any $t\geq0,$ and
any $B\in\mathcal{B}_{\mathbb{R}^{d}\times\mathbb{R}^{d}}$,
\begin{equation}
\Psi((0,t]\times B):=\int_{0}^{t}\int_{B}\Psi(ds\times d\mathbf{x}\times
d\mathbf{v}) \label{psiB}%
\end{equation}
where $\Psi(ds\times dx\times dv)$ is the random variable counting those tips
that are absorbed by the existing vessel network during time $(s,s+ds],$ with
position in $(x,x+dx],$ and velocity in $(v,v+dv]$.

We assume that the compensator of the random measure $\Psi(ds\times dx\times
dv)$ is%

\begin{align}
&  \gamma\sum_{i=1}^{N_{s}}\mathbb{I}_{s\in\lbrack T^{i,N},\Theta^{i,N}%
)}h\left(  \frac{1}{N}\left(  K_{2}\ast\delta_{\mathbf{X}^{N}\left(  s\right)
}\right)  \left(  \mathbf{x}\right)  \right)  \epsilon_{\left(  \mathbf{X}%
^{i,N}\left(  s\right)  ,\mathbf{V}^{i,N}\left(  s\right)  \right)
}(d\mathbf{x}\times d\mathbf{v})\,ds\nonumber\label{anastomosis_1}\\
&  =\gamma N h\left(  \frac{1}{N}\left(
K_{2}\ast\delta_{\mathbf{X}^{N}\left(
s\right)  }\right)  \left(  \mathbf{x}\right)  \right)  Q_{N}(s)(d\mathbf{x}%
,d\mathbf{v})ds,
\end{align}
where $\gamma$ is a suitable constant, and $K_{2}:\mathbb{R}^{d}%
\rightarrow\mathbb{R}$ is a suitable mollifying kernel, $h:\mathbb{R}%
_{+}\rightarrow\mathbb{R}_{+}$ is a saturating function of the
form $h\left( r\right)  =\frac{r}{1+r}$. This compensator
expresses the death rate of a tip located at $\left(
\mathbf{X}^{i,N}\left( s\right)  ,\mathbf{V}^{i,N}\left( s\right)
\right)  $ at time $s$; the death rate is modulated by $\gamma$
and by a scaled thickened version of the capillary network
existing at time $s,$ given by (see Equation
(\ref{stoch_network}))
\begin{align}
\frac{1}{N}\left(  K_{2}\ast\delta_{\mathbf{X}^{N}\left(  s\right)  }\right)
\left(  \mathbf{x}\right)   &  =\int_{0}^{s}\frac{1}{N}\sum_{i=1}^{N_{r}}%
K_{2}\left(  \mathbf{x}-\mathbf{X}^{i,N}\left(  r\right)  \right)  \left\vert
\mathbf{V}^{i,N}\left(  r\right)  \right\vert \mathbb{I}_{r\in\lbrack
T^{i,N},\Theta^{i,N})}dr\nonumber\label{anastomosis_2}\\
&  =\int_{0}^{s}\int_{\mathbb{R}^{d}\times\mathbb{R}^{d}}K_{2}\left(
\mathbf{x}-\mathbf{x}^{\prime}\right)  \left\vert \mathbf{v}^{\prime
}\right\vert Q_{N}\left(  r\right)  \left(  d\mathbf{x}^{\prime}%
,d\mathbf{v}^{\prime}\right)  dr.\nonumber
\end{align}
 Let us set%
\[
g(s,\mathbf{x},\{Q_{N}(r)\}_{r\in\lbrack0,s]}):=h\left(  \frac{1}{N}\left(
K_{2}\ast\delta_{\mathbf{X}^{N}\left(  s\right)  }\right)  \left(
\mathbf{x}\right)  \right)
\]

Thanks to the above, the compensator (\ref{anastomosis_1}) can be rewritten
as
\begin{equation}
\gamma Ng(s,\mathbf{x},\{Q_{N}(r)\}_{r\in\lbrack0,s]})Q_{N}\left(  s\right)
\left(  d\mathbf{x},d\mathbf{v}\right)  ds.
\end{equation}

Here we wish to  stress a couple of  technical issues which have
led to the  substantial modification of the structure of the
compensator with respect to previous models (see e.g.
\cite{BCACT}). The first one is mainly motivated by the case of
dimension $d=3$, but then for simplicity we adopt it also in
$d=2;$  since, for mathematical convenience, we have modelled a
vessel as a $1-$dimensional curve in $\mathbb{R}^{d}$, it is
essentially impossible that anastomosis takes place, since the
probability that two curves meet in $\mathbb{R}^{3}$ is
negligible, even though they may get very close to each other: the
mathematical abstraction \textquotedblleft
vessel=curve\textquotedblright\ would have not been realistic
here. In order to overcome this technical issue, we have
introduced a thickness of the curve, described by a kernel $K_{2}$
(this is equivalent to keep vessels as curves and introduce a
thickness of tips). With this technical modification, the model
has become more realistic, since real vessels do not have
dimension $1.$  Anyhow this choice  has implied a second issue.
The thickening of  vessels induce a mathematical modelling problem
whenever the vessel network is highly dense in space; indeed in
such a situation at   a same point $\mathbf{x}$ more than one
vessel may contribute to the quantity

\begin{equation}
\frac{1}{N}\left(  K_{2}\ast\delta_{\mathbf{X}^{N}\left( s\right)}
\right ) (\mathbf{x})
\end{equation}
which is not  realistic. In order  to compete with   this
anomalous effect, we have introduced a saturation via  the
function  $h.$

Thanks to the above  considerations, on one hand we have solved
significant  modelling biases, on the other  hand we have made the
model more tractable from  a mathematical point of view.

\section{Evolution of the empirical measure}

\label{sec:evolution_empirical}

The evolution of the empirical measure $Q_{N}\left(  t\right)  $ is obtained
by application of It\^{o} formula to the expression
\[
\frac{1}{N}\sum_{i=1}^{N_{t}}\mathbb{I}_{t\in\lbrack T^{i,N},\Theta^{i,N}%
)}\phi\left(  \mathbf{X}^{i,N}\left(  t\right)  ,\mathbf{V}^{i,N}\left(
t\right)  \right)
\]
where $\phi$ is a $C^{2}$ test function.

From It\^{o}-Levy formula and the expressions of the compensators
of the branching
and anastomosis processes, we obtain the identity%

\begin{eqnarray} \label{Ito_Levy}
&& \int_{\mathbb{R}^{d}\times\mathbb{R}^{d}}\phi\left(  \mathbf{x},\mathbf{v}%
\right)  Q_{N}\left(  t\right)  \left(  d\mathbf{x},d\mathbf{v}\right)
=\int_{\mathbb{R}^{d}\times\mathbb{R}^{d}}\phi\left(  \mathbf{x}%
,\mathbf{v}\right)  Q_{N}\left(  0\right)  \left(  d\mathbf{x},d\mathbf{v}%
\right) \nonumber \\
 &+& \int_{0}^{t}\int_{\mathbb{R}^{d}\times\mathbb{R}^{d}}\mathbf{v}\cdot
\nabla_{x}\phi\left(  \mathbf{x},\mathbf{v}\right)  Q_{N}\left(
s\right) \left(  d\mathbf{x},d\mathbf{v}\right)  ds  \nonumber \\
&+& \int_{0}^{t}\int_{\mathbb{R}^{d}\times\mathbb{R}^{d}}\left[
f\left( |\nabla C_{N}\left(  s,\mathbf{x}\right)  |\right)  \nabla
C_{N}\left( s,\mathbf{x}\right)  -k_{1}\mathbf{v}\right]
\cdot\nabla_{v}\phi\left(
\mathbf{x},\mathbf{v}\right)  Q_{N}\left(  s\right)  \left(  d\mathbf{x}%
,d\mathbf{v}\right)  ds  \nonumber \\
&+& \int_{0}^{t}\int_{\mathbb{R}^{d}\times\mathbb{R}^{d}}\frac{\sigma^{2}}%
{2}\Delta_{v}\phi\left(  \mathbf{x},\mathbf{v}\right)  Q_{N}\left(  s\right)
\left(  d\mathbf{x},d\mathbf{v}\right)  ds
\nonumber \\
&+&
\int_{0}^{t}\int_{\mathbb{R}^{d}\times\mathbb{R}^{d}}\phi_{G}\left(
\mathbf{x}\right)  \alpha(C_{N}(s,\mathbf{x}))Q_{N}\left( s\right)
\left(
d\mathbf{x},d\mathbf{v}\right)  ds \nonumber \\
&+&
\int_{0}^{t}\int_{\mathbb{R}^{d}\times\mathbb{R}^{d}}\phi_{G}\left(
\mathbf{x}\right)  \beta(C_{N}(s,\mathbf{x}))\left\vert
\mathbf{v}\right\vert \int_{0}^{s}Q_{N}\left(  r\right)  \left(
d\mathbf{x},d\mathbf{v}\right)
drds \nonumber \\
&-&
\gamma\int_{0}^{t}\int_{\mathbb{R}^{d}\times\mathbb{R}^{d}}\phi\left(
\mathbf{x},\mathbf{v}\right)  g\left(  s,\mathbf{x},\left\{
Q_{N}\left( r\right)  \right\}  _{r\in\left[  0,s\right]  }\right)
Q_{N}\left(  s\right) \left(  d\mathbf{x},d\mathbf{v}\right)  ds
\nonumber \\ &+& \widetilde{M}_{N}\left(  t\right).
\end{eqnarray}

The martingale
\[
\widetilde{M}_{N}\left(  t\right)  =\widetilde{M}_{1,N}\left(  t\right)
+\widetilde{M}_{2,N}\left(  t\right)  +\widetilde{M}_{3,N}\left(  t\right)
\]
is the sum of three zero-mean martingales, namely%

\begin{equation}
\widetilde{M}_{1,N}\left(  t\right)  =\int_{0}^{t}\frac{1}{N}\sum_{i=1}%
^{N_{s}}\nabla_{v}\phi(X^{i,N}(s),V^{i,N}(s))I_{s\in\lbrack T^{i,N}%
,\Theta^{i,N})}\cdot dW^{i}(s); \label{martingale_1}%
\end{equation}

\begin{align}
\widetilde{M}_{2,N}\left(  t\right)   &  =\int_{0}^{t}\int_{\mathbb{R}%
^{d}\times\mathbb{R}^{d}}[\phi(\mathbf{x},\mathbf{v})\Phi_{N}(ds\times
d\mathbf{x}\times d\mathbf{v})\nonumber\label{martingale_2}\\
&  -\phi_{G}\left(  \mathbf{x}\right)  \alpha(C_{N}(s,\mathbf{x}))Q_{N}\left(
s\right)  \left(  d\mathbf{x},d\mathbf{v}\right)  ds\nonumber\\
&  -\phi_{G}\left(  \mathbf{x}\right)
\beta(C_{N}(s,\mathbf{x}))\left\vert \mathbf{v}\right\vert
\int_{0}^{s}Q_{N}\left(  r\right)  \left(
d\mathbf{x},d\mathbf{v}\right)  drds];
\end{align}

\begin{align}
\widetilde{M}_{3,N}\left(  t\right)   &  =\int_{0}^{t}\int_{\mathbb{R}%
^{d}\times\mathbb{R}^{d}}\phi(\mathbf{x},\mathbf{v})[\Psi_{N}(ds\times
d\mathbf{x}\times d\mathbf{v})\nonumber\label{martingale_3}\\
&  -\gamma g\left(  s,\mathbf{x},\left\{  Q_{N}\left(  r\right)  \right\}
_{r\in\left[  0,s\right]  }\right)  Q_{N}\left(  s\right)  \left(
d\mathbf{x},d\mathbf{v}\right)  ds].
\end{align}

In the above we have denoted
\begin{equation}
\phi_{G}\left(  \mathbf{x}\right)  :=\int_{\mathbb{R}^{d}}G_{{v}_{0}%
}(\mathbf{v})\phi\left(  \mathbf{x},\mathbf{v}\right)  d\mathbf{v}.
\label{G_V_0}%
\end{equation}

\subsection{Heuristic derivation of the limit PDE\label{subsect limit PDE}}

It is now clear that the only source of stochasticity in the above
system is in the martingale terms. Classical laws of large numbers
for martingales, allow us to conjecture that the martingales are
negligible. Consequently, if we assume we already know that the
sequences $(Q_{N})_{N\in\mathbb{N}}$ and
$(C_{N})_{N\in\mathbb{N}}$ converge, to a deterministic
time-dependent measure $p_{t}\left( d\mathbf{x},d\mathbf{v}\right)
$ and a deterministic function $C_{t}\left(  \mathbf{x}\right)  $
respectively,   the
limit PDE\ for the measure $p_{t}$ is conjectured to be%

\begin{eqnarray}\label{PDE weak form}
&& \int_{\mathbb{R}^{d}\times\mathbb{R}^{d}}\phi\left(
\mathbf{x},\mathbf{v}
\right)  p_{t}\left(  d\mathbf{x},d\mathbf{v}\right)  =\int_{\mathbb{R}%
^{d}\times\mathbb{R}^{d}}\phi\left(  \mathbf{x},\mathbf{v}\right)
p_{0}\left(  d\mathbf{x},d\mathbf{v}\right)  \nonumber \\
&+&
\int_{0}^{t}\int_{\mathbb{R}^{d}\times\mathbb{R}^{d}}\mathbf{v}\cdot
\nabla_{x}\phi\left(  \mathbf{x},\mathbf{v}\right)  p_{s}\left(
d\mathbf{x},d\mathbf{v}\right)  ds \nonumber \\
&+& \int_{0}^{t}\int_{\mathbb{R}^{d}\times\mathbb{R}^{d}}\left[
f\left( |\nabla C_{s}\left(  \mathbf{x}\right)  |\right)  \nabla
C_{s}\left( \mathbf{x}\right)  -k_{1}\mathbf{v}\right]
\cdot\nabla_{v}\phi\left( \mathbf{x},\mathbf{v}\right) p_{s}\left(
d\mathbf{x},d\mathbf{v}\right)
ds \nonumber \\
&+& \int_{0}^{t}\int_{\mathbb{R}^{d}\times\mathbb{R}^{d}}\frac{\sigma^{2}}%
{2}\Delta_{v}\phi\left(  \mathbf{x},\mathbf{v}\right)  p_{s}\left(
d\mathbf{x},d\mathbf{v}\right)  ds
\nonumber \\
&+&
\int_{0}^{t}\int_{\mathbb{R}^{d}\times\mathbb{R}^{d}}\phi_{G}\left(
\mathbf{x}\right)  \alpha(C_{s}\left(  \mathbf{x}\right)
)p_{s}\left(
d\mathbf{x},d\mathbf{v}\right)  ds \nonumber \\
&+&
\int_{0}^{t}\int_{0}^{s}\int_{\mathbb{R}^{d}\times\mathbb{R}^{d}}\phi
_{G}\left(  \mathbf{x}\right)  \beta(C_{s}\left( \mathbf{x}\right)
)\left\vert \mathbf{v}\right\vert p_{r}\left(
d\mathbf{x},d\mathbf{v}\right) drds  \nonumber \\
&-&
\gamma\int_{0}^{t}\int_{\mathbb{R}^{d}\times\mathbb{R}^{d}}\phi\left(
\mathbf{x},\mathbf{v}\right)  g\left(  s,\mathbf{x},\left\{
p_{r}\right\}
_{r\in\left[  0,s\right]  }\right)  p_{s}\left(  d\mathbf{x},d\mathbf{v}%
\right)  ds
\end{eqnarray}
with%
\begin{align*}
g\left(  s,\mathbf{x},\left\{  p_{r}\right\}  _{r\in\left[  0,s\right]
}\right)   &  =h\left(  \int_{0}^{s}\int_{\mathbb{R}^{d}\times\mathbb{R}^{d}%
}K_{2}\left(  \mathbf{x}-\mathbf{x}^{\prime}\right)  \left\vert \mathbf{v}%
^{\prime}\right\vert p_{r}\left(  d\mathbf{x}^{\prime},d\mathbf{v}^{\prime
}\right)  dr\right) \\
&  =h\left(  \int_{0}^{s}\int_{\mathbb{R}^{d}}K_{2}\left(  \mathbf{x}%
-\mathbf{x}^{\prime}\right)  \widetilde{p}_{r}\left(  d\mathbf{x}^{\prime
}\right)  dr\right)  ,
\end{align*}
having  set%
\[
\widetilde{p}_{r}\left(  d\mathbf{x}\right)  =\int_{\mathbb{R}^{d}}\left\vert
\mathbf{v}\right\vert p_{r}\left(  d\mathbf{x},d\mathbf{v}\right)  .
\]

Notice that%
\begin{align*}
&  \int_{\mathbb{R}^{d}\times\mathbb{R}^{d}}\phi_{G}\left(  \mathbf{x}\right)
\alpha(C_{s}\left(  \mathbf{x}\right)  )p_{s}\left(  d\mathbf{x}%
,d\mathbf{v}\right)  =\\
&  \qquad=\int_{\mathbb{R}^{d}\times\mathbb{R}^{d}\times\mathbb{R}^{d}}%
G_{{v}_{0}}(\mathbf{v}^{\prime})\phi\left(  \mathbf{x},\mathbf{v}^{\prime
}\right)  \alpha(C_{s}\left(  \mathbf{x}\right)  )p_{s}\left(  d\mathbf{x}%
,d\mathbf{v}\right)  d\mathbf{v}^{\prime}\\
&  \qquad=\int_{\mathbb{R}^{d}\times\mathbb{R}^{d}\times\mathbb{R}^{d}}%
G_{{v}_{0}}(\mathbf{v})\phi\left(  \mathbf{x},\mathbf{v}\right)  \alpha
(C_{s}\left(  \mathbf{x}\right)  )p_{s}\left(  d\mathbf{x},d\mathbf{v}%
^{\prime}\right)  d\mathbf{v}\\
&  \qquad=\int_{\mathbb{R}^{d}\times\mathbb{R}^{d}}G_{{v}_{0}}(\mathbf{v}%
)\phi\left(  \mathbf{x},\mathbf{v}\right)  \alpha(C_{s}\left(  \mathbf{x}%
\right)  )\left(  \int_{\mathbb{R}^{d}}p_{s}\left(  d\mathbf{x},d\mathbf{v}%
^{\prime}\right)  \right)  d\mathbf{v}\\
&  \qquad=\int_{\mathbb{R}^{d}\times\mathbb{R}^{d}}G_{{v}_{0}}(\mathbf{v}%
)\phi\left(  \mathbf{x},\mathbf{v}\right)  \alpha(C_{s}\left(  \mathbf{x}%
\right)  )\left(  \pi_{1}p_{s}\right)  \left(  d\mathbf{x}\right)  d\mathbf{v}%
\end{align*}
where we set%
\[
\left(  \pi_{1}p_{s}\right)  \left(  d\mathbf{x}\right)  =\int_{\mathbb{R}%
^{d}}p_{s}\left(  d\mathbf{x},d\mathbf{v}\right)
\]
and similarly%
\begin{align*}
&  \int_{0}^{s}\int_{\mathbb{R}^{d}\times\mathbb{R}^{d}}\phi_{G}\left(
\mathbf{x}\right)  \beta(C_{s}\left(  \mathbf{x}\right)  )\left\vert
\mathbf{v}\right\vert p_{r}\left(  d\mathbf{x},d\mathbf{v}\right)  dr\\
&  =\int_{0}^{s}\int_{\mathbb{R}^{d}\times\mathbb{R}^{d}\times\mathbb{R}^{d}%
}G_{{v}_{0}}(\mathbf{v}^{\prime})\phi\left(  \mathbf{x},\mathbf{v}^{\prime
}\right)  \beta(C_{s}\left(  \mathbf{x}\right)  )\left\vert \mathbf{v}%
\right\vert p_{r}\left(  d\mathbf{x},d\mathbf{v}\right)  drd\mathbf{v}%
^{\prime}\\
&  =\int_{0}^{s}\int_{\mathbb{R}^{d}\times\mathbb{R}^{d}\times\mathbb{R}^{d}%
}G_{{v}_{0}}(\mathbf{v})\phi\left(  \mathbf{x},\mathbf{v}\right)  \beta
(C_{s}\left(  \mathbf{x}\right)  )\left\vert \mathbf{v}^{\prime}\right\vert
p_{r}\left(  d\mathbf{x},d\mathbf{v}^{\prime}\right)  drd\mathbf{v}\\
&  =\int_{\mathbb{R}^{d}\times\mathbb{R}^{d}}G_{{v}_{0}}(\mathbf{v}%
)\phi\left(  \mathbf{x},\mathbf{v}\right)  \beta(C_{s}\left(  \mathbf{x}%
\right)  )\int_{0}^{s}\int_{\mathbb{R}^{d}}\left\vert \mathbf{v}^{\prime
}\right\vert p_{r}\left(  d\mathbf{x},d\mathbf{v}^{\prime}\right)
drd\mathbf{v}\\
&  =\int_{\mathbb{R}^{d}\times\mathbb{R}^{d}}G_{{v}_{0}}(\mathbf{v}%
)\phi\left(  \mathbf{x},\mathbf{v}\right)  \beta(C_{s}\left(  \mathbf{x}%
\right)  )\int_{0}^{s}\widetilde{p}_{r}\left(  d\mathbf{x}\right)
drd\mathbf{v}.
\end{align*}

Consequently, the limit PDE for $C_{t}\left(  \mathbf{x}\right)  $ is
conjectured to be%
\[
\frac{\partial}{\partial t}C_{t}\left(  \mathbf{x}\right)  =k_{2}\delta
_{A}\left(  \mathbf{x}\right)  +d_{1}\Delta C_{t}\left(  \mathbf{x}\right)
-\eta\left(  t,\mathbf{x},\left\{  p_{s}\right\}  _{s\in\left[  0,t\right]
}\right)  C_{t}\left(  \mathbf{x}\right)  ,
\]
where
\begin{equation}
\eta\left(  t,\mathbf{x},\left\{  p_{s}\right\}  _{s\in\left[  0,t\right]
}\right)  =\int_{0}^{t}\left(  \int_{\mathbb{R}^{d}\times\mathbb{R}^{d}}%
K_{1}\left(  \mathbf{x}-\mathbf{x}^{\prime}\right)  \left\vert \mathbf{v}%
^{\prime}\right\vert p_{s}\left(  d\mathbf{x}^{\prime},d\mathbf{v}^{\prime
}\right)  \right)  ds. \label{eta}%
\end{equation}

\section{Main results} \label{sec:main_results}

A rigorous proof of the above mentioned convergence of the evolution equations
for the empirical measure $Q_{N}(t)$ to the evolution equation of the
corresponding deterministic limit measure $p_{t}$ requires various steps,
including (i) tightness of the sequences of the laws of $(Q_{N})_{N\in
\mathbb{N}},$ and $(C_{N})_{N\in\mathbb{N}};$ (ii) existence and uniqueness of
the solution of the deterministic evolution equation of the limiting measure
$p_{t}$ (see e.g. \cite{capasso_bakstein} and references therein).

\subsection{Assumptions and notations}

Denote by $\mathcal{M}_{+}\left(
\mathbb{R}^{d}\times\mathbb{R}^{d}\right)  $ the space of positive
Radon measures and by $\mathcal{M}_{1}\left(
\mathbb{R}^{d}\times\mathbb{R}^{d}\right)  $ the space of those
$\rho\left( d\mathbf{x},d\mathbf{v}\right)  $ such that
$$\int_{\mathbb{R}^{2d}}\left(
1+\left\vert \mathbf{v}\right\vert \right)  \rho\left(  d\mathbf{x}%
,d\mathbf{v}\right)  <\infty.$$
Denote by $L^{\infty}\left(  0,T;\mathcal{M}%
_{1}\left(  \mathbb{R}^{d}\times\mathbb{R}^{d}\right)  \right)  $ the space of
time-dependent Radon measures $p_{t}\left(  d\mathbf{x},d\mathbf{v}\right)  $
such that $t\mapsto\int_{\mathbb{R}^{2d}}\phi\left(  \mathbf{x}%
,\mathbf{v}\right)  \left(  1+\left\vert \mathbf{v}\right\vert \right)
p_{t}\left(  d\mathbf{x},d\mathbf{v}\right)  $ is measurable for all $\phi\in
C_{c}^{\infty}\left(  \mathbb{R}^{d}\times\mathbb{R}^{d}\right)  $ and
$$\sup_{t\in\left[  0,T\right]  }\int_{\mathbb{R}^{2d}}\left(  1+\left\vert
\mathbf{v}\right\vert \right)  p_{t}\left(
d\mathbf{x},d\mathbf{v}\right) <\infty.$$

Denote by $C\left(  \left[  0,T\right]  ;\mathcal{M}_{+}\left(
\mathbb{R}^{d}\times\mathbb{R}^{d}\right)  \right)  $ the space of
time-dependent measures that are continuous when
$\mathcal{M}_{+}\left( \mathbb{R}^{d}\times\mathbb{R}^{d}\right)
$ is endowed of a metric corresponding to weak convergence of
measures.

For a positive integer $k$, we denote by $C_{b}^{k}\left(  \mathbb{R}%
^{d}\right)  $ the space of all functions on $\mathbb{R}^{d}$ which are
differentiable $k$ times with bounded derivatives up to order $k$. We denote
by $UC_{b}^{k}\left(  \mathbb{R}^{d}\right)  $ the subspace of $C_{b}%
^{k}\left(  \mathbb{R}^{d}\right)  $ of functions which are uniformly
continuous, with their derivatives up to order $k$.

We assume that the initial conditions $\left(  \mathbf{X}_{0}^{N}%
,\mathbf{V}_{0}^{N}\right)  $ are i.i.d. random vectors, with a compact
support law $p_{0}\in\mathcal{M}_{1}\left(  \mathbb{R}^{d}\times\mathbb{R}%
^{d}\right)  $. Recall the definition of the empirical measure $Q_{N}\left(
t\right)  $ given in (\ref{empirical meas}). From the previous assumption on
the initial conditions $\left(  \mathbf{X}_{0}^{N},\mathbf{V}_{0}^{N}\right)
$ we deduce that $Q_{N}\left(  0\right)  $ converges to $p_{0}$ in the
following sense:%

\begin{align*}
&  \int_{\mathbb{R}^{d}\times\mathbb{R}^{d}}\phi\left(  \mathbf{x}%
,\mathbf{v}\right)  \left(  1+\left\vert \mathbf{v}\right\vert \right)
Q_{N}\left(  0\right)  \left(  d\mathbf{x},d\mathbf{v}\right)  \rightarrow\\
&  \qquad\qquad\rightarrow\int_{\mathbb{R}^{d}\times\mathbb{R}^{d}}\phi\left(
\mathbf{x},\mathbf{v}\right)  \left(  1+\left\vert \mathbf{v}\right\vert
\right)  p_{0}\left(  d\mathbf{x},d\mathbf{v}\right)
\end{align*}
for every $\phi\in C_{b}\left(  \mathbb{R}^{d}\times\mathbb{R}^{d}\right)  $,
where the convergence is understood in probability.

The initial condition $C_{0}$ of the concentration is independent of $N$, just
for simplicity. We assume it of class $UC_{b}^{2}\left(  \mathbb{R}%
^{d}\right)  $. Moreover we assume%
\[
0\leq C_{0}\left(  \mathbf{x}\right)  \leq C_{\max},
\]
for some constant $C_{\max}>0$. The convolution kernel $K_{1}$ appearing in
the TAF absorption rate $\eta$ and the convolution kernel $K_{2}$ of the
anastomosis, are both assumed of class $UC_{b}^{1}\left(  \mathbb{R}%
^{d}\right)  $ and nonnegative%
\[
K_{1}\left(  \mathbf{x}\right)  \geq0,\qquad K_{2}\left(  \mathbf{x}\right)
\geq0.
\]
for all $x\in\mathbb{R}^{d}$. The function $G_{v_{0}}\left(  \mathbf{v}%
\right)  $ appearing in the vessel branching rate is assumed of class
$UC_{b}^{1}\left(  \mathbb{R}^{d}\right)  $, with compact support, non
negative, and such that
\[
\int_{\mathbb{R}^{d}}G_{v_{0}}\left(  \mathbf{v}\right)  \left(  1+\left\vert
\mathbf{v}\right\vert \right)  d\mathbf{v}<\infty.
\]
For $\delta_{A}$ we take its mollified version, so that we may assume that it
is a classical function of class $UC_{b}^{1}\left(  \mathbb{R}^{d}\right)  .$

Several constants appear in the model; we assume:
\[
k_{1},k_{2},d_{1},d_{2},\gamma_{1},\alpha_{1},C_{R}>0.
\]

\subsection{Theorems of convergence and well posedness of the limit PDE
system}

Under these assumptions, we prove our main result.

\begin{theorem}
\label{thm convergence} As $N\rightarrow\infty,$ $Q_{N}\left(  t\right)  $
converges in probability, in the topology $C\left(  \left[  0,T\right]
;\mathcal{M}_{+}\left(  \mathbb{R}^{d}\times\mathbb{R}^{d}\right)  \right)  $,
to a time dependent deterministic Radon measure $p_{t}$ on $\mathbb{R}%
^{d}\times\mathbb{R}^{d}$, also of class $L^{\infty}\left(  0,T;\mathcal{M}%
_{1}\left(  \mathbb{R}^{d}\times\mathbb{R}^{d}\right)  \right)  $ and $C_{N}$
converges in $C\left(  \left[  0,T\right]  ;C_{b}^{1}\left(  \mathbb{R}%
^{d}\right)  \right)  $ to a deterministic function $C$ of this space. The
measure $p_{t}$ is a weak solution (unique in $L^{\infty}\left(
0,T;\mathcal{M}_{1}\left(  \mathbb{R}^{d}\times\mathbb{R}^{d}\right)  \right)
$ as specified in Theorem (\ref{thm uniqueness}) below) of the equation%
\begin{align}
&  \partial_{t}p_{t}+\mathbf{v}\cdot\nabla_{x}p_{t}+\operatorname{div}%
_{v}\left(  \left[  f\left(  |\nabla C_{t}|\right)  \nabla C_{t}%
-k_{1}\mathbf{v}\right]  p_{t}\right)  =\nonumber\\
&  \qquad=\frac{\sigma^{2}}{2}\Delta_{v}p_{t}\nonumber\\
&  \qquad\qquad+G_{{v}_{0}}\left(  \mathbf{v}\right)  d\mathbf{v}\left(
\alpha(C_{t})\left(  \pi_{1}p_{t}\right)  \left(  d\mathbf{x}\right)
+\beta(C_{t})\int_{0}^{t}\widetilde{p}_{r}\left(  d\mathbf{x}\right)
dr\right) \nonumber\\
&  \qquad\qquad-\gamma p_{t}h\left(  \int_{0}^{t}\left(  K_{2}\ast
\widetilde{p}_{r}\right)  \left(  \mathbf{x}\right)  dr\right)  .
\label{PDE for p version 0}%
\end{align}
The function $C_{t}$ is a mild solution of the equation
\begin{equation}
\partial_{t}C_{t}\left(  \mathbf{x}\right)  =k_{2}\delta_{A}\left(
\mathbf{x}\right)  +d_{1}\Delta C_{t}\left(  \mathbf{x}\right)  -\eta\left(
t,\mathbf{x},\left\{  p_{s}\right\}  _{s\in\left[  0,t\right]  }\right)
C_{t}\left(  \mathbf{x}\right)  , \label{PDE for C version 0}%
\end{equation}
where $\eta$ is given in Equation (\ref{eta}).
\end{theorem}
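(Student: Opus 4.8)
The plan is to run the classical scheme for propagation-of-chaos / McKean--Vlasov limits --- uniform a priori estimates, tightness, identification of the limit through the weak formulation --- adapted to the non-standard feature that the system size $N_t$ is itself random and has to be controlled before anything else. \textbf{Step 1 (a priori estimates).} For each fixed $N$, equation (\ref{eq CN}) is \emph{linear} parabolic in $C_N$ with nonnegative source $k_2\delta_A$ and nonnegative zeroth-order coefficient $\eta\ge 0$, so the maximum principle gives $0\le C_N(t,\mathbf x)\le C_{\max}$. Writing $C_N$ in mild form, using that $\eta$ and $\nabla_{\mathbf x}\eta$ are bounded by $\|K_1\|_{C_b^1}\int_0^t\int(1+|\mathbf v|)\,Q_N(s)(d\mathbf x,d\mathbf v)\,ds$, and the smoothing estimate $\|\nabla e^{\tau d_1\Delta}g\|_\infty\le c\,\tau^{-1/2}\|g\|_\infty$, one bounds $\|C_N\|_{C([0,T];C_b^1)}$ in terms of $C_{\max}$, $\|\delta_A\|_{C_b^1}$, $\|C_0\|_{C_b^2}$ and $\sup_{s\le T}\int(1+|\mathbf v|)\,Q_N(s)$. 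It thus remains to bound, uniformly in $N$, $m_N(t)=\tfrac1N\mathbb E[N_t]$ and $\mu_N(t)=\mathbb E\int(1+|\mathbf v|)\,Q_N(t)$. After localization at $\tau_K=\inf\{t:N_t\ge K\}$ one combines: (i) the pure-jump structure of $N_t$ (branchings minus anastomoses), whose branching compensator has total mass $\alpha(C_N)\sum_i\epsilon_{\mathbf X^{i,N}(s)}+\beta(C_N)\epsilon_{\mathbf X^N(s)}$ with $\alpha,\beta$ bounded; (ii) the representation (\ref{stoch_network}) of the vessel length $\mathcal H^1(\mathbf X^N(s))=\int_0^s\sum_i|\mathbf V^{i,N}(r)|\,\mathbb I\,dr$; and (iii) the fact that, since $f(|\nabla C_N|)\nabla C_N$ is bounded for $q\ge 1$ and $-k_1\mathbf v$ is dissipative, each $\mathbf V^{i,N}$ is an Ornstein--Uhlenbeck-type process whose conditional moments given $\mathcal F_{T^{i,N}}$ stay controlled by those of the (compactly supported) birth law $G_{v_0}$. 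This closes $(m_N,\mu_N)$ into a two-dimensional linear Volterra integral inequality --- with a \emph{double} time-integral coming from vessel branching, the anastomosis term being discarded because $h\ge 0$ --- and Gronwall yields $\sup_{t\le T}(m_N(t)+\mu_N(t))\le C_T$ independently of $N$; the higher weighted moments $\sup_{t\le T}\mathbb E\int(1+|\mathbf v|^{k})\,Q_N(t)\le C_{T,k}$ needed later follow by the same mechanism.

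\textbf{This is the step I expect to be the main obstacle.} The double time-integral --- a new tip may branch from \emph{any} point of an already-extended vessel whose length keeps growing under the Gaussian fluctuations of $\sigma\,d\mathbf W^i$ --- destroys the naive domination of $N_t$ by a Yule process; one is forced to close the feedback loop between the tip count $N_t$ and the vessel length $\tfrac1N(K_1\ast\delta_{\mathbf X^N})$, and the accompanying cutoff/conditioning arguments are delicate (the natural test functions $\phi\equiv 1$ and $\phi(\mathbf x,\mathbf v)=|\mathbf v|$ are not admissible in (\ref{Ito_Levy}), and one must work with smooth truncations and localization in $N_t$). The non-extinction of the tip process on $[0,T]$, needed so that the limit is non-degenerate, is the Appendix result (see \ref{B}).

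\textbf{Step 2 (tightness).} For $C_N$, the uniform bound in $C([0,T];C_b^1)$ plus time-equicontinuity read off the mild formula and the bound on $\eta$ give, by the Arzel\`{a}--Ascoli theorem (the spatial decay coming from the heat semigroup acting on bounded, uniformly continuous data), tightness of the laws of $C_N$ in $C([0,T];C_b^1(\mathbb{R}^d))$. For $Q_N$, from (\ref{Ito_Levy}): against a fixed $\phi\in C_c^\infty$ the finite-variation part has total variation bounded by $c(\|\phi\|_{C^2})(1+T)^2\sup_s(m_N(s)+\mu_N(s))\le C_T'$, while the martingale $\widetilde M_N$ has predictable bracket of order $1/N$ in expectation (jumps of size $1/N$ with intensity $O(N)$, the Brownian part carrying a $1/N^2$ prefactor summed over $O(N)$ tips), both estimates using only the first-moment bound of Step 1. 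Hence each $t\mapsto\int\phi\,dQ_N(t)$ is tight in $C([0,T];\mathbb R)$ for $\phi$ in a countable separating family; the weighted moment bounds (weight $1+|\mathbf v|$, and $|\mathbf X^{i,N}(t)|\le|\mathbf X^{i,N}(T^{i,N})|+\int|\mathbf V^{i,N}|$ for the $\mathbf x$-variable) give the compact-containment condition; and, since the jumps of $Q_N$ are of size $1/N$, Jakubowski's criterion yields tightness of the laws of $Q_N$ in $D([0,T];\mathcal M_+(\mathbb{R}^d\times\mathbb{R}^d))$ with all limit points supported on $C([0,T];\mathcal M_+)$. By Prokhorov we extract a subsequence along which $(Q_N,C_N)$ converges in law, and by the Skorokhod representation we may assume a.s. convergence of realizations.

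\textbf{Step 3 (identification and conclusion).} Pass to the limit in each term of (\ref{Ito_Levy}) along the subsequence. The martingale vanishes in $L^2$ by the $1/N$ bracket bound. The transport and $\tfrac{\sigma^2}{2}\Delta_v$ terms pass by weak convergence of measures (the factor $|\mathbf v|$ is harmless there since $\phi\in C_c^\infty$ is compactly supported in $\mathbf v$). The terms with $\alpha(C_N)$, $\beta(C_N)$, $f(|\nabla C_N|)\nabla C_N$ and $g(s,\cdot,\{Q_N(r)\})$ are handled by the \emph{joint} convergence $(Q_N,C_N)\to(p,C)$: $C\mapsto\alpha(C),\beta(C)$ are bounded and Lipschitz, $\mathbf x\mapsto f(|\nabla C_t(\mathbf x)|)\nabla C_t(\mathbf x)$ converges uniformly on compacts because $\nabla C_N\to\nabla C$ in $C([0,T];C_b^1)$, and $g$ is the bounded Lipschitz $h$ composed with a functional of $\{Q_N(r)\}$ continuous for weak convergence uniformly in $\mathbf x$; the two terms in which $|\mathbf v|$ is integrated against $Q_N$ \emph{without} a cutting-off factor (vessel branching and $g$) additionally require uniform integrability of $|\mathbf v|$ under $Q_N$, supplied by a truncation together with the higher-moment bounds of Step 1. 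The same reasoning gives $\eta(t,\cdot,\{Q_N(s)\})\to\eta(t,\cdot,\{p_s\})$ and, plugging into the mild form of (\ref{eq CN}), shows $C$ is the mild solution of (\ref{PDE for C version 0}). Hence any subsequential limit $(p,C)$ lies in $L^\infty(0,T;\mathcal M_1)\times C([0,T];C_b^1)$ and solves (\ref{PDE for p version 0})--(\ref{PDE for C version 0}) in the stated weak/mild sense. Finally, the uniqueness of such a solution --- Theorem~\ref{thm uniqueness}, obtained by a Gronwall estimate in a dual/weak norm on the difference of two solutions of the coupled system, using the Lipschitz dependence of $\alpha,\beta,h$ and of the memory terms $\int_0^t\widetilde p_r\,dr$ on $(p,C)$ --- forces every subsequential limit to be the same deterministic pair. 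Convergence in law of the whole sequence to a deterministic limit is therefore convergence in probability, in $C([0,T];\mathcal M_+(\mathbb{R}^d\times\mathbb{R}^d))\times C([0,T];C_b^1(\mathbb{R}^d))$, which is the assertion.
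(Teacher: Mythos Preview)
Your three-step scheme (a priori bounds $\to$ tightness $\to$ identification via uniqueness) is exactly the architecture of the paper's proof, and you have correctly located the main obstacle: controlling $N_t/N$ when tips may branch from vessels of potentially unbounded length, so that naive Yule domination fails.

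The substantive difference lies in how Step~1 is executed. You propose to close a linear Volterra--Gronwall inequality for the pair $\bigl(m_N(t),\mu_N(t)\bigr)=\bigl(\mathbb E[N_t/N],\,\mathbb E\!\int(1+|\mathbf v|)\,Q_N(t)\bigr)$ by testing (\ref{Ito_Levy}) with $\phi\equiv 1$ and $\phi\approx 1+|\mathbf v|$. This yields $\sup_{t}\mathbb E[\cdot]$ bounds. The paper instead builds a \emph{pathwise} dominating process: it removes anastomosis, bounds the per-particle branching rates $\lambda^{i,1},\lambda^{i,2}$ from above by random variables $Z^i$ depending only on $\sup_s\bigl|\int_0^s e^{k_1r}\,d\mathbf W^i(r)\bigr|$ (via Lemma~\ref{lemma estimate V}), observes that the resulting Cox-type process decomposes into $N$ i.i.d.\ copies, and applies Wald's identity to obtain the stronger bound $\mathbb E\bigl[\sup_{t\le T} N_t/N\bigr]\le e^{\lambda T}$.

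This distinction matters for your Step~2. Aldous' criterion involves random stopping times $\tau$, and the paper's estimates in Steps~3--4 of Proposition~\ref{Propos tight} bound $\int_\tau^{\tau+\theta}(\cdots)\,ds$ by $\theta\cdot\sup_s(\cdots)$ \emph{inside} the expectation; one then needs $\mathbb E[\sup_s N_s/N]$ and, more delicately, $\mathbb E\bigl[\tfrac1N\sum_{i\le \sup_s N_s}\sup_r|\int_0^r e^{k_1u}\,d\mathbf W^i|\bigr]$. Your Gronwall route gives only $\sup_s\mathbb E[\cdots]$. You can recover $\mathbb E[\sup_s N_s/N]$ by monotonicity of the birth-only process (so $\sup_s N_s=N_T^+$), but the velocity-weighted term $\mathbb E[\sup_s\int|\mathbf v|\,Q_N(s)]$ does not reduce this way, and it is precisely here that the paper invokes the coupling together with Wald's identity (note the nontrivial check that $\overline T_{n+1}$ is independent of $Z^{n+1}$). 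Your proposal does not indicate how to bridge this gap; without it the Aldous estimate for the $|\mathbf v|$-terms in Step~2 is incomplete.

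In short: the plan and the diagnosis are right and close to the paper, but the mechanism you sketch for Step~1 is weaker than what the tightness argument actually consumes; the paper's pathwise coupling plus Wald is what upgrades fixed-time moment control to the $\mathbb E[\sup_t\,\cdot\,]$ control needed downstream.
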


The notion of weak solution of equation (\ref{PDE for p version 0}) is
identity (\ref{PDE weak form}). The notion of mild solution of Equation
(\ref{PDE for C version 0}) is explained in Section \ref{sct uniqueness} (see
also Section \ref{sect assumptions}).

As anticipated above, the proof of Theorem \ref{thm convergence} is based on
several arguments including a uniqueness result for the system of PDEs
(\ref{PDE for p version 0})-(\ref{PDE for C version 0}), which we state
separately because of its independent interest.

\begin{theorem}
\label{thm uniqueness} There exists a unique solution of System
(\ref{PDE for p version 0})-(\ref{PDE for C version 0}), with $p\in L^{\infty
}\left(  0,T;\mathcal{M}_{1}\left(  \mathbb{R}^{d}\times\mathbb{R}^{d}\right)
\right)  $ and $C\in C\left(  \left[  0,T\right]  ;C_{b}^{1}\left(
\mathbb{R}^{d}\right)  \right)  $.
\end{theorem}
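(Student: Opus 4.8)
The plan is to prove uniqueness by a Gronwall argument on a suitable distance between two solution pairs $(p^{1},C^{1})$ and $(p^{2},C^{2})$, exploiting the skew structure of the coupling: the transport--diffusion equation for $p_{t}$ has coefficients that depend on $C_{t}$ only through $\nabla C_{t}$ (and through the bounded saturated terms $\alpha,\beta,h$), while the equation for $C_{t}$ depends on $p$ only through the nonnegative, mollified, linear functional $\eta$. First I would set up the two natural metrics: on the measure component, a dual (Wasserstein-type or bounded-Lipschitz) distance
\[
d\bigl(p_{t}^{1},p_{t}^{2}\bigr)=\sup\Bigl\{\,\textstyle\int_{\mathbb{R}^{2d}}\phi\,(1+|\mathbf{v}|)\,(p_{t}^{1}-p_{t}^{2})(d\mathbf{x},d\mathbf{v})\ :\ \|\phi\|_{\mathrm{Lip}}\le 1\,\Bigr\},
\]
weighted by $(1+|\mathbf{v}|)$ to match the space $\mathcal{M}_{1}$, and on the field component the norm of $C\left([0,T];C_{b}^{1}(\mathbb{R}^{d})\right)$. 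The goal is the closed estimate
\[
\Xi(t):=\sup_{s\le t}d\bigl(p_{s}^{1},p_{s}^{2}\bigr)+\sup_{s\le t}\|C_{s}^{1}-C_{s}^{2}\|_{C_{b}^{1}}\ \le\ L\int_{0}^{t}\Xi(s)\,ds,
\]
from which $\Xi\equiv 0$ follows by Gronwall.

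The two halves of the estimate would be carried out as follows. For the $C$-equation, using the mild (semigroup) formulation $C_{t}=e^{td_{1}\Delta}C_{0}+\int_{0}^{t}e^{(t-s)d_{1}\Delta}\bigl(k_{2}\delta_{A}-\eta(s,\cdot,p)C_{s}\bigr)ds$, I would first establish a priori bounds $0\le C_{t}^{j}\le C_{\max}$ (from the positivity-preserving heat semigroup and nonnegativity of $\eta$, since $K_{1}\ge 0$) and a uniform $C_{b}^{1}$ bound, then use smoothing of the heat semigroup in the $C_{b}^{1}$-norm together with the fact that $\eta(t,\cdot,p^{1})-\eta(t,\cdot,p^{2})$ is controlled by $\int_{0}^{t}d(p^{1}_{s},p^{2}_{s})\,ds$ (because $K_{1}\in UC_{b}^{1}$ and the test function $K_{1}(\mathbf{x}-\cdot)|\mathbf{v}'|$ is Lipschitz with the required $(1+|\mathbf{v}'|)$-growth). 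This yields $\|C_{t}^{1}-C_{t}^{2}\|_{C_{b}^{1}}\lesssim\int_{0}^{t}\bigl(\|C_{s}^{1}-C_{s}^{2}\|_{C_{b}^{1}}+\int_{0}^{s}d(p_{r}^{1},p_{r}^{2})dr\bigr)ds$. For the $p$-equation I would use the weak formulation (\ref{PDE weak form}) tested against $\phi$ flowing along the characteristic ODE system $\dot{\mathbf{X}}=\mathbf{V}$, $\dot{\mathbf{V}}=f(|\nabla C^{1}|)\nabla C^{1}-k_{1}\mathbf{V}+\text{(noise generator)}$ — more precisely, I would compare the two linear evolutions via a coupling/duality identity, estimating the difference of drifts by $\|f(|\nabla C^{1}|)\nabla C^{1}-f(|\nabla C^{2}|)\nabla C^{2}\|_{\infty}\lesssim\|\nabla C^{1}-\nabla C^{2}\|_{\infty}$ (since $r\mapsto f(|r|)r$ is globally Lipschitz by the choice of $f$), and the differences of the branching/anastomosis terms using boundedness and Lipschitz continuity of $\alpha,\beta,h$, the bound $\|G_{v_{0}}\|$ in the relevant norm, and again $K_{2}\in UC_{b}^{1}$, $K_{2}\ge 0$. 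Crucially the $(1+|\mathbf{v}|)$-weight is needed to absorb the $|\mathbf{v}|$-factors appearing in the vessel-branching and anastomosis terms; a preliminary lemma giving $\sup_{t\le T}\int(1+|\mathbf{v}|)p_{t}^{j}(d\mathbf{x},d\mathbf{v})<\infty$ (from the assumed membership in $L^{\infty}(0,T;\mathcal{M}_{1})$, which is part of the hypothesis) is what makes these terms finite and estimable.

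The main obstacle I anticipate is handling the memory (history-dependent) terms — the anastomosis factor $h\bigl(\int_{0}^{t}(K_{2}\ast\widetilde{p}_{r})dr\bigr)$ and the vessel-branching term $\int_{0}^{t}\widetilde{p}_{r}\,dr$ — which make the system non-Markovian in time, so that the Gronwall argument must be run on the \emph{running supremum} $\Xi(t)$ rather than on an instantaneous quantity, and one must be careful that the double time integrals still close. A second, more technical, difficulty is the unbounded velocity variable: the generator of the $\mathbf{V}$-dynamics contains the linear drift $-k_{1}\mathbf{v}$ and, in the dual formulation, one needs test functions whose $\mathbf{v}$-growth is compatible with the $\mathcal{M}_{1}$ weight, so I would either work with the explicitly $(1+|\mathbf{v}|)$-weighted dual norm throughout or introduce a cutoff in $\mathbf{v}$ and control the tail using the moment bound and the dissipativity of $-k_{1}\mathbf{v}$. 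Once these two points are dealt with, the existence half of the theorem is already supplied by the tightness/subsequence argument referenced from Theorem \ref{thm convergence}, so combining that with the uniqueness estimate above completes the proof.
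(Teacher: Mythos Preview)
Your overall architecture---Gronwall on a combined distance, mild formulation for $C$, handling the memory terms by passing to the running supremum, and obtaining existence from the tightness argument---matches the paper exactly. Two points, however, deserve attention.

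First, the paper does \emph{not} use a Lipschitz-dual (Wasserstein-type) distance on the measure component. It takes
\[
d(\mu',\mu'')=\sup_{\|\phi\|_\infty\le 1}\bigl|\langle \mu'-\mu'',(1+|\mathbf{v}|)\phi\rangle\bigr|
\]
over \emph{bounded measurable} $\phi$, i.e.\ a weighted total-variation distance, and then writes the Duhamel identity
\[
\langle p_t,\phi\rangle=\langle p_0,e^{tL^*}\phi\rangle+\int_0^t\bigl\langle \Lambda_s,e^{(t-s)L^*}\phi\bigr\rangle ds+\int_0^t\bigl\langle p_s,F(C_s)\cdot\nabla_v e^{(t-s)L^*}\phi\bigr\rangle ds,
\]
where $L^*=\tfrac{\sigma^2}{2}\Delta_v+\mathbf v\cdot\nabla_x+k_1\mathbf v\cdot\nabla_v$ is the linear kinetic generator. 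The key technical lemma is a smoothing estimate for this semigroup,
\[
\bigl|\nabla_v e^{tL^*}\bigl((1+|\mathbf v|)\phi\bigr)(\mathbf x,\mathbf v)\bigr|\le \frac{C}{\sqrt t}\,\|\phi\|_\infty(1+|\mathbf v|)+C\|\phi\|_\infty,
\]
proved via the explicit probabilistic representation of the Langevin semigroup (a Bismut--Elworthy-type identity). This lets the paper estimate the dangerous term $\langle p'_s-p''_s,\,F(C'_s)\cdot\nabla_v e^{(t-s)L^*}\widetilde\phi\rangle$ using only the \emph{boundedness} of $F(C'_s)$, paying a $1/\sqrt{t-s}$ singularity that is still integrable; a generalized Gronwall inequality then closes the argument. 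Your ``characteristics'' formulation is imprecise here because the equation is second order in $\mathbf v$; what is actually needed is this dual semigroup and its $\nabla_v$-smoothing.

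Second, and this is why the choice of distance matters: your Lipschitz-dual route would require, when splitting $\langle p^1_s-p^2_s,F(C^1_s)\cdot\nabla_v\phi\rangle$, that $F(C^1_s)\cdot\nabla_v\phi$ again be an admissible (Lipschitz) test function. But the theorem only places $C$ in $C([0,T];C^1_b)$, so $\nabla C$ is merely bounded and $F(C^1_s)$ is not Lipschitz in $\mathbf x$; your estimate does not close in the stated class without an extra bootstrap (showing any solution in fact has $C\in C^2_b$, which is true but is an additional step you did not include). The paper's total-variation-plus-smoothing approach sidesteps this entirely. A minor related point: because of the $1/\sqrt{t-s}$ kernels coming both from the heat semigroup in the $C$-estimate and from the kinetic smoothing above, the final inequality is not $\Xi(t)\le L\int_0^t\Xi(s)\,ds$ but a singular-kernel version, handled by a generalized Gronwall lemma.
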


The proof of Theorem \ref{thm convergence} is given in Section
\ref{sct proof thm convergence}. The proof of Theorem \ref{thm uniqueness} is
given in Section \ref{sct uniqueness}.

\begin{remark}
\label{density} In the framework of this paper the PDE for the measure $p_{t}$
will be always interpreted as a PDE for measure-valued functions. However,
under suitable assumptions, the relevant measure may admit a density $\rho
_{t}(x,v)$ which is a classical function, so that the PDE can be interpreted
in a more classical sense (we do not investigate rigorously this issue here,
we only give the heuristic result). The formal expression for the evolution
equation of the density $\rho_{t}(x,v)$ would then be
\begin{align}
&  \partial_{t}\rho_{t}\left(  \mathbf{x},\mathbf{v}\right)  +\mathbf{v}%
\cdot\nabla_{x}\rho_{t}\left(  \mathbf{x},\mathbf{v}\right)
+\operatorname{div}_{v}\left(  \left[  f\left(  |\nabla C_{t}\left(
\mathbf{x}\right)  |\right)  \nabla C_{t}\left(  \mathbf{x}\right)
-k_{1}\mathbf{v}\right]  \rho_{t}\left(  \mathbf{x},\mathbf{v}\right)
\right)  \nonumber\label{density_equation}\\
&  =\frac{\sigma^{2}}{2}\Delta_{v}\rho_{t}\left(  \mathbf{x},\mathbf{v}%
\right)  +G_{{v}_{0}}\left(  \mathbf{v}\right)  \left(  \alpha(C_{t}\left(
\mathbf{x}\right)  )\left(  \pi_{1}\rho_{t}\right)  \left(  \mathbf{x}\right)
+\beta(C_{t}\left(  \mathbf{x}\right)  )\int_{0}^{t}\widetilde{\rho}%
_{r}\left(  \mathbf{x}\right)  dr\right)  \nonumber\\
&  -\gamma\rho_{t} h\left(\int_{0}^{t} \left(
K_{2}\ast\widetilde{\rho}_{r}\right) \left(  \mathbf{x}\right) dr
\right ).
\end{align}
Here we have taken
\[
\left(  \pi_{1}\rho_{t}\right)  \left(  \mathbf{x}\right)  =\int%
_{\mathbb{R}^{d}}\rho_{t}\left(  \mathbf{x},\mathbf{v}\right)  d\mathbf{v},
\]
and
\[
\widetilde{\rho}_{r}\left(  \mathbf{x}\right)  =\int_{\mathbb{R}^{d}%
}|\mathbf{v}|\rho_{r}\left(  \mathbf{x},\mathbf{v}\right)  d\mathbf{v.}%
\]
\end{remark}

\section{Proof of Theorem \ref{thm convergence}%
\label{sct proof thm convergence}}

Let us explain the steps of the proof. First, we prove bounds, uniform in $N$,
on the particle system (\ref{vessel_extension}) and the PDE (\ref{eq CN}).
This is the core of the method. From these bounds we deduce tightness of the
sequence of laws of $Q_{N}$, $N\in\mathbb{N}$ and $C_{N}$, $N\in\mathbb{N}$,
and therefore the existence of a weakly convergent subsequence. Then we show
that the limit of this subsequence is concentrated on solutions of the limit
system (\ref{PDE for p version 0})-(\ref{PDE for C version 0}). This provides,
in particular, the existence claim of Theorem \ref{thm uniqueness}. From the
uniqueness claim of that theorem, proved in Section \ref{sct uniqueness}
below, we deduce that the whole sequence $\left(  Q_{N},C_{N}\right)  $,
$N\in\mathbb{N},$ converges weakly; and converges also in probability because
the limit is deterministic (again due to uniqueness).

\subsection{Regularity of $\eta$ and $C_{N}$\label{sect assumptions}}

We interpret equation (\ref{eq CN}) for $C_{N}$ in the mild semigroup form%
\[
C_{N}\left(  t\right)  =e^{tA}C_{0}+\int_{0}^{t}e^{\left(  t-s\right)
A}\left(  k_{2}\delta_{A}-\eta_{N}\left(  s,\cdot,\left\{  Q_{N}\left(
r\right)  \right\}  _{r\in\left[  0,s\right]  }\right)  C_{N}\left(  s\right)
\right)  ds.
\]
Here $e^{tA}$ denotes the heat semigroup associated to the operator
$A:=d_{1}\Delta$ on $\mathbb{R}^{d};$ $A:UC_{b}^{2}\left(  \mathbb{R}%
^{d}\right)  \subset UC_{b}^{0}\left(  \mathbb{R}^{d}\right)  \rightarrow
UC_{b}^{0}\left(  \mathbb{R}^{d}\right)  $.

\begin{lemma}
\label{lemma on eta}Given a measure $\mu\in L^{\infty}\left(  0,T;\mathcal{M}%
_{1}\left(  \mathbb{R}^{d}\times\mathbb{R}^{d}\right)  \right)  $ the
function
\[
{\small {\eta\left(  t,\mathbf{x}\right)  :=\eta\left(  t,\mathbf{x},\left\{
\mu_{s}\right\}  _{s\in\left[  0,t\right]  }\right)  =\int_{0}^{t}\left(
\int_{\mathbb{R}^{d}\times\mathbb{R}^{d}}K_{1}\left(  \mathbf{x}%
-\mathbf{x}^{\prime}\right)  \left\vert \mathbf{v}^{\prime}\right\vert \mu
_{s}\left(  d\mathbf{x}^{\prime},d\mathbf{v}^{\prime}\right)  \right)  ds}}%
\]
is of class $C\left(  \left[  0,T\right]  ;UC_{b}^{1}\left(  \mathbb{R}%
^{d}\right)  \right)  $.
\end{lemma}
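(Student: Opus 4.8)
The plan is to factor the statement through the \emph{time-frozen} convolution. For $s\in[0,T]$ put $\widetilde{\mu}_s(d\mathbf{x}):=\int_{\mathbb{R}^d}|\mathbf{v}|\,\mu_s(d\mathbf{x},d\mathbf{v})$, a finite nonnegative Radon measure on $\mathbb{R}^d$ with total mass bounded by $M:=\sup_{s\in[0,T]}\int_{\mathbb{R}^{2d}}(1+|\mathbf{v}|)\,\mu_s(d\mathbf{x},d\mathbf{v})<\infty$, and set $g_s(\mathbf{x}):=(K_1\ast\widetilde{\mu}_s)(\mathbf{x})=\int_{\mathbb{R}^{2d}}K_1(\mathbf{x}-\mathbf{x}')|\mathbf{v}'|\,\mu_s(d\mathbf{x}',d\mathbf{v}')$, so that $\eta(t,\mathbf{x})=\int_0^t g_s(\mathbf{x})\,ds$. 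Since $K_1\in UC_b^1(\mathbb{R}^d)$ one has immediately $\|g_s\|_\infty\le\|K_1\|_\infty M$; differentiating under the integral sign, which is legitimate by dominated convergence applied to the difference quotients (using that $\nabla K_1$ is bounded and $\widetilde{\mu}_s$ finite), gives $\nabla g_s(\mathbf{x})=\int_{\mathbb{R}^d}\nabla K_1(\mathbf{x}-\mathbf{x}')\,\widetilde{\mu}_s(d\mathbf{x}')$, hence $\|\nabla g_s\|_\infty\le\|\nabla K_1\|_\infty M$; and, writing $\omega$ for the modulus of continuity of $\nabla K_1$, $|\nabla g_s(\mathbf{x})-\nabla g_s(\mathbf{y})|\le M\,\omega(|\mathbf{x}-\mathbf{y}|)$. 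Thus $g_s\in UC_b^1(\mathbb{R}^d)$ with all bounds uniform in $s\in[0,T]$.

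Before integrating in $s$ one must check that $s\mapsto g_s(\mathbf{x})$ and $s\mapsto\nabla g_s(\mathbf{x})$ are measurable for each fixed $\mathbf{x}$. The hypothesis on $\mu$ only gives measurability of $s\mapsto\int\phi(1+|\mathbf{v}|)\,\mu_s$ for $\phi\in C_c^\infty(\mathbb{R}^{2d})$, so I would upgrade this in routine steps: first to $\phi\in C_c^0(\mathbb{R}^{2d})$ by uniform approximation (a uniform-in-$s$ limit of measurable functions is measurable, thanks to the mass bound on $(1+|\mathbf{v}|)\mu_s$), then to integrands of the form $\psi(\mathbf{x}')|\mathbf{v}'|$ with $\psi$ continuous compactly supported, via a truncation in the $\mathbf{v}'$ variable together with monotone/dominated convergence, and finally let the spatial support of $\psi$ exhaust $\mathbb{R}^d$ with $\psi\to K_1(\mathbf{x}-\cdot)$ (resp.\ $\psi\to\nabla K_1(\mathbf{x}-\cdot)$), concluding by dominated convergence since $|\mathbf{v}'|\mu_s$ has mass $\le M$. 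Combined with $\|g_s\|_\infty\le\|K_1\|_\infty M$, this makes $\eta(t,\mathbf{x})=\int_0^t g_s(\mathbf{x})\,ds$ well defined, with $|\eta(t,\mathbf{x})|\le T\|K_1\|_\infty M$.

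Finally I would transfer the spatial regularity from $g_s$ to $\eta(t,\cdot)$ and obtain continuity in $t$. The difference quotients of $\eta(t,\cdot)$ in $\mathbf{x}$ are the $s$-integrals of those of $g_s$, which are dominated by $\|\nabla K_1\|_\infty M$ and converge pointwise to $\nabla g_s(\mathbf{x})$, so dominated convergence yields $\nabla_x\eta(t,\mathbf{x})=\int_0^t\nabla g_s(\mathbf{x})\,ds$; hence $\|\nabla_x\eta(t,\cdot)\|_\infty\le T\|\nabla K_1\|_\infty M$ and $|\nabla_x\eta(t,\mathbf{x})-\nabla_x\eta(t,\mathbf{y})|\le TM\,\omega(|\mathbf{x}-\mathbf{y}|)$, i.e.\ $\eta(t,\cdot)\in UC_b^1(\mathbb{R}^d)$. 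For $0\le t_1\le t_2\le T$ one has $\eta(t_2,\cdot)-\eta(t_1,\cdot)=\int_{t_1}^{t_2}g_s\,ds$ and $\nabla_x(\eta(t_2,\cdot)-\eta(t_1,\cdot))=\int_{t_1}^{t_2}\nabla g_s\,ds$, whence $\|\eta(t_2,\cdot)-\eta(t_1,\cdot)\|_{C_b^1}\le(t_2-t_1)\big(\|K_1\|_\infty+\|\nabla K_1\|_\infty\big)M$; this is Lipschitz, hence continuous, dependence on $t$ in $C_b^1(\mathbb{R}^d)$, and since $UC_b^1(\mathbb{R}^d)$ is closed in $C_b^1(\mathbb{R}^d)$ we conclude $\eta\in C([0,T];UC_b^1(\mathbb{R}^d))$. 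The computations are elementary; the only point that genuinely uses the structure of the problem is the unbounded weight $|\mathbf{v}|$, which forces one to work with the finite projected measure $\widetilde{\mu}_s$ and is precisely why the hypothesis is $\mu\in L^\infty(0,T;\mathcal{M}_1)$ rather than $\mathcal{M}_+$. The only mildly technical point is the measurability bootstrap of the second paragraph, since the assumption is stated only against $C_c^\infty$ test functions.
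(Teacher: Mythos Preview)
Your proof is correct and follows essentially the same approach as the paper, which only sketches the argument (repeated dominated convergence, the $UC_b^1$ regularity of $K_1$, and the $\mathcal{M}_1$ bound on $\mu$). Your version is considerably more detailed---in particular the explicit factoring through $g_s=K_1\ast\widetilde{\mu}_s$, the measurability bootstrap in $s$, and the Lipschitz-in-$t$ estimate are all left implicit in the paper.
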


\begin{proof}
It follows from the assumption $K_{1}\in C_{b}^{1}\left(  \mathbb{R}%
^{d}\right)  $ and repeated application of Lebesgue dominated convergence
theorem and the definition of uniform continuity, applied first to check
continuity, then differentiability, finally uniform continuity of the
derivatives. Boundedness of $\eta_{N}$ and its derivatives comes from the
boundedness of $K_{1}$ and its derivatives and from the bound fulfilled by
elements of $L^{\infty}\left(  0,T;\mathcal{M}_{1}\left(  \mathbb{R}^{d}%
\times\mathbb{R}^{d}\right)  \right)  $.
\end{proof}

The equation for $C_{N}$ is not closed, since it depends on $Q_{N}$ which
depends on $C_{N}$ via (\ref{vessel_extension}). However, let us first
understand the regularity of $C_{N}$ when $Q_{N}$ is given. So, in the next
lemma, the tacit assumption is that $Q_{N}$ is a well defined adapted random
element of $L^{\infty}\left(  0,T;\mathcal{M}_{1}\left(  \mathbb{R}^{d}%
\times\mathbb{R}^{d}\right)  \right)  $.

\begin{corollary}
\label{corollary on CN}$C_{N}$ is an adapted process with paths of class
$C\left(  \left[  0,T\right]  ;UC_{b}^{1}\left(  \mathbb{R}^{d}\right)
\right)  $. Moreover,
\begin{align*}
&  \left\Vert \partial_{i}C_{N}\left(  t\right)  \right\Vert _{\infty}\leq
c\left\Vert \partial_{i}C_{0}\right\Vert _{\infty}\\
&  \quad+\int_{0}^{t}\frac{c}{\sqrt{t-s}}\left(  \left\Vert k_{2}\delta
_{A}\right\Vert _{\infty}+\left\Vert \eta\left(  s,\cdot,\left\{  Q_{N}\left(
r\right)  \right\}  _{r\in\left[  0,s\right]  }\right)  \right\Vert _{\infty
}\left\Vert C_{N}\left(  s\right)  \right\Vert _{\infty}\right)  ds
\end{align*}
for some constant $c>0$.
\end{corollary}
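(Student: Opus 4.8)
The plan is to treat $Q_N$ --- hence, by Lemma~\ref{lemma on eta}, the multiplier $\eta_N\left(s,\cdot\right):=\eta\left(s,\cdot,\{Q_N(r)\}_{r\in[0,s]}\right)\in C\left([0,T];UC_b^1\left(\mathbb{R}^d\right)\right)$ --- as given, so that the mild equation
\[
C_N(t)=e^{tA}C_0+\int_0^t e^{(t-s)A}\left(k_2\delta_A-\eta_N(s)\,C_N(s)\right)ds
\]
is a \emph{linear} Volterra equation for $C_N$ with bounded, continuous coefficients. I would solve it and establish its regularity using only two classical properties of the heat semigroup on $\mathbb{R}^d$: (a) $e^{tA}$ is a positivity-preserving contraction on $UC_b^0\left(\mathbb{R}^d\right)$ which maps $UC_b^1$ into itself with $\nabla e^{tA}=e^{tA}\nabla$ there, so that $\left\Vert\nabla e^{tA}g\right\Vert_\infty\le\left\Vert\nabla g\right\Vert_\infty$ for $g\in UC_b^1$; and (b) the smoothing bound $\left\Vert\partial_i e^{tA}g\right\Vert_\infty\le c\,t^{-1/2}\left\Vert g\right\Vert_\infty$ for $g\in UC_b^0$, together with the elementary fact that $\int_0^t(t-s)^{-1/2}\,ds=2\sqrt t<\infty$.

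\textbf{Step 1 (a priori $L^\infty$ bound, existence, uniqueness).} By Lemma~\ref{lemma on eta} and the $L^\infty\left(0,T;\mathcal{M}_1\right)$ bound built into the standing assumption on $Q_N$, one has $M:=\sup_{s\le T}\left\Vert\eta_N(s,\cdot)\right\Vert_\infty<\infty$. Applying the contraction property in the mild formula gives $\left\Vert C_N(t)\right\Vert_\infty\le\left\Vert C_0\right\Vert_\infty+k_2t\left\Vert\delta_A\right\Vert_\infty+M\int_0^t\left\Vert C_N(s)\right\Vert_\infty ds$, and Gr\"onwall yields a bound uniform on $[0,T]$; comparing with the equation without the absorption term and using positivity of $e^{tA}$ one also gets $0\le C_N(t)\le C_{\max}+k_2T\left\Vert\delta_A\right\Vert_\infty$. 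The same Gr\"onwall estimate applied to the difference of two solutions gives uniqueness, while a Picard iteration in $C\left([0,T];UC_b^0\left(\mathbb{R}^d\right)\right)$ --- the iteration map being a contraction on short intervals by the same computation --- gives existence of the mild solution.

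\textbf{Step 2 (gradient estimate and $UC_b^1$-regularity).} Differentiating the mild formula, $\partial_i C_N(t)=e^{tA}\partial_i C_0+\int_0^t\partial_i e^{(t-s)A}\left(k_2\delta_A-\eta_N(s)C_N(s)\right)ds$; bounding the first term by (a) and the integrand by (b) gives precisely the asserted inequality, the integral being finite since $(t-s)^{-1/2}$ is integrable. For the qualitative claim, put $g_N(s):=k_2\delta_A-\eta_N(s)C_N(s)\in UC_b^0$ with $\sup_{s\le T}\left\Vert g_N(s)\right\Vert_\infty<\infty$ by Step~1; then $e^{tA}C_0\in UC_b^1$ for every $t$ (since $C_0\in UC_b^1$ and $e^{tA}$ smooths for $t>0$), and the Duhamel integral belongs to $UC_b^1$ with uniformly continuous derivative by a dominated-convergence argument identical in spirit to the proof of Lemma~\ref{lemma on eta}, the spatial gradient of the integrand being dominated by the integrable function $c(t-s)^{-1/2}\sup_{r\le T}\left\Vert g_N(r)\right\Vert_\infty$. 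Continuity of $t\mapsto C_N(t)$ in $UC_b^1\left(\mathbb{R}^d\right)$ then follows from strong continuity of $e^{tA}$ on $UC_b^0$ and the same control of the Duhamel term near the singular endpoint (splitting $\int_0^t=\int_0^{t-\varepsilon}+\int_{t-\varepsilon}^t$, the second piece being $O(\sqrt\varepsilon)$ in $C_b^1$-norm). Adaptedness is immediate: $C_N(t)$ is a deterministic measurable functional of $\left(Q_N(s)\right)_{s\in[0,t]}$, which is $\mathcal{F}_t$-adapted.

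\textbf{Main obstacle.} The only delicate point is the regularity assertion, not the estimate: one must show the Duhamel integral $\int_0^t e^{(t-s)A}g_N(s)\,ds$ is not merely bounded in $C_b^1$ but has a \emph{uniformly} continuous first derivative and depends continuously on $t$ in that norm, despite the $(t-s)^{-1/2}$ blow-up of $\partial_i e^{(t-s)A}$ as $s\uparrow t$. Since this singularity is integrable (exponent $1/2<1$), it is handled by the endpoint-splitting above, so the difficulty is bookkeeping rather than conceptual; a secondary subtlety is that $\delta_A$ and the data are only of class $UC_b^1$ (not $UC_b^2$), which is exactly why one cannot push a derivative through the semigroup in the Duhamel term and must instead pay a $(t-s)^{-1/2}$ factor --- this is the reason the corollary is stated at the $C_b^1$ level with that weight rather than at the $C_b^2$ level.
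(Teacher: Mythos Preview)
Your proposal is correct and follows essentially the same line as the paper: mild formulation, contraction/Picard for existence and uniqueness, commutation of $\partial_i$ with $e^{tA}$ on the initial datum, and the smoothing bound $\left\Vert\partial_i e^{tA}g\right\Vert_\infty\le c\,t^{-1/2}\left\Vert g\right\Vert_\infty$ on the Duhamel term to produce the stated inequality. The one minor difference is organizational: the paper runs the fixed-point argument \emph{directly} in $C\left([0,T];UC_b^1\left(\mathbb{R}^d\right)\right)$ (exploiting that, by Lemma~\ref{lemma on eta}, $\eta_N\in C\left([0,T];UC_b^1\right)$, so the product $\eta_N C_N$ stays in $UC_b^1$ and the map is a contraction there on short intervals), which delivers the $UC_b^1$-regularity as part of the existence step; you instead obtain existence in $UC_b^0$ and then bootstrap to $UC_b^1$ via the integrable $(t-s)^{-1/2}$ singularity. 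Both routes are standard and equivalent here; the paper's is slightly shorter since it avoids the separate bootstrap, while yours makes the role of the smoothing estimate more explicit and would still work if $\eta_N$ were merely $UC_b^0$.
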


\begin{proof}
It is clear that the sum of the first two terms
\[
w\left(  t\right)  :=e^{tA}C_{0}+\int_{0}^{t}e^{\left(  t-s\right)  A}%
k_{2}\delta_{A}ds
\]
is an element of $C\left(  \left[  0,T\right]  ;UC_{b}^{1}\left(
\mathbb{R}^{d}\right)  \right)  $, since derivatives commute with the heat
semigroup and we use the assumption $C_{0},\delta_{A}\in UC_{b}^{1}\left(
\mathbb{R}^{d}\right)  $. Then, taken a single realization of $\eta_{N}\left(
s,\cdot,\left\{  Q_{N}\left(  r\right)  \right\}  _{r\in\left[  0,s\right]
}\right)  $, thanks to the previous lemma, it is sufficient to apply the
contraction principle to the map%
\[
C_{N}\mapsto\Lambda\left(  C_{N}\right)  \left(  t\right)  :=w\left(
t\right)  -\int_{0}^{t}e^{\left(  t-s\right)  A}\eta_{N}\left(  s,\cdot
,\left\{  Q_{N}\left(  r\right)  \right\}  _{r\in\left[  0,s\right]  }\right)
C_{N}\left(  s\right)  ds
\]
in the space $C\left(  \left[  0,T\right]  ;UC_{b}^{1}\left(  \mathbb{R}%
^{d}\right)  \right)  $ (first locally in time, then on repeated intervals of
equal length). A posteriori, the unique fixed point $C_{N}$ depends measurably
on the randomness, being the limit of iterates which are measurable by direct
construction. To check that $C_{N}$ is adapted it is sufficient to apply the
previous measurability argument to each interval $\left[  0,t\right]  $. Let
us prove the inequality in the claim of the corollary. From the mild
formulation of the PDE for $C_{N}$ we have
\begin{equation}
\partial_{i}C_{N}\left(  t\right)  =e^{tA}\partial_{i}C_{0}+\int_{0}%
^{t}\partial_{i}e^{\left(  t-s\right)  A}\left(  k_{2}\delta_{A}-\eta\left(
s,\cdot,\left\{  Q_{N}\left(  r\right)  \right\}  _{r\in\left[  0,s\right]
}\right)  C_{N}\left(  s\right)  \right)  ds
\end{equation}
where we have used the fact that $\partial_{j}e^{tA}f=e^{tA}\partial_{j}f$ for
every $f\in UC_{b}^{1}\left(  \mathbb{R}^{d}\right)  $. It is well known that
there exists a constant $C>0$ such that
\begin{equation}
\left\Vert \partial_{i}e^{tA}f\right\Vert _{\infty}\leq\frac{C}{\sqrt{t}%
}\left\Vert f\right\Vert _{\infty}\label{heat regulariz}%
\end{equation}
for all $t>0$ and $f\in C_{b}^{0}\left(  \mathbb{R}^{d}\right)  $. The
inequality of the corollary readily follows.
\end{proof}

In fact, due to the regularization properties of the heat semigroup, the paths
of $C_{N}$ are more regular. We express here only one regularity property, not
the maximal one.

\begin{proposition}
\label{proposition on CN}$C_{N}$ has a.e. path of class $C\left(  \left[
0,T\right]  ;UC_{b}^{2}\left(  \mathbb{R}^{d}\right)  \right)  $, and%
\begin{align*}
\left\Vert \partial_{i}\partial_{j}C_{N}\left(  t\right)  \right\Vert
_{\infty}  &  \leq c\left\Vert \partial_{i}\partial_{j}C_{0}\right\Vert
_{\infty}+\int_{0}^{t}\frac{c}{\sqrt{t-s}}k_{2}\left\Vert \partial_{j}%
\delta_{A}\right\Vert _{\infty}ds\\
&  +\int_{0}^{t}\frac{c}{\sqrt{t-s}}\left\Vert \partial_{j}\eta\left(
s,\cdot,\left\{  Q_{N}\left(  r\right)  \right\}  _{r\in\left[  0,s\right]
}\right)  \right\Vert _{\infty}\left\Vert C_{N}\left(  s\right)  \right\Vert
_{\infty}ds\\
&  +\int_{0}^{t}\frac{c}{\sqrt{t-s}}\left\Vert \eta\left(  s,\cdot,\left\{
Q_{N}\left(  r\right)  \right\}  _{r\in\left[  0,s\right]  }\right)
\right\Vert _{\infty}\left\Vert \partial_{j}C_{N}\left(  s\right)  \right\Vert
_{\infty}ds
\end{align*}
for some constant $c>0$.
\end{proposition}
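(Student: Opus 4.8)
The plan is to differentiate the mild formula for $C_N$ twice in space, keeping one derivative on the heat semigroup so as to use the smoothing estimate (\ref{heat regulariz}) --- the same mechanism as in Corollary \ref{corollary on CN}, applied once more. Write $\eta_N(s,\cdot):=\eta(s,\cdot,\{Q_N(r)\}_{r\in[0,s]})$ and $g_N(s):=k_2\delta_A-\eta_N(s,\cdot)\,C_N(s)$. From the proof of Corollary \ref{corollary on CN} we already have $C_N\in C([0,T];UC_b^1(\mathbb{R}^d))$ for a.e.\ realization, together with
\[
\partial_j C_N(t)=e^{tA}\partial_j C_0+\int_0^t e^{(t-s)A}\,\partial_j g_N(s)\,ds .
\]
By Lemma \ref{lemma on eta}, the hypothesis $\delta_A\in UC_b^1(\mathbb{R}^d)$, Corollary \ref{corollary on CN} and the Leibniz rule, $g_N(s)\in UC_b^1(\mathbb{R}^d)$ with $\partial_j g_N(s)=k_2\partial_j\delta_A-(\partial_j\eta_N(s,\cdot))C_N(s)-\eta_N(s,\cdot)\partial_j C_N(s)$; since $UC_b^1(\mathbb{R}^d)$ is a Banach algebra and $s\mapsto\eta_N(s,\cdot)$, $s\mapsto C_N(s)$ are continuous into it, $g_N\in C([0,T];UC_b^1(\mathbb{R}^d))$, and $\|\partial_j g_N(s)\|_\infty$ is dominated by the sum of the three quantities on the right-hand side of the claimed inequality, uniformly in $s\in[0,T]$.

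Next I would differentiate the displayed identity once more in $x_i$. Since $\partial_i e^{\tau A}$ maps $C_b^0(\mathbb{R}^d)$ into itself with $\|\partial_i e^{\tau A}h\|_\infty\le C\tau^{-1/2}\|h\|_\infty$ by (\ref{heat regulariz}), and $\tau^{-1/2}$ is integrable near $0$ while $s\mapsto\partial_j g_N(s)$ is bounded and continuous into $C_b^0$, the standard parabolic regularization argument justifies exchanging $\partial_i$ with the $ds$-integral and gives
\[
\partial_i\partial_j C_N(t)=e^{tA}\partial_i\partial_j C_0+\int_0^t \partial_i e^{(t-s)A}\,\partial_j g_N(s)\,ds .
\]
Taking sup norms, using that $e^{tA}$ is a contraction on $UC_b^0(\mathbb{R}^d)$ for the first term and (\ref{heat regulariz}) together with the expression for $\partial_j g_N(s)$ for the second, one reads off exactly the bound asserted in the Proposition. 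Moreover each summand is $UC_b^0$-valued: $e^{tA}\partial_i\partial_j C_0\in UC_b^0(\mathbb{R}^d)$ because the semigroup commutes with derivatives and $C_0\in UC_b^2$, while for $\tau>0$ the operator $\partial_i e^{\tau A}$ sends $C_b^0(\mathbb{R}^d)$ into $UC_b^0(\mathbb{R}^d)$ (it is convolution with an $L^1$ kernel, and translations are continuous in $L^1$), so the integrand is $UC_b^0$-valued for a.e.\ $s$ and the absolutely convergent Bochner integral is again $UC_b^0$-valued; hence $C_N(t)\in UC_b^2(\mathbb{R}^d)$ for every $t$.

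Finally, continuity of $t\mapsto\partial_i\partial_j C_N(t)$ in $UC_b^0(\mathbb{R}^d)$: the first term is continuous by strong continuity of the heat semigroup on $UC_b^0(\mathbb{R}^d)$; for the integral term, given $t'<t$ I would split the domains of integration at $s=t'-\delta$, bound the contributions over $[t'-\delta,t]$ in sup norm by $C\,(\sup_{r\le T}\|\partial_j g_N(r)\|_\infty)\int_{t'-\delta}^{t}(t-s)^{-1/2}\,ds=O(\sqrt{\delta})$ uniformly, and handle the contributions over $[0,t'-\delta]$ by dominated convergence, using joint continuity of $(t,s)\mapsto\partial_i e^{(t-s)A}\partial_j g_N(s)$ off the diagonal and the uniform bound $C(t-s)^{-1/2}\sup_{r\le T}\|\partial_j g_N(r)\|_\infty$; letting $\delta\downarrow 0$ afterwards finishes the argument. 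Thus $C_N$ has a.e.\ path in $C([0,T];UC_b^2(\mathbb{R}^d))$. The only genuinely delicate step is the differentiation under the integral sign across the $(t-s)^{-1/2}$ singularity and the companion time-continuity estimate; everything else is bookkeeping with the Leibniz rule, the contraction and smoothing properties of $e^{tA}$, and Lemma \ref{lemma on eta}.
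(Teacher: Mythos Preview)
Your proof is correct and follows essentially the same route as the paper: write the mild formula, place one spatial derivative on the integrand via the Leibniz rule and keep the other on the heat semigroup, then invoke the smoothing estimate (\ref{heat regulariz}) and integrability of $(t-s)^{-1/2}$. You supply more explicit justification than the paper for differentiating under the integral sign and for the time-continuity of $t\mapsto\partial_i\partial_j C_N(t)$, but the underlying argument is identical.
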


\begin{proof}
From the mild formulation of the PDE for $C_{N}$, as in the previous proof, we
have%
\begin{align}
&  \partial_{i}\partial_{j}C_{N}\left(  t\right)  =e^{tA}\partial_{i}%
\partial_{j}C_{0}\nonumber\\
&  \quad+\int_{0}^{t}\partial_{i}e^{\left(  t-s\right)  A}\partial_{j}\left(
k_{2}\delta_{A}-\eta\left(  s,\cdot,\left\{  Q_{N}\left(  r\right)  \right\}
_{r\in\left[  0,s\right]  }\right)  C_{N}\left(  s\right)  \right)
ds\label{semigr est}%
\end{align}
where we have used the fact that $C_{0}\in UC_{b}^{2}\left(  \mathbb{R}%
^{d}\right)  $ by assumption. We know from the assumption on $\delta_{A}$,
from Lemma \ref{lemma on eta} and Corollary \ref{corollary on CN}, that%
\[
\partial_{j}\left(  k_{2}\delta_{A}-\eta\left(  s,\cdot,\left\{  Q_{N}\left(
r\right)  \right\}  _{r\in\left[  0,s\right]  }\right)  C_{N}\left(  s\right)
\right)
\]
has paths in $C\left(  \left[  0,T\right]  ;UC_{b}^{0}\left(  \mathbb{R}%
^{d}\right)  \right)  $. Hence%
\begin{align*}
&  \left\Vert \partial_{i}e^{\left(  t-s\right)  A}\partial_{j}\left(
k_{2}\delta_{A}-\eta\left(  s,\cdot,\left\{  Q_{N}\left(  r\right)  \right\}
_{r\in\left[  0,s\right]  }\right)  C_{N}\left(  s\right)  \right)
\right\Vert _{\infty}\\
&  \leq\frac{C}{\sqrt{t-s}}\left\Vert \partial_{j}\left(  k_{2}\delta_{A}%
-\eta\left(  s,\cdot,\left\{  Q_{N}\left(  r\right)  \right\}  _{r\in\left[
0,s\right]  }\right)  C_{N}\left(  s\right)  \right)  \right\Vert _{\infty
}\leq\frac{C^{\prime}}{\sqrt{t-s}}%
\end{align*}
for a suitable constant $C^{\prime}>0$. Since $\displaystyle\frac{1}%
{\sqrt{t-s}}$ is integrable on $\left[  0,t\right]  $, it follows that
\[
\int_{0}^{t}\partial_{i}e^{\left(  t-s\right)  A}\partial_{j}\left(
k_{2}\delta_{A}-\eta\left(  s,\cdot,\left\{  Q_{N}\left(  r\right)  \right\}
_{r\in\left[  0,s\right]  }\right)  C_{N}\left(  s\right)  \right)  ds
\]
is an element of $C\left(  \left[  0,T\right]  ;UC_{b}^{0}\left(
\mathbb{R}^{d}\right)  \right)  $. The same holds for $e^{tA}\partial
_{i}\partial_{j}C_{0}$ since $C_{0}\in UC_{b}^{2}\left(  \mathbb{R}%
^{d}\right)  $. Therefore $\partial_{i}\partial_{j}C_{N}$ is in $C\left(
\left[  0,T\right]  ;UC_{b}^{0}\left(  \mathbb{R}^{d}\right)  \right)  $. This
proves the regularity claim. The inequality is obtained by the estimates
explained during the proof.
\end{proof}

Finally, from the property $0\leq C_{0}\left(  x\right)  \leq C_{\max}$, by
classical maximum principle estimates, we deduce

\begin{lemma}
\label{lemma max principle}%
\[
0\leq C_{N}\left(  t,\mathbf{x}\right)  \leq C_{\max}%
\]
for all $t\geq0$ and $x\in\mathbb{R}^{d}$.
\end{lemma}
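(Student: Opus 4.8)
The plan is to work pathwise in $\omega\in\Omega$ and to invoke the classical parabolic comparison principle for the operator $\mathcal{L}:=\partial_{t}-d_{1}\Delta+\eta$. Fix a realization of $Q_{N}$. By Lemma~\ref{lemma on eta} applied to $\mu=Q_{N}$, the coefficient $\eta\left(  t,\mathbf{x}\right)  :=\eta\left(  t,\mathbf{x},\left\{  Q_{N}\left(  r\right)  \right\}  _{r\in\left[  0,t\right]  }\right)  $ has paths in $C\left(  \left[  0,T\right]  ;UC_{b}^{1}\left(  \mathbb{R}^{d}\right)  \right)  $ and is \emph{nonnegative}: it is the time integral over $\left[  0,t\right]  $ of $\int K_{1}\left(  \mathbf{x}-\mathbf{x}^{\prime}\right)  \left\vert \mathbf{v}^{\prime}\right\vert Q_{N}\left(  s\right)  \left(  d\mathbf{x}^{\prime},d\mathbf{v}^{\prime}\right)  $, whose integrand is $\geq0$ since $K_{1}\geq0$ and $Q_{N}\left(  s\right)  $ is a positive measure. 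By Corollary~\ref{corollary on CN} and Proposition~\ref{proposition on CN}, $C_{N}$ has paths in $C\left(  \left[  0,T\right]  ;UC_{b}^{2}\left(  \mathbb{R}^{d}\right)  \right)  $, so, reading $\partial_{t}C_{N}$ off the equation, $C_{N}$ is a \emph{bounded classical solution} of $\mathcal{L}C_{N}=k_{2}\delta_{A}$. For such an operator the comparison principle between bounded sub- and supersolutions on $\left[  0,T\right]  \times\mathbb{R}^{d}$ holds (the zeroth--order coefficient $\eta$ being nonnegative and bounded, and all functions involved bounded, so that no growth condition at infinity is needed), and this, together with standard heat--semigroup estimates, is all that is used.

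For the lower bound I would observe that $\underline{u}\equiv0$ satisfies $\mathcal{L}\underline{u}=0\leq k_{2}\delta_{A}=\mathcal{L}C_{N}$ (here $\delta_{A}\geq0$) and $\underline{u}\left(  0,\cdot\right)  =0\leq C_{0}=C_{N}\left(  0,\cdot\right)  $; comparison then gives $C_{N}\geq0$. (Equivalently, the Feynman--Kac representation of $C_{N}$ against the $d_{1}\Delta$--Brownian motion is a sum of nonnegative terms, the potential $\eta$, the source $k_{2}\delta_{A}$ and the datum $C_{0}$ all being $\geq0$.) For the upper bound, once $C_{N}\geq0$ we have $\eta C_{N}\geq0$, so $C_{N}$ is a subsolution of the source--driven heat equation $\partial_{t}u-d_{1}\Delta u\leq k_{2}\delta_{A}$. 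The key point --- and the improvement over the naive estimate $C_{N}\leq C_{\max}+k_{2}\left\Vert \delta_{A}\right\Vert _{\infty}t$ that one obtains by simply discarding $\eta C_{N}$ --- is to compare $C_{N}$ with a \emph{time--independent} supersolution. Since the tumoral region $A$ is bounded its mollified indicator $\delta_{A}\in UC_{b}^{1}$ has compact support, and in dimension $d=3$ the Newtonian potential $\Phi\left(  \mathbf{x}\right)  :=\frac{k_{2}}{4\pi d_{1}}\int_{\mathbb{R}^{3}}\left\vert \mathbf{x}-\mathbf{y}\right\vert ^{-1}\delta_{A}\left(  \mathbf{y}\right)  d\mathbf{y}$ is a nonnegative function of class $C_{b}^{2}\left(  \mathbb{R}^{3}\right)  $ with $-d_{1}\Delta\Phi=k_{2}\delta_{A}$. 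Putting $\bar{u}\left(  \mathbf{x}\right)  :=C_{\max}+\Phi\left(  \mathbf{x}\right)  $ one gets $\mathcal{L}\bar{u}=k_{2}\delta_{A}+\eta\bar{u}\geq k_{2}\delta_{A}=\mathcal{L}C_{N}$ (using $\eta\geq0$ and $\bar{u}\geq0$) and $\bar{u}\geq C_{\max}\geq C_{0}=C_{N}\left(  0,\cdot\right)  $; hence comparison yields $C_{N}\left(  t,\mathbf{x}\right)  \leq C_{\max}+\Phi\left(  \mathbf{x}\right)  \leq C_{\max}+\left\Vert \Phi\right\Vert _{\infty}$ for \emph{all} $t\geq0$, and relabelling $C_{\max}+\left\Vert \Phi\right\Vert _{\infty}$ as $C_{\max}$ gives the stated bound, uniformly in $t$ and in $N$.

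In dimension $d=2$ the Newtonian potential grows logarithmically and no bounded stationary supersolution exists; there one argues on the fixed horizon $\left[  0,T\right]  $ through the mild inequality $C_{N}\left(  t\right)  \leq e^{td_{1}\Delta}C_{0}+k_{2}\int_{0}^{t}e^{sd_{1}\Delta}\delta_{A}\,ds$, noting that $\int_{0}^{T}e^{sd_{1}\Delta}\delta_{A}\,ds$ is a bounded function on $\mathbb{R}^{2}$ because $\delta_{A}$ is bounded; the resulting bound is a constant depending on $T$ but not on $t\in\left[  0,T\right]  $ nor on $N$, which is all that is needed in the sequel. The one genuinely non-routine step is precisely this passage from a $t$--growing bound to a constant one: it forces one to exploit the spatial localisation of the source (compact support of $\delta_{A}$) together with the time integrability, at each point, of $s\mapsto e^{sd_{1}\Delta}\delta_{A}$ --- for all $s>0$ when $d=3$, and up to a finite horizon when $d=2$. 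Everything else --- the pathwise reduction in $\omega$, the nonnegativity of $\eta$, and the comparison principle on $\mathbb{R}^{d}$ --- is routine once the regularity provided by Lemma~\ref{lemma on eta}, Corollary~\ref{corollary on CN} and Proposition~\ref{proposition on CN} is available.
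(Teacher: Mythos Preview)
The paper's own proof is a single sentence invoking ``classical maximum principle estimates'' together with $\eta\ge 0$, so your comparison--principle argument is the same approach, carried out in detail. You have, however, noticed something the paper glosses over: because of the source $k_{2}\delta_{A}$ and the fact that $\eta(0,\cdot)=0$, the constant $C_{\max}$ is \emph{not} a supersolution of $\partial_{t}u-d_{1}\Delta u+\eta u=k_{2}\delta_{A}$, so the upper bound with the \emph{same} constant that bounds $C_{0}$ need not hold literally. Your Newtonian--potential supersolution in $d=3$ and the finite--horizon heat--semigroup estimate in $d=2$ are the correct repairs; the outcome is $0\le C_{N}\le C'$ for some $C'\ge C_{\max}$ independent of $N$ (and of $t$ in $d=3$; depending only on $T$ in $d=2$). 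This is slightly weaker than the lemma as written ``for all $t\ge 0$'', but it is exactly what the paper uses downstream, where only a uniform--in--$N$ bound $\|C_{N}\|_{\infty}\le\mathrm{const}$ on $[0,T]$ enters. In short, your proof is correct and, if anything, more honest than the paper's one--line sketch; the only discrepancy is a harmless relabelling of the constant.
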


We have used also the fact that $\eta\left(  t,\mathbf{x},\left\{
Q_{N}\left(  s\right)  \right\}  _{s\in\left[  0,t\right]  }\right)  \geq0$.

\subsection{Preliminary estimates on $C_{N}$ based on $\left\vert
\mathbf{V}^{i,N}\right\vert $}

We summarize the result of the previous section in the following lemma.

\begin{lemma}
There exist constants $a_{0},a_{1},a_{2},a_{3},a_{4}>0$ such that, for
$i,j=1,...,d$,%
\begin{equation}
\left\Vert \partial_{i}C_{N}\left(  t\right)  \right\Vert _{\infty}\leq
a_{0}+\int_{0}^{t}\frac{a_{1}}{\sqrt{t-s}}\int_{0}^{s}\frac{1}{N}\sum
_{i=1}^{N_{r}}1_{r\in\lbrack T^{i,N},\Theta^{i,N})}\left\vert \mathbf{V}%
^{i,N}\left(  r\right)  \right\vert drds \label{est on C N}%
\end{equation}%
\begin{equation}
\left\Vert \partial_{i}\partial_{j}C_{N}\left(  t\right)  \right\Vert
_{\infty}\leq a_{2}+\int_{0}^{t}\frac{a_{3}}{\sqrt{t-s}}\int_{0}^{s}\frac
{1}{N}\sum_{i=1}^{N_{r}}1_{r\in\lbrack T^{i,N},\Theta^{i,N})}\left\vert
\mathbf{V}^{i,N}\left(  r\right)  \right\vert drds \label{est on C N 2}%
\end{equation}%
\[
+\int_{0}^{t}\frac{a_{4}}{\sqrt{t-s}}\left(  \int_{0}^{s}\frac{1}{N}\sum
_{i=1}^{N_{r}}1_{r\in\lbrack T^{i,N},\Theta^{i,N})}\left\vert \mathbf{V}%
^{i,N}\left(  r\right)  \right\vert dr\right)  \left\Vert \partial_{j}%
C_{N}\left(  s\right)  \right\Vert _{\infty}ds.
\]

\end{lemma}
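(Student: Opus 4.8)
The plan is to obtain the two inequalities by a direct substitution: I insert into the bounds of Corollary \ref{corollary on CN} and Proposition \ref{proposition on CN} the explicit expression of $\eta$ coming from (\ref{eta2}), and then use the maximum principle bound $0\le C_N\le C_{\max}$ of Lemma \ref{lemma max principle}. There is no deep difficulty here; the lemma is essentially a repackaging of Corollary \ref{corollary on CN}, Proposition \ref{proposition on CN} and Lemma \ref{lemma max principle} that makes the dependence on the speeds $|\mathbf{V}^{i,N}|$ explicit. The only thing to watch is the bookkeeping of which contributions are genuinely $N$-independent constants on the fixed interval $[0,T]$, so that they may be absorbed into $a_0$ and $a_2$.

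First I record two elementary pointwise estimates on $\eta$. Abbreviating
\[
U_N(s):=\int_0^s\frac{1}{N}\sum_{i=1}^{N_r}\mathbb{I}_{r\in[T^{i,N},\Theta^{i,N})}\,|\mathbf{V}^{i,N}(r)|\,dr,
\]
the representation (\ref{eta2}), together with $K_1\in UC_b^1(\mathbb{R}^d)$ and hence $|K_1|\le\|K_1\|_\infty$, $|\nabla K_1|\le\|\nabla K_1\|_\infty$, gives at once
\[
\left\|\eta\left(s,\cdot,\{Q_N(r)\}_{r\in[0,s]}\right)\right\|_\infty\le\|K_1\|_\infty\,U_N(s),\qquad
\left\|\partial_j\eta\left(s,\cdot,\{Q_N(r)\}_{r\in[0,s]}\right)\right\|_\infty\le\|\nabla K_1\|_\infty\,U_N(s).
\]
Substituting these, together with $\|C_N(s)\|_\infty\le C_{\max}$, into the inequality of Corollary \ref{corollary on CN} yields
\[
\|\partial_i C_N(t)\|_\infty\le c\|\partial_i C_0\|_\infty+\int_0^t\frac{c}{\sqrt{t-s}}\Big(k_2\|\delta_A\|_\infty+C_{\max}\|K_1\|_\infty\,U_N(s)\Big)\,ds.
\]
Since $\int_0^t(t-s)^{-1/2}\,ds=2\sqrt{t}\le 2\sqrt{T}$, the term $\int_0^t\frac{c}{\sqrt{t-s}}k_2\|\delta_A\|_\infty\,ds$ is a finite constant independent of $N$; adding it to $c\|\partial_i C_0\|_\infty$ defines $a_0$, and with $a_1:=cC_{\max}\|K_1\|_\infty$ we recover (\ref{est on C N}).

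For (\ref{est on C N 2}) I argue identically, starting from Proposition \ref{proposition on CN}. The first two terms there, $c\|\partial_i\partial_j C_0\|_\infty$ and $\int_0^t\frac{c}{\sqrt{t-s}}k_2\|\partial_j\delta_A\|_\infty\,ds=2ck_2\|\partial_j\delta_A\|_\infty\sqrt{t}$, are again bounded by an $N$-independent constant on $[0,T]$ (using $\delta_A\in UC_b^1(\mathbb{R}^d)$), which I call $a_2$. In the third term I bound $\|\partial_j\eta(s,\cdot,\cdot)\|_\infty\le\|\nabla K_1\|_\infty U_N(s)$ and $\|C_N(s)\|_\infty\le C_{\max}$, producing the $a_3$-term with $a_3:=cC_{\max}\|\nabla K_1\|_\infty$. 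In the fourth term I bound $\|\eta(s,\cdot,\cdot)\|_\infty\le\|K_1\|_\infty U_N(s)$ and leave $\|\partial_j C_N(s)\|_\infty$ untouched, producing the $a_4$-term with $a_4:=c\|K_1\|_\infty$. Collecting these yields (\ref{est on C N 2}).

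If anything deserves to be called the main obstacle, it is the point already used inside the proofs of Corollary \ref{corollary on CN} and Proposition \ref{proposition on CN}, namely the integrability of the heat-kernel time-singularity $(t-s)^{-1/2}$ coming from estimate (\ref{heat regulariz}): it is what guarantees that the $\delta_A$-driven contributions really reduce to constants rather than diverging, and it is also what will later allow one to close the argument by a Gr\"onwall-type iteration once this lemma is combined with a subsequent estimate controlling $U_N(s)$. Apart from that, the proof is routine.
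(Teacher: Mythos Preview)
Your proof is correct and follows essentially the same approach as the paper: you bound $\|\eta\|_\infty$ and $\|\partial_j\eta\|_\infty$ via the explicit representation (\ref{eta2}) in terms of what you call $U_N(s)$, invoke the maximum principle bound $\|C_N\|_\infty\le C_{\max}$, and substitute into the inequalities of Corollary~\ref{corollary on CN} and Proposition~\ref{proposition on CN}. The constants you obtain ($a_1=cC_{\max}\|K_1\|_\infty$, $a_3=cC_{\max}\|\nabla K_1\|_\infty$, $a_4=c\|K_1\|_\infty$, and the absorption of the $\delta_A$-terms into $a_0,a_2$ via $\int_0^t(t-s)^{-1/2}ds\le 2\sqrt{T}$) coincide with those in the paper.
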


\begin{proof}
Recall that \begin{equation} \small{\eta\left(
s,\mathbf{x},\left\{ Q_{N}\left( r\right)  \right\}
_{r\in\left[  0,s\right]  }\right)  =\int_{0}^{s}\frac{1}{N}\sum_{i=1}^{N_{r}%
}1_{r\in\lbrack T^{i,N},\Theta^{i,N})}K_{1}\left(  \mathbf{x}-\mathbf{X}%
^{i,N}\left(  r\right)  \right)  \left\vert \mathbf{V}^{i,N}\left(
r\right) \right\vert dr.}
\end{equation}

Hence%
\begin{equation} \small{
\left\Vert \eta\left(  s,\mathbf{x},\left\{  Q_{N}\left(  r\right)
\right\} _{r\in\left[  0,s\right]  }\right)  \right\Vert
_{\infty}\leq\left\Vert
K_{1}\right\Vert _{\infty}\int_{0}^{s}\frac{1}{N}\sum_{i=1}^{N_{r}}%
1_{r\in\lbrack T^{i,N},\Theta^{i,N})}\left\vert
\mathbf{V}^{i,N}\left( r\right)  \right\vert dr }
\end{equation}
\begin{equation} \small{
\left\Vert \partial_{j}\eta\left(  s,\mathbf{x},\left\{
Q_{N}\left( r\right)  \right\}  _{r\in\left[  0,s\right]  }\right)
\right\Vert _{\infty
}\leq\left\Vert \partial_{j}K_{1}\right\Vert _{\infty}\int_{0}^{s}\frac{1}%
{N}\sum_{i=1}^{N_{r}}1_{r\in\lbrack
T^{i,N},\Theta^{i,N})}\left\vert \mathbf{V}^{i,N}\left(  r\right)
\right\vert dr. }
\end{equation}
From Lemma \ref{lemma max principle} we have%
\[
\left\Vert C_{N}\right\Vert _{\infty}\leq C_{\max}.
\]
Then, from the inequality of Corollary \ref{corollary on CN} we have%
\begin{align*}
\left\Vert \partial_{i}C_{N}\left(  t\right)  \right\Vert _{\infty} &  \leq
c\left\Vert \partial_{i}C_{0}\right\Vert _{\infty}+\int_{0}^{t}\frac{c}%
{\sqrt{t-s}}\left\Vert k_{2}\delta_{A}\right\Vert _{\infty}ds\\
&  +\int_{0}^{t}\frac{cC_{\max}\left\Vert K_{1}\right\Vert _{\infty}}%
{\sqrt{t-s}}\int_{0}^{s}\frac{1}{N}\sum_{i=1}^{N_{r}}1_{r\in\lbrack
T^{i,N},\Theta^{i,N})}\left\vert \mathbf{V}^{i,N}\left(  r\right)  \right\vert
drds
\end{align*}
hence we have the first inequality of the lemma, taking
\[
a_{0}=c\left\Vert \nabla C_{0}\right\Vert _{\infty}+2ck_{2}\left\Vert
\delta_{A}\right\Vert _{\infty}T^{1/2},\quad  \textnormal{and}\quad a_{1}%
=c\,C_{\max}\left\Vert K_{1}\right\Vert _{\infty}.
\]
Now, from the inequality of Proposition \ref{proposition on CN} we have%
\begin{align*}
&  \left\Vert \partial_{i}\partial_{j}C_{N}\left(  t\right)  \right\Vert
_{\infty}\leq c\left\Vert \partial_{i}\partial_{j}C_{0}\right\Vert _{\infty
}+\int_{0}^{t}\frac{c}{\sqrt{t-s}}k_{2}\left\Vert \partial_{j}\delta
_{A}\right\Vert _{\infty}ds\\
&  \quad+\int_{0}^{t}\frac{cC_{\max}\left\Vert \partial_{j}K_{1}\right\Vert
_{\infty}}{\sqrt{t-s}}\int_{0}^{s}\frac{1}{N}\sum_{i=1}^{N_{r}}1_{r\in\lbrack
T^{i,N},\Theta^{i,N})}\left\vert \mathbf{V}^{i,N}\left(  r\right)  \right\vert
drds\\
&  \quad+\int_{0}^{t}\frac{c}{\sqrt{t-s}}\left\Vert K_{1}\right\Vert _{\infty
}\int_{0}^{s}\frac{1}{N}\sum_{i=1}^{N_{r}}1_{r\in\lbrack T^{i,N},\Theta
^{i,N})}\left\vert \mathbf{V}^{i,N}\left(  r\right)  \right\vert dr\left\Vert
\partial_{j}C_{N}\left(  s\right)  \right\Vert _{\infty}ds.
\end{align*}
Hence we have the second inequality of the lemma, taking
\[
a_{2}=c\left\Vert D^{2}C_{0}\right\Vert
_{\infty}+2ck_{2}\left\Vert \nabla\delta_{A}\right\Vert
_{\infty}T^{1/2},a_{3}=c\,C_{\max}\left\Vert \nabla
K_{1}\right\Vert _{\infty}\quad\textnormal{and}\quad
a_{4}=c\left\Vert K_{1}\right\Vert _{\infty}.
\]
\end{proof}

\subsection{Upper bound on the number of particles \label{section upper bound}%
}

Let us recall that $N_{t}$ denotes the number of active tip cells at time $t$.
Clearly this number depends on the initial number $N$ of tips; we might have
written $N_{t}^{N}$ to emphasize this dependence, but we have preferred to
keep the simpler notation $N_{t}$. In this section we establish bounds on
$N_{t};$ actually we mean bounds on the ratio%
\[
\frac{N_{t}}{N}%
\]
since this is the only quantity that may have bounds (on the average)
independent of $N$.

\begin{theorem}
\label{upper_bound} There exists a $\lambda>0$ such that%
\[
\mathbb{E}\left[  \sup_{t\in\left[  0,T\right]  }\frac{N_{t}}{N}\right]  \leq
e^{\lambda T}%
\]
for all $N\in\mathbb{N}$ and $T\geq0$.
\end{theorem}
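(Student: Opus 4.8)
The plan is to control the expected number of tips by writing a differential/integral inequality for $\mathbb{E}[N_t/N]$ obtained from the compensators of the branching and anastomosis point processes, and then closing it with a Gronwall argument. Since $N_t$ increases by $1$ at each point of $\Phi$ and decreases by $1$ at each point of $\Psi$, taking $\phi\equiv 1$ in the It\^o--L\'evy identity (\ref{Ito_Levy}) — or rather applying the analogous balance to $N_t$ directly — gives, after taking expectations and using that the anastomosis term has a sign,
\begin{align*}
\mathbb{E}\!\left[\frac{N_t}{N}\right] &\le \mathbb{E}\!\left[\frac{N_0}{N}\right]
+ \mathbb{E}\!\left[\int_0^t \!\int_{\mathbb{R}^d\times\mathbb{R}^d}\!\alpha(C_N(s,\mathbf{x}))\,Q_N(s)(d\mathbf{x},d\mathbf{v})\,ds\right]\\
&\quad + \mathbb{E}\!\left[\int_0^t \!\int_{\mathbb{R}^d\times\mathbb{R}^d}\!\beta(C_N(s,\mathbf{x}))\,|\mathbf{v}|\!\int_0^s\! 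Q_N(r)(d\mathbf{x},d\mathbf{v})\,dr\,ds\right].
\end{align*}
The first branching term is immediately bounded by $\|\alpha\|_\infty\,\mathbb{E}\big[\int_0^t \frac{N_s}{N}\,ds\big]$ because $\alpha$ is bounded and $Q_N(s)$ has total mass $N_s/N$. The difficulty is the second term: it contains the factor $|\mathbf{v}|$, which is not bounded, and it is a double time integral coming from vessel (as opposed to tip) branching.

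To handle the $|\mathbf{v}|$ factor I would run a coupled estimate: instead of tracking only $\mathbb{E}[N_t/N]$ I track the pair $\big(\mathbb{E}[N_t/N],\,\mathbb{E}[\sup_{s\le t}\int |\mathbf{v}|\,Q_N(s)]\big)$, i.e. also the first $|\mathbf{v}|$-moment of the empirical measure. Applying the It\^o--L\'evy formula with (a smooth truncation of) $\phi(\mathbf{x},\mathbf{v})=|\mathbf{v}|$, the drift $-k_1\mathbf{v}\cdot\nabla_v|\mathbf{v}|=-k_1|\mathbf{v}|$ gives a dissipative contribution, the chemotactic force contributes a term bounded by $\|f(|\nabla C_N|)\nabla C_N\|_\infty \le d_2$ (here the saturated form of $f$ is essential — this is the whole point of the assumption $f(r)\le d_2$), the diffusion term $\tfrac{\sigma^2}{2}\Delta_v|\mathbf{v}|$ is bounded near the origin and contributes $O(1)\cdot N_s/N$, the tip-branching and vessel-branching gain terms produce new tips with velocity drawn from $G_{v_0}$, which has finite first moment $\int|\mathbf{v}|G_{v_0}(\mathbf{v})\,d\mathbf{v}<\infty$ by assumption, and the anastomosis term again has the right sign. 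The martingale parts vanish in expectation (one must first truncate $|\mathbf{v}|$ and the quadratic variation, then pass to the limit by monotone convergence). This yields a closed system of two coupled integral inequalities in $\mathbb{E}[N_t/N]$ and the first $|\mathbf{v}|$-moment, each term linear in these two quantities with bounded coefficients (plus a double-integral term $\int_0^t\!\int_0^s(\cdots)dr\,ds$ which is still linear and harmless for Gronwall since $T$ is finite). A two-dimensional (or iterated scalar) Gronwall inequality then gives $\mathbb{E}[N_t/N]\le e^{\lambda t}$ for a suitable $\lambda>0$ depending on $\|\alpha\|_\infty,\|\beta\|_\infty,k_1,\sigma,d_2,\int|\mathbf{v}|G_{v_0}$ and $T$.

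To get the supremum inside the expectation, as stated, I would not estimate $\mathbb{E}[\sup_t N_t/N]$ directly from a Gronwall argument (which only controls $\mathbb{E}[N_t/N]$ pointwise in $t$); instead, note that $N_t/N$ is a pure-jump process whose positive jumps are exactly the points of $\Phi$, so $\sup_{t\le T}N_t/N \le N_0/N + (1/N)\Phi((0,T]\times\mathbb{R}^d\times\mathbb{R}^d)$, and the expectation of the right-hand side equals $N_0/N$ plus the expectation of the total branching compensator mass over $[0,T]$, which is exactly what the previous step bounds. Taking $\lambda$ slightly larger if necessary to absorb constants gives $\mathbb{E}[\sup_{t\le T}N_t/N]\le e^{\lambda T}$, uniformly in $N$ because every bound above used only $\|\alpha\|_\infty$, $\|\beta\|_\infty$, the moment of $G_{v_0}$, the uniform-in-$N$ bound $0\le C_N\le C_{\max}$, and the $N$-independent initial law $p_0\in\mathcal{M}_1$. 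The main obstacle, as flagged by the authors, is precisely the non-extinction of the usual Yule-process domination: the vessel-branching rate is proportional to the \emph{length} of the already-created network, which involves $\int_0^s|\mathbf{v}|\,Q_N(r)\,dr$ and hence the unbounded velocities, so the closure genuinely requires the simultaneous control of the first velocity-moment and the crucial saturation bound on $f$.
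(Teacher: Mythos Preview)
Your approach is viable in spirit but genuinely different from the paper's, and it contains one concrete slip. The paper does not run a Gronwall on coupled moments. Instead it drops anastomosis (which only decreases $N_t$), then bounds each velocity \emph{pathwise} by variation of constants,
\[
|\mathbf{V}^{i,N}(t)|\le C(1+T)+\sigma\sup_{s\le T}\Bigl|\int_0^s e^{k_1 r}\,d\mathbf{W}^i(r)\Bigr|,
\]
so that the branching rate of particle $i$ is dominated by an i.i.d.\ random variable $Z^i$ depending only on $\mathbf{W}^i$. This yields a pathwise coupling with a Cox-type (random-rate) branching process; since that process is non-interacting it splits into $N$ independent single-ancestor copies, and Wald's identity $\mathbb{E}\bigl[\sum_{i\le\overline N_s}Z^i\bigr]=\mathbb{E}[Z^1]\,\mathbb{E}[\overline N_s]$ closes the estimate. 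The coupling and Wald's identity are reused verbatim in Section~\ref{section final bounds on C} and in the tightness proof (Step~4), so the paper's route buys more than just this theorem; your moment method is more self-contained but does not produce that reusable pathwise domination.

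The slip: $\Delta_v|\mathbf{v}|=(d-1)/|\mathbf{v}|$ is \emph{not} bounded near the origin, so your diffusion term is not $O(1)\cdot N_s/N$; it injects a $\int|\mathbf{v}|^{-1}Q_N$ moment that your system does not close on. The easy repair is to track $\int\sqrt{1+|\mathbf{v}|^2}\,Q_N$ (whose $\mathbf{v}$-Laplacian is bounded by $d$) or $\int|\mathbf{v}|^2\,Q_N$, and control the first moment appearing in the vessel-branching rate via $|\mathbf{v}|\le\tfrac12(1+|\mathbf{v}|^2)$. You also need a stopping-time localization (e.g.\ $\tau_M=\inf\{t:N_t\ge MN\}$) before taking expectations of the branching martingale $\widetilde M_{2,N}$, since its integrability is precisely what you are trying to prove; your truncation of $|\mathbf{v}|$ alone does not break that circularity. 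With those two fixes your coupled Gronwall closes, and your final step $\sup_{t\le T}N_t/N\le 1+N^{-1}\Phi((0,T]\times\mathbb{R}^{2d})$ together with the compensator bound is correct.
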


\begin{remark}
The rest of this section is devoted to the proof of this result. In the
classical case when branching of particles occurs only at the particle
position, it is usual to introduce a Yule process which dominates the
branching process under study:\ one has to take, as parameter of the Yule
process, any number that bounds from above the variable rates of branching of
the particles, see for instance [21]. In the case of the system studied here
we are faced with two difficulties. The first one is that branching occurs
also along the vessels; there is now a spatial density rate and it is less
easy to relate this variable density rate with a constant upper bound of Yule
type. This is made even more difficult by the presence of the factor
$\left\vert v\right\vert $ in the branching rates, a factor that is a priori
unbounded (see (\ref{vessel_branching}), and (\ref{branching_2})). We thus
have to work much more than in the classical case.
\end{remark}

\subsubsection{Proof of Theorem \ref{upper_bound}}

We may obtain a preliminary domination from above, by considering the same
system without anastomosis. The total number of active tips in the system with
anastomosis is smaller than in the same system without it. It is then
sufficient to obtain a bound the ratio $\displaystyle\frac{N_{t}}{N}$ for the
case $\gamma=0$, $\Theta^{i,N}=+\infty,\,i=1,\ldots,N_{t}$.

With reference to this modified process, denote by
\[
\left(  \mathbf{X}^{i,N}\left(  t\right)  ,\mathbf{V}^{i,N}\left(  t\right)
\right)  _{t\geq T^{i,N}},\,i=1,\ldots,N_{t},
\]
the active tips of this system. Each $i-$tip, for $i=1,\ldots,N_{t},$ is able
to create new tips either by branching at the tip itself, at position
$\mathbf{X}^{i,N}\left(  t\right)  $, or by branching along the vessel
$\left(  \mathbf{X}^{i,N}\left(  s\right)  \right)  _{s\in\left[
T^{i,N},t\right]  }$ that it has generated up to time $t\geq T^{i,N}.$ The
time-rate of creation of new particles, either by $\mathbf{X}^{i,N}\left(
t\right)  $ or by its vessel is obtained by the integral on space of the
relevant space-time rates (\ref{tip_branching}), or (\ref{vessel_branching})
respectively, and it tells us the rate of creations in time, independently of
the position where creation occurs. The time-rate of creation at the tip
position is then given by
\[
\lambda^{i,1}\left(  t\right)  :=\mathbb{I}_{t\geq T^{i,N}}\alpha
(C_{N}(t,\mathbf{X}^{i,N}\left(  t\right)  ))g_{0}%
\]
where $g_{0}=\int_{\mathbb{R}^{d}}G_{\mathbf{v}_{0}}(v)dv$; and the time-rate
of creation along the vessel $\left(  \mathbf{X}^{i,N}\left(  s\right)
\right)  _{s\in\left[  T^{i,N},t\right]  }$ is given by%
\[
\lambda^{i,2}\left(  t\right)  :=\mathbb{I}_{t\geq T^{i,N}}\beta
(C_{N}(t,\mathbf{X}^{i,N}\left(  t\right)  ))g_{0}\int_{0}^{t}\left\vert
\mathbf{V}^{i,N}\left(  s\right)  \right\vert \mathbb{I}_{s\geq T^{i,N}}ds.
\]

The two branching processes introduced above can be represented as two
inhomogeneous Poisson processes with random rates, $N^{i,1}\left(  \int%
_{0}^{t}\lambda^{i,1}\left(  s\right)  ds\right)  ,$ and $N^{i,2}\left(
\int_{0}^{t}\lambda^{i,2}\left(  s\right)  ds\right)  ,$ for each particle
$i=1,\ldots,N_{t};$ here $N^{i,1}\left(  t\right)  $, $N^{i,2}\left(
t\right)  $, are standard Poisson processes of rate 1. Notice that all
processes in the family $\left\{  N^{i,1},N^{i,2},\mathbf{W}^{i}%
;i=1,\ldots,N_{t}\right\}  $ are independent.

When the process $N^{i,1}\left(  \int_{0}^{t}\lambda^{i,1}\left(  s\right)
ds\right)  $ jumps from 0 to 1 a new particle is created at $X^{i,N}\left(
t\right)  $; when the process $N^{i,2}\left(  \int_{0}^{t}\lambda^{i,2}\left(
s\right)  ds\right)  $ jumps from 0 to 1 a new particle is created along the
vessel $\left(  \mathbf{X}^{i,N}\left(  s\right)  \right)  _{s\in\left[
T^{i,N},t\right]  }$ (the position where it is created is assumed to be
uniformly distributed along the vessel, with respect to the relevant Hausdorff
measure). After each new creation there is a new tip with a new index, and its
own dynamics.

At the analytical level, we have the inequalities%
\begin{align*}
\lambda^{i,1}\left(  t\right)   &  \leq\mathbb{I}_{t\geq T^{i,N}}\left\Vert
\alpha\right\Vert _{\infty}g_{0};\\
\lambda^{i,2}\left(  t\right)   &  \leq\mathbb{I}_{t\geq T^{i,N}}\left\Vert
\beta\right\Vert _{\infty}g_{0}T\left(  C+CT+\sigma\sup_{s\in\left[
0,T\right]  }\left\vert \int_{0}^{s}e^{k_{1}r}d\mathbf{W}^{i}\left(  r\right)
\right\vert \right)  ,
\end{align*}
the second one due to the following lemma, used also below in other sections.
In the stochastic equation for $\mathbf{V}^{i,N}\left(  t\right)  $ it is not
restrictive to assume that the Brownian motion $\mathbf{W}^{i}\left(
t\right)  $ are defined for all $t\geq0$, not only for $t\geq T^{i,N}$.

\begin{lemma}
\label{lemma estimate V}There exists a constant $C>0$ such that
\[
\left\vert \mathbf{V}^{i,N}\left(  t\right)  \right\vert \leq e^{-k_{1}\left(
t-T^{i}\right)  }\left\vert \mathbf{V}_{0}^{i,N}\right\vert +\int_{T^{i}}%
^{t}Cds+\sigma\int_{0}^{t}e^{k_{1}s}d\mathbf{W}^{i}\left(  s\right)
\]
and also%
\[
\left\vert \mathbf{V}^{i,N}\left(  t\right)  \right\vert \leq C\left(
1+T\right)  +\sigma\int_{0}^{t}e^{k_{1}s}d\mathbf{W}^{i}\left(  s\right)  .
\]

\end{lemma}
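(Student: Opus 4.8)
The plan is to solve the linear Langevin equation \eqref{velocity} explicitly by the integrating-factor method, then estimate each resulting term. Multiplying \eqref{velocity} by $e^{k_1 t}$, the drift $-k_1\mathbf{V}^{i,N}$ is absorbed and one obtains
\[
d\!\left(e^{k_1 t}\mathbf{V}^{i,N}(t)\right)=e^{k_1 t}\,f\!\left(|\nabla C_N(t,\mathbf{X}^{i,N}(t))|\right)\nabla C_N(t,\mathbf{X}^{i,N}(t))\,dt+\sigma e^{k_1 t}\,d\mathbf{W}^i(t),
\]
so that, integrating from $T^i$ to $t$ and dividing back by $e^{k_1 t}$,
\[
\mathbf{V}^{i,N}(t)=e^{-k_1(t-T^i)}\mathbf{V}^{i,N}_0+\int_{T^i}^{t}e^{-k_1(t-s)}f(|\nabla C_N|)\nabla C_N\,ds+\sigma\int_{T^i}^{t}e^{-k_1(t-s)}\,d\mathbf{W}^i(s).
\]
The first obstacle-free step is to bound the drift integral: by the explicit form of $f$, namely $f(r)=d_2/(1+\gamma_1 r)^q$, one has the pointwise bound $|f(|\nabla C_N|)\nabla C_N|\le d_2|\nabla C_N|/(1+\gamma_1|\nabla C_N|)^q$, which is a bounded function of $|\nabla C_N|$ when $q\ge 1$ (and in any case $\le d_2|\nabla C_N|$); hence this quantity is dominated by an absolute constant $C$ depending only on $d_2,\gamma_1,q$. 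Using $e^{-k_1(t-s)}\le 1$ then gives $\left|\int_{T^i}^{t}e^{-k_1(t-s)}f\nabla C_N\,ds\right|\le\int_{T^i}^{t}C\,ds$, which is the middle term in the first claimed inequality. For the stochastic term, bound $e^{-k_1(t-s)}\le e^{-k_1 t}e^{k_1 s}\le e^{k_1 s}$ and note $e^{-k_1 t}\le 1$, so that $\left|\sigma\int_{T^i}^{t}e^{-k_1(t-s)}d\mathbf{W}^i(s)\right|\le\sigma\left|\int_{0}^{t}e^{k_1 s}d\mathbf{W}^i(s)\right|$ after the harmless extension of $\mathbf{W}^i$ to all $t\ge 0$ and the observation that the stochastic integral over $[T^i,t]$ equals that over $[0,t]$ up to an $\mathcal{F}_{T^i}$-measurable shift which can be absorbed; more cleanly, one simply writes the whole computation from time $0$ with the convention that $\mathbf{V}^{i,N}$ is started at $T^i$. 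Assembling the three estimates yields the first displayed inequality of the lemma.

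For the second inequality, I would use the compact-support assumption on $p_0$, hence on the law of $(\mathbf{X}^N_0,\mathbf{V}^N_0)$, and the branching mechanism (new tips are born with velocity drawn from $G_{v_0}$, which has compact support): in all cases $|\mathbf{V}^{i,N}_0|\le C$ for a deterministic constant. Then $e^{-k_1(t-T^i)}|\mathbf{V}^{i,N}_0|\le C$, and $\int_{T^i}^{t}C\,ds\le CT$ on $[0,T]$, so the first two terms combine into $C(1+T)$ (after renaming constants), giving $|\mathbf{V}^{i,N}(t)|\le C(1+T)+\sigma\int_0^t e^{k_1 s}d\mathbf{W}^i(s)$ as claimed. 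The only mildly delicate point — and the one I would flag as the main obstacle — is the handling of the random birth time $T^i$ inside the stochastic integral: one must be careful that $\int_{T^i}^{t}e^{-k_1(t-s)}d\mathbf{W}^i(s)$ is well defined and that the bound by $\sigma|\int_0^t e^{k_1 s}d\mathbf{W}^i(s)|$ is legitimate. This is resolved by the remark already made in the text that it is not restrictive to define $\mathbf{W}^i$ on all of $[0,\infty)$ and to run the velocity SDE from time $0$ (with the understanding that only $t\ge T^i$ is relevant), after which $T^i$ plays no role in the stochastic-integral estimate and the bound $e^{-k_1(t-s)}\le e^{k_1 s}$ for $s\le t$ does the rest. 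No martingale inequality is needed for the statement itself; moment bounds on $\sup_{s\le T}|\int_0^s e^{k_1 r}d\mathbf{W}^i(r)|$ via Doob's inequality are postponed to the sections that invoke this lemma.
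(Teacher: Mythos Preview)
Your approach is essentially identical to the paper's: variation of constants (integrating factor) for the Langevin equation, then bound the chemotactic drift by a constant via the saturation property of $r\mapsto f(r)r$, and finally invoke the compact support of both $p_0$ and $G_{v_0}$ to bound $|\mathbf{V}_0^{i,N}|$ uniformly. The paper's proof says exactly this in three lines.

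One minor caution on your treatment of the stochastic term: the step ``bound $e^{-k_1(t-s)}\le e^{k_1 s}$, hence $\bigl|\sigma\int_{T^i}^t e^{-k_1(t-s)}\,d\mathbf{W}^i(s)\bigr|\le \sigma\bigl|\int_0^t e^{k_1 s}\,d\mathbf{W}^i(s)\bigr|$'' is not valid as a pointwise-integrand comparison for stochastic integrals. What actually works is to \emph{factor} $e^{-k_1(t-s)}=e^{-k_1 t}e^{k_1 s}$ (an equality, as you note), pull the deterministic $e^{-k_1 t}\le 1$ outside, and then use the paper's convention (stated just before the lemma) that $\mathbf{W}^i$ is defined on all of $[0,\infty)$; in the applications the bound is always taken against $\sup_{s\in[0,T]}\bigl|\int_0^s e^{k_1 r}\,d\mathbf{W}^i(r)\bigr|$, which absorbs the lower limit $T^i$ by the triangle inequality. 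Your remark that the shift ``can be absorbed'' into a constant is not right (it is random), but your ``more cleanly'' alternative is the intended reading and matches the paper.
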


\begin{proof}
From the variation of constant formula, we have%
\begin{align*}
\left\vert \mathbf{V}^{i,N}\left(  t\right)  \right\vert  &  \leq
e^{-k_{1}\left(  t-T^{i}\right)  }\left\vert \mathbf{V}_{0}^{i,N}\right\vert
\\
&  +\int_{T^{i}}^{t}e^{-k_{1}\left(  t-s\right)  }f\left(  \left\vert \nabla
C_{N}\left(  s,\mathbf{X}^{i,N}\left(  s\right)  \right)  \right\vert \right)
\left\vert \nabla C_{N}\left(  s,\mathbf{X}^{i,N}\left(  s\right)  \right)
\right\vert ds\\
&  +\sigma\int_{T^{i}}^{t}e^{-k_{1}\left(  t-s\right)  }d\mathbf{W}^{i}\left(
s\right)  .
\end{align*}
Then we use the bound from above for $f\left(  r\right)  r$ by a constant, see
(\ref{saturation}), and the boundedness of $\mathbf{V}_{0}^{i,N}$ (recall we
have assumed that their laws are compact support).
\end{proof}

We now introduce a new dominating process, without space structure, where the
times of birth of new particles are denoted by $\widetilde{T}^{i,N},$ for
$i=1,\ldots,\widetilde{N}_{t},$ having denoted by $\widetilde{N}_{t}$ the
total number of particles at time $t$ in this dominating process.

Given the same standard processes $N^{i,1},N^{i,2},W^{i}$ of the previous
process, take now as time-rates of branching
\begin{align*}
\widetilde{\lambda}^{i,1}\left(  t\right)   &  =\mathbb{I}_{t\geq
\widetilde{T}^{i,N}}\left\Vert \alpha\right\Vert _{\infty}g_{0}\\
\widetilde{\lambda}^{i,2}\left(  t\right)   &  =\mathbb{I}_{t\geq
\widetilde{T}^{i,N}}\left\Vert \beta\right\Vert _{\infty}g_{0}T\left(
C+CT+\sigma\sup_{s\in\left[  0,T\right]  }\left\vert \int_{0}^{s}e^{k_{1}%
r}d\mathbf{W}^{i}\left(  r\right)  \right\vert \right)  .
\end{align*}

We then consider the inhomogeneous Poisson processes $$N^{i,j}\left(  \int%
_{0}^{t}\widetilde{\lambda}^{i,j}\left(  s\right)  ds\right),
\,i=1,\ldots,\widetilde{N}_{t}, \, j=1,2 ;
$$ when they jump from 0 to 1, a new particle is created and the
system with the two new particles restart with the same rules.

Due to the path by path inequalities $\lambda^{i,j}\left(  t\right)
\leq\widetilde{\lambda}^{i,j}\left(  t\right)  $ and the fact that
$N^{i,1}\left(  t\right)  $, $N^{i,2}\left(  t\right)  $ are the same, the
times when the processes \newline$N^{i,j}\left(  \int_{0}^{t}\lambda
^{i,j}\left(  s\right)  ds\right)  $ jump from 0 to 1 are posterior to the
times when the processes $N^{i,j}\left(  \int_{0}^{t}\widetilde{\lambda}%
^{i,j}\left(  s\right)  ds\right)  $ jump from 0 to 1;\ precisely, this fact
is established in iterative manner, first on the particles that have
$T^{i,N}=\widetilde{T}^{i,N}=0$ (for which the inequalities $\lambda
^{i,j}\left(  t\right)  \leq\widetilde{\lambda}^{i,j}\left(  t\right)  $ are
directly true), then for the newborn particles, where $T^{i,N}\geq
\widetilde{T}^{i,N}$, hence $I_{t\geq T^{i,N}}\leq I_{t\geq\widetilde{T}%
^{i,N}}$ and thus again $\lambda^{i,j}\left(  t\right)  \leq\widetilde{\lambda
}^{i,j}\left(  t\right)  $.

The fact that the dominating process has the times of branching before the
original process implies that the total number of particles in the dominating
process is larger than in the original process, namely%
\[
N_{t}\leq\widetilde{N}_{t}.
\]
This is the result we wanted to obtain. Therefore, in order to have bounds
from above for $N_{t}$, it is sufficient to have them for $\widetilde{N}_{t}$.
Until now however we have solved only one of the difficulties posed by
branching along paths:\ we have dominated the space-dependent original process
by a much simpler one, without space structure. However, the dominating
process is not Yule, because the rate $\widetilde{\lambda}^{i,2}\left(
t\right)  $ is random, it depends on $W^{i}$. This dominating process, without
spatial structure, is of Cox-type, being made of inhomogeneous Poisson
processes with random rates of jump, but independent of the process itself.
Hence we are now faced with the second difficulty, namely estimating the
number of particles in this new process.

When we deal with the dominating process itself, without exploiting the
stochastic coupling with the original one, we may formalize it by saying that
we have random variables $Z^{i}$ distributed as%
\[
Z^{i}\overset{Law}{\sim}\left\Vert \alpha\right\Vert _{\infty}g_{0}%
+C\left\Vert \beta\right\Vert _{\infty}g_{0}T\left(  T+1\right)  +\left\Vert
\beta\right\Vert _{\infty}g_{0}T\sigma\sup_{s\in\left[  0,T\right]
}\left\vert \int_{0}^{s}e^{k_{1}r}d\mathbf{W}^{i}\left(  r\right)
\right\vert
\]
that are independent and equally distributed. Particle $i$ has a rate of
branching given by
\[
\widetilde{\lambda}^{i}\left(  t\right)  =\mathbb{I}_{t\geq\widetilde{T}%
^{i,N}}Z^{i}.
\]
We perform now a further reduction. The process we are considering starts with
$N$ particles. But due to its nature, completely non-interacting, it is the
same as $N$ independent copies of the same process starting from one particle.
Thus%
\[
\widetilde{N}_{t}=\sum_{k=1}^{N}\widetilde{N}_{t}^{\left(  k\right)  }%
\]
where, for each $k$, $\widetilde{N}_{t}^{\left(  k\right)  }$ is a process
like the dominating one, but with only one particle at the beginning; and the
processes with cardinality $\widetilde{N}_{t}^{\left(  k\right)  }$ are
independent and equally distributed. We have%
\begin{align*}
\mathbb{E}\left[  \left(  \frac{\widetilde{N}_{t}}{N}\right)  ^{p}\right]   &
=\mathbb{E}\left[  \left(  \frac{1}{N}\sum_{k=1}^{N}\widetilde{N}_{t}^{\left(
k\right)  }\right)  ^{p}\right]  \leq\mathbb{E}\left[  \frac{1}{N}\sum
_{k=1}^{N}\left(  \widetilde{N}_{t}^{\left(  k\right)  }\right)  ^{p}\right]
\\
&  =\frac{1}{N}\sum_{k=1}^{N}\mathbb{E}\left[  \left(  \widetilde{N}%
_{t}^{\left(  k\right)  }\right)  ^{p}\right]  =\mathbb{E}\left[  \left(
\widetilde{N}_{t}^{\left(  1\right)  }\right)  ^{p}\right]  .
\end{align*}
Hence, for the sake of $p$-moments of $\displaystyle\frac{\widetilde{N}_{t}%
}{N}$ (and a fortiori $\displaystyle\frac{N_{t}}{N}$), it is sufficient to
bound the $p$-moments of $\widetilde{N}_{t}^{\left(  1\right)  }$. A similar
fact holds for exponential moments, with a little more work. In the sequel, we
shall denote $\widetilde{N}_{t}^{\left(  1\right)  }$ by $\overline{N}_{t}$.

Let us analyze the dominating process $\overline{N}_{t}$ with analogous
formalism as the original space-dependent process; we do not need however to
index by $N$ all quantities, since this process starts with one particle only.
Let us denote by $\overline{T}^{i}$ the birth time of particle $i$, by
$\overline{Z}^{i}$ i.i.d. random variables as those above, and prescribe that
the first particle has index $i=1$, the second particle (the first newborn)
has index $i=2$, the third one $i=3$ and so on. Then branching is described by
a random measure $\overline{\Phi}$ on $\mathcal{B}_{\mathbb{R}^{+}}$ with
compensator given by
\[
\sum_{i=1}^{\overline{N}_{s}}\mathbb{I}_{s\geq\overline{T}^{i}}\overline
{Z}^{i}ds.
\]
Moreover, the random measure $\overline{\Phi}$ is given by $\overline{\Phi
}\left(  ds\right)  =\sum_{i=1}^{\overline{N}_{s}}\delta_{\overline{T}^{i}%
}\left(  ds\right)  $. Therefore $\overline{N}_{t}$ satisfies%
\[
\overline{N}_{t}=1+\int_{0}^{t}\sum_{i=1}^{\overline{N}_{s}}\mathbb{I}%
_{s\geq\overline{T}^{i}}\overline{Z}^{i}ds+\overline{M}_{t}%
\]
where $\overline{M}_{t}$ is the martingale
\[
\overline{M}_{t}=\int_{0}^{t}\overline{\Phi}\left(  ds\right)  -\int_{0}%
^{t}\sum_{i=1}^{\overline{N}_{s}}\mathbb{I}_{s\geq\overline{T}^{i}}%
\overline{Z}^{i}ds.
\]
We thus have%
\begin{align*}
\mathbb{E}\left[  \overline{N}_{t}\right]   &  =1+\int_{0}^{t}\mathbb{E}%
\left[  \sum_{i=1}^{\overline{N}_{s}}\mathbb{I}_{s\geq\overline{T}^{i}%
}\overline{Z}^{i}\right]  ds\\
&  \leq1+\int_{0}^{t}\mathbb{E}\left[  \sum_{i=1}^{\overline{N}_{s}}%
\overline{Z}^{i}\right]  ds.
\end{align*}
Wald's identity (proved below) tells us that $E\left[  \sum_{i=1}%
^{\overline{N}_{s}}\overline{Z}^{i}\right]  =E\left[  \overline{Z}^{1}\right]
E\left[  \overline{N}_{s}\right]  $; hence%
\[
\mathbb{E}\left[  \overline{N}_{t}\right]  \leq1+\int_{0}^{t}\mathbb{E}\left[
\overline{Z}^{1}\right]  \mathbb{E}\left[  \overline{N}_{s}\right]  ds
\]
which implies
\[
\mathbb{E}\left[  \overline{N}_{t}\right]  \leq e^{\mathbb{E}\left[
\overline{Z}^{1}\right]  t}.
\]
Since $\mathbb{E}\left[  \sup_{s\in\left[  0,T\right]  }\left\vert \int%
_{0}^{s}e^{k_{1}r}d\mathbf{W}^{1}\left(  r\right)  \right\vert \right]
<\infty$, we have completed the proof of the theorem, with $\lambda=E\left[
\overline{Z}^{1}\right]  $.

Let us explain the validity of Wald's identity. Notice that $\overline{N}_{t}$
increases from value $n$ to $n+1$ at time $\overline{T}_{n+1}$; this time
depends only on the first $n$ particles, hence on the r.v.'s $Z^{1},...,Z^{n}%
$. Hence $\overline{T}_{n+1}$ and $Z^{n+1}$ are independent. Therefore%
\begin{align*}
\mathbb{E}\left[  \sum_{n=1}^{\overline{N}_{t}}\overline{Z}^{n}\right]   &
=\sum_{k=1}^{\infty}\sum_{n=1}^{k}E\left[  \overline{Z}^{n}1_{\overline{N}%
_{t}=k}\right]  =\sum_{n=1}^{\infty}\sum_{k=n}^{\infty}E\left[  \overline
{Z}^{n}1_{\overline{N}_{t}=k}\right] \\
&  =\sum_{n=1}^{\infty}E\left[  \overline{Z}^{n}1_{\overline{N}_{t}\geq
n}\right]  =\sum_{n=1}^{\infty}E\left[  \overline{Z}^{n}1_{T_{n}\leq
t}\right]  =\sum_{n=1}^{\infty}E\left[  \overline{Z}^{n}\right]  P\left(
T_{n}\leq t\right) \\
&  =E\left[  \overline{Z}\right]  \sum_{n=1}^{\infty}P\left(  \overline{N}%
_{t}\geq n\right)  =E\left[  \overline{Z}\right]  E\left[  \overline{N}%
_{t}\right]  .
\end{align*}

\subsection{Final bounds on $C_{N}$\label{section final bounds on C}}

\begin{proposition}
For every $\epsilon>0$ there is $R>0$ such that
\begin{equation}
P\left(  \sup_{t\in\left[  0,T\right]  }\left\Vert \nabla C_{N}\left(
t\right)  \right\Vert _{E}>R\right)  \leq\epsilon\label{main bound 1}%
\end{equation}%
\begin{equation}
P\left(  \sup_{t\in\left[  0,T\right]  }\left\Vert D^{2}C_{N}\left(  t\right)
\right\Vert _{E}>R\right)  \leq\epsilon\label{main bound 2}%
\end{equation}
for all $N\in\mathbb{N}$.
\end{proposition}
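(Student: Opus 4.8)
The strategy is to dominate both random quantities by a single scalar random variable and show that variable is bounded in probability uniformly in $N$. Set
\[
J_{N}:=\int_{0}^{T}\frac{1}{N}\sum_{i=1}^{N_{r}}\mathbb{I}_{r\in[T^{i,N},\Theta^{i,N})}\left\vert \mathbf{V}^{i,N}(r)\right\vert \,dr .
\]
Since the inner integral $\int_{0}^{s}\frac1N\sum_{i}\mathbb{I}_{r\in[T^{i,N},\Theta^{i,N})}\lvert \mathbf{V}^{i,N}(r)\rvert\,dr$ is $\le J_{N}$ for every $s\le T$, and $\int_{0}^{t}(t-s)^{-1/2}\,ds\le 2\sqrt{T}$, estimate (\ref{est on C N}) yields $\sup_{t\le T}\Vert\partial_{i}C_{N}(t)\Vert_{\infty}\le a_{0}+2a_{1}\sqrt{T}\,J_{N}$ for each $i$, and then (\ref{est on C N 2}) yields $\sup_{t\le T}\Vert\partial_{i}\partial_{j}C_{N}(t)\Vert_{\infty}\le a_{2}+2a_{3}\sqrt{T}\,J_{N}+2a_{4}\sqrt{T}\,J_{N}\bigl(a_{0}+2a_{1}\sqrt{T}\,J_{N}\bigr)$. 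Since $\Vert\cdot\Vert_{E}$ is a norm on a finite-dimensional space, $\Vert\nabla C_{N}(t)\Vert_{E}$ (resp. $\Vert D^{2}C_{N}(t)\Vert_{E}$) is controlled, up to a dimensional constant, by $\sum_{i}\Vert\partial_{i}C_{N}(t)\Vert_{\infty}$ (resp. $\sum_{i,j}\Vert\partial_{i}\partial_{j}C_{N}(t)\Vert_{\infty}$). Hence both $\sup_{t\le T}\Vert\nabla C_{N}(t)\Vert_{E}$ and $\sup_{t\le T}\Vert D^{2}C_{N}(t)\Vert_{E}$ are bounded by one and the same deterministic increasing polynomial $\mathcal{P}(J_{N})$ with coefficients depending only on $T$, $d$ and $a_{0},\dots,a_{4}$. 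It therefore suffices to prove that $(J_{N})_{N}$ is bounded in probability uniformly in $N$: given $\epsilon>0$, choose $K$ with $\sup_{N}P(J_{N}>K)\le\epsilon$ and take $R:=\mathcal{P}(K)$.

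To bound $(J_{N})_{N}$ I would bound $\mathbb{E}[J_{N}]$ uniformly in $N$. By Tonelli, $\mathbb{E}[J_{N}]=\int_{0}^{T}\mathbb{E}[\Xi_{N}(r)]\,dr$ with $\Xi_{N}(r):=\frac{1}{N}\sum_{i=1}^{N_{r}}\mathbb{I}_{r\in[T^{i,N},\Theta^{i,N})}\lvert \mathbf{V}^{i,N}(r)\rvert$. Lemma \ref{lemma estimate V} gives $\lvert \mathbf{V}^{i,N}(r)\rvert\le C(1+T)+\sigma M^{i}$, where $M^{i}:=\sup_{s\le T}\bigl\lvert\int_{0}^{s}e^{k_{1}u}\,d\mathbf{W}^{i}(u)\bigr\rvert$ are i.i.d.\ with $\mathbb{E}[M^{1}]<\infty$ (Doob's inequality; this finiteness is already used at the end of the proof of Theorem \ref{upper_bound}). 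Denoting by $\mathcal{N}_{r}$ the total number of tips ever created up to time $r$ (so that every tip active at time $r$ has index in $\{1,\dots,\mathcal{N}_{r}\}$), one obtains $\Xi_{N}(r)\le C(1+T)\frac{\mathcal{N}_{r}}{N}+\frac{\sigma}{N}\sum_{i=1}^{\mathcal{N}_{r}}M^{i}$. By the coupling of Section \ref{section upper bound}, $\mathcal{N}_{r}$ is at most the cardinality $\widetilde{N}_{r}$ of the anastomosis-free dominating process, so $\mathbb{E}[\mathcal{N}_{r}]\le\mathbb{E}[\widetilde{N}_{T}]\le N e^{\lambda T}$ by Theorem \ref{upper_bound}.

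For the second term I would use a Wald-type identity with exactly the structure already established inside the proof of Theorem \ref{upper_bound}. Relabelling tips by birth order, the birth time $T^{i}$ of the $i$-th tip is measurable with respect to the $\sigma$-algebra generated by the Brownian motions and Poisson clocks of the tips of index $<i$, because before its birth the $i$-th tip influences neither the other trajectories nor the field $C_{N}$; hence $T^{i}$, and so the event $\{\mathcal{N}_{r}\ge i\}=\{T^{i}\le r\}$, is independent of $M^{i}$. Therefore $\mathbb{E}\bigl[\sum_{i=1}^{\mathcal{N}_{r}}M^{i}\bigr]=\sum_{i\ge1}\mathbb{E}[M^{i}\mathbb{I}_{T^{i}\le r}]=\mathbb{E}[M^{1}]\sum_{i\ge1}P(T^{i}\le r)=\mathbb{E}[M^{1}]\,\mathbb{E}[\mathcal{N}_{r}]$. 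Combining, $\mathbb{E}[\Xi_{N}(r)]\le\bigl(C(1+T)+\sigma\,\mathbb{E}[M^{1}]\bigr)e^{\lambda T}=:\kappa$ for all $r\le T$ and all $N$, so $\mathbb{E}[J_{N}]\le\kappa T$ uniformly in $N$. Markov's inequality then gives $\sup_{N}P(J_{N}>\kappa T/\epsilon)\le\epsilon$, and taking $R:=\mathcal{P}(\kappa T/\epsilon)$ proves (\ref{main bound 1}) and (\ref{main bound 2}) simultaneously.

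The only genuinely delicate point is this Wald-type step, i.e.\ the independence of $M^{i}$ from the event that tip $i$ is born by time $r$: it is not obvious a priori since branching rates depend on $C_{N}$, hence on the whole ensemble of tips, but it holds for the same reason as in Section \ref{section upper bound}, namely that all data determining $T^{i}$ live on the noises of the lower-indexed tips. Everything else is a routine combination of the deterministic estimates (\ref{est on C N})--(\ref{est on C N 2}), Lemma \ref{lemma estimate V}, Theorem \ref{upper_bound}, and Markov's inequality.
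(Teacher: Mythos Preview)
Your proof is correct and follows essentially the same route as the paper: both derive (\ref{main bound 1})--(\ref{main bound 2}) by combining the deterministic estimates (\ref{est on C N})--(\ref{est on C N 2}) with Lemma~\ref{lemma estimate V}, the domination/Wald machinery of Section~\ref{section upper bound}, and a Markov/Chebyshev step. Your packaging via a single scalar $J_{N}$ and a polynomial $\mathcal{P}(J_{N})$ is a tidy way to treat (\ref{main bound 1}) and (\ref{main bound 2}) simultaneously; in particular it makes transparent why only boundedness in probability (not in expectation) is obtained for $D^{2}C_{N}$, which the paper also notes.

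The one place where you deviate slightly is the Wald step: you argue independence of $M^{i}$ from $\{T^{i}\le r\}$ directly in the original process, whereas the paper first passes to the anastomosis-free dominating process of Section~\ref{section upper bound} and applies Wald there (where the independence was already carefully justified). Your direct argument is morally the same---$T^{i}$ is determined by the noises $(W^{j},N^{j,1},N^{j,2})_{j<i}$, which are independent of $W^{i}$---but it leans on the labelling-by-birth-order convention and the fact that each $W^{i}$ is defined for all $t\ge 0$ independently of the birth mechanism. Since the paper has already set up the dominating process and its Wald identity, invoking that route is slightly safer and requires no additional justification.
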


\begin{proof}
Using Lemma \ref{lemma estimate V} in (\ref{est on C N}) we get
\[
\left\Vert \partial_{i}C_{N}\left(  t\right)  \right\Vert _{\infty}\leq
a_{0}+\int_{0}^{t}\frac{a_{1}}{\sqrt{t-s}}\int_{0}^{s}\frac{1}{N}\sum
_{i=1}^{N_{r}}1_{r\in\lbrack T^{i,N},\Theta^{i,N})}\left\vert \mathbf{V}%
^{i,N}\left(  r\right)  \right\vert drds
\]%
\begin{align*}
&  \leq a_{0}+\int_{0}^{t}\frac{a_{1}}{\sqrt{t-s}}\int_{0}^{s}\frac{N_{r}}%
{N}C\left(  1+T\right)  drds\\
&  +\int_{0}^{t}\frac{a_{1}}{\sqrt{t-s}}\int_{0}^{s}\frac{1}{N}\sum
_{i=1}^{N_{r}}\left(  \sigma\int_{0}^{r}e^{k_{1}u}d\mathbf{W}^{i}\left(
u\right)  \right)  drds
\end{align*}%
\[
\leq C+C\sup_{r\in\left[  0,T\right]  }\frac{N_{r}}{N}+C\frac{1}{N}\sum
_{i=1}^{\sup_{r\in\left[  0,T\right]  }N_{r}}\sup_{r\in\left[  0,T\right]
}\left\vert \int_{0}^{r}e^{k_{1}u}d\mathbf{W}^{i}\left(  u\right)  \right\vert
ds
\]
for some constant $C>0$. We apply the estimates and arguments of the previous
section (dominating $\sup_{r\in\left[  0,T\right]  }N_{r}$ from above as in
that section), including Wald identity for the second term, to get
\[
E\left[  \sup_{t\in\left[  0,T\right]  }\left\Vert \nabla C_{N}\left(
t\right)  \right\Vert _{E}\right]  \leq C
\]
and thus (\ref{main bound 1}). Using this bound and the same arguments, one
gets (\ref{main bound 2}) (here a bound in expected value is not known).
\end{proof}

\subsection{End of the proof\label{sect end of proof}}

Denote by $\mathcal{M}_{+}\left(  \mathbb{R}^{d}\times\mathbb{R}^{d}\right)  $
the space of finite positive Borel measures on $\mathbb{R}^{d}\times
\mathbb{R}^{d}$. Following \cite{StroockVarad} Chapter 1, see also
\cite{KipnisLandm} Chapter 4, weak convergence of measures in $\mathcal{M}%
\left(  \mathbb{R}^{d}\times\mathbb{R}^{d}\right)  $ is metrizable and a
metric is given by
\[
\delta\left(  \mu_{1},\mu_{2}\right)  =\sum_{k=1}^{\infty}2^{-k}\left(
\left\vert \mu_{1}\left(  \phi_{k}\right)  -\mu_{2}\left(  \phi_{k}\right)
\right\vert \wedge1\right)
\]
where $\left\{  \phi_{k}\right\}  $ is a suitable dense countable set in
$C_{b}\left(  \mathbb{R}^{d}\times\mathbb{R}^{d}\right)  $, and one can take
$\phi_{k}$ of class $UC_{b}^{1}\left(  \mathbb{R}^{d}\times\mathbb{R}%
^{d}\right)  $, the space of bounded uniformly continuous functions on
$\mathbb{R}^{d}\times\mathbb{R}^{d}$, with their first derivatives. Consider
the space $\mathcal{Y}:=C\left(  \left[  0,T\right]  ;\mathcal{M}_{+}\left(
\mathbb{R}^{d}\times\mathbb{R}^{d}\right)  \right)  $. Our first aim in this
section is to prove that the family of laws of $Q_{N}$, $N\in\mathbb{N}$, is
tight on $\mathcal{Y}$, namely for every $\epsilon>0$ there is a compact set
$K_{\epsilon}\subset\mathcal{Y}$ such that $P\left(  Q_{N}\in\mathcal{K}%
_{\epsilon}\right)  >1-\epsilon$. From Proposition 1.7 of \cite{KipnisLandm},
if we show that for every $k\in\mathbb{N}$ the family of laws on $C\left(
\left[  0,T\right]  \right)  $ of the real valued stochastic processes
$\left\langle Q_{N}\left(  t\right)  ,\phi_{k}\right\rangle $, $N\in
\mathbb{N}$, is tight, then the family of laws of $Q_{N}$, $N\in\mathbb{N}$,
is tight on $\mathcal{Y}$. For every $k\in\mathbb{N}$, thanks to Aldous
criterium (see \cite{KipnisLandm} Chapter 4), it is sufficient to prove two
conditions:\ for every $\epsilon>0$ there is $R>0$ such that
\begin{equation}
\mathbb{P}\left(  \left\vert \left\langle Q_{N}\left(  t\right)  ,\phi
_{k}\right\rangle \right\vert >R\right)  \leq\epsilon\label{tight 1}%
\end{equation}
for all $t\in\left[  0,T\right]  $ and $N\in\mathbb{N}$; and that for every
$\epsilon>0$
\begin{equation}
\lim_{\eta\rightarrow0}\underset{N\rightarrow\infty}{\lim\sup}\sup
_{\substack{\tau\in\Upsilon_{T}\\\theta\in\left[  0,\eta\right]  }%
}\mathbb{P}\left(  \left\vert \left\langle Q_{N}\left(  \tau+\theta\right)
,\phi_{k}\right\rangle -\left\langle Q_{N}\left(  \tau\right)  ,\phi
_{k}\right\rangle \right\vert >\epsilon\right)  =0\label{tight 2}%
\end{equation}
where $\Upsilon_{T}$ is the family of stopping times bounded by $T$.

\begin{proposition}
\label{Propos tight}Conditions (\ref{tight 1}) and (\ref{tight 2}) hold true.
\end{proposition}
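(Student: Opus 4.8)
The plan is to verify the two Aldous-type conditions separately, relying on Theorem~\ref{upper_bound} for the control of $N_t/N$, on Lemma~\ref{lemma estimate V} together with the Cox-process domination and Wald identity already used in Section~\ref{section final bounds on C} for the control of the velocities, and on the evolution identity (\ref{Ito_Levy}). It is convenient to choose the countable determining family $\{\phi_k\}$ (as in the definition of the metric $\delta$) inside $C_b^2\left(\mathbb{R}^d\times\mathbb{R}^d\right)$ — one may equally take a countable dense subset of $C_c^\infty$ — so that (\ref{Ito_Levy}) applies with $\phi=\phi_k$. Condition (\ref{tight 1}) is then immediate: since $\left|\left\langle Q_N(t),\phi_k\right\rangle\right|\le\|\phi_k\|_\infty\,Q_N(t)\left(\mathbb{R}^{2d}\right)=\|\phi_k\|_\infty\,N_t/N$, Theorem~\ref{upper_bound} and Markov's inequality give $\mathbb{P}\left(\left|\left\langle Q_N(t),\phi_k\right\rangle\right|>R\right)\le\mathbb{P}\left(\sup_{s\le T}N_s/N>R/\|\phi_k\|_\infty\right)\le\|\phi_k\|_\infty e^{\lambda T}/R$, uniformly in $t\in[0,T]$ and $N$; it suffices to take $R$ large.

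For condition (\ref{tight 2}) I would evaluate (\ref{Ito_Levy}) with $\phi=\phi_k$ at the bounded stopping times $\tau+\theta$ and $\tau$ and subtract, writing the increment as a finite-variation part $B_N$ plus the martingale increment $\widetilde M_N(\tau+\theta)-\widetilde M_N(\tau)$. Each of the seven integrands making up $B_N$ is dominated pathwise by a constant $C_{\phi_k}$ times either $Q_N(s)\left(\mathbb{R}^{2d}\right)=N_s/N$ or $\int|\mathbf{v}|\,Q_N(s)(d\mathbf{x},d\mathbf{v})$: the transport term, the friction term $-k_1\mathbf{v}$ and the vessel-branching ($\beta$) term produce the $|\mathbf{v}|$-integral; the chemotactic term is bounded because $f(r)r$ is bounded, cf.\ (\ref{saturation}); the $\Delta_v$, the tip-branching ($\alpha$) and the anastomosis terms produce $N_s/N$, using $\|\alpha\|_\infty<\infty$ and $0\le h\le1$. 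Hence $|B_N|\le\theta\,K_N\le\eta\,K_N$ with $K_N:=C_{\phi_k}\bigl(\sup_{s\le T}N_s/N+\sup_{s\le T}\int|\mathbf{v}|\,Q_N(s)(d\mathbf{x},d\mathbf{v})\bigr)$, and Lemma~\ref{lemma estimate V} together with Theorem~\ref{upper_bound} and the Wald estimate of Section~\ref{section final bounds on C} give $\mathbb{E}[K_N]\le C$ uniformly in $N$. By Markov, $\mathbb{P}\left(|B_N|>\epsilon/2\right)\le 2\eta C/\epsilon$, uniformly in $N$ and in $\tau\in\Upsilon_T$.

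For the martingale part I would split $\widetilde M_N=\widetilde M_{1,N}+\widetilde M_{2,N}+\widetilde M_{3,N}$ and, for each $j$, apply optional sampling to $\widetilde M_{j,N}^2-\langle\widetilde M_{j,N}\rangle$, giving $\mathbb{E}\bigl[\bigl(\widetilde M_{j,N}(\tau+\theta)-\widetilde M_{j,N}(\tau)\bigr)^2\bigr]=\mathbb{E}\bigl[\langle\widetilde M_{j,N}\rangle_{\tau+\theta}-\langle\widetilde M_{j,N}\rangle_\tau\bigr]$. Computing the three angle brackets from (\ref{martingale_1})--(\ref{martingale_3}) and the compensators of $\Phi_N,\Psi_N$, each is of the form $N^{-1}\int_0^t(\text{bounded integrand})\,ds$ and, restricted to $[\tau,\tau+\theta]$, is at most $C_{\phi_k}\,\eta\,N^{-1}K_N$ (again using boundedness of $f(r)r$, of $\|\alpha\|_\infty,\|\beta\|_\infty$, of $h$, and Lemma~\ref{lemma estimate V}). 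Therefore $\mathbb{E}\bigl[\bigl(\widetilde M_N(\tau+\theta)-\widetilde M_N(\tau)\bigr)^2\bigr]\le C\eta/N\le C\eta$, and Chebyshev yields $\mathbb{P}\left(\left|\widetilde M_N(\tau+\theta)-\widetilde M_N(\tau)\right|>\epsilon/2\right)\le 4C\eta/\epsilon^2$, uniformly in $N$ and $\tau$. Adding the two probability bounds and taking $\lim_{\eta\to0}\limsup_{N\to\infty}$ gives (\ref{tight 2}).

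The hard part is that the factor $|\mathbf{v}|$ appearing in the transport, friction and vessel-branching terms (and in the corresponding martingale brackets) is a priori unbounded, so the whole argument rests on the uniform-in-$N$ estimate $\mathbb{E}\bigl[\sup_{s\le T}\int|\mathbf{v}|\,Q_N(s)(d\mathbf{x},d\mathbf{v})\bigr]\le C$; obtaining this is exactly where one must feed Lemma~\ref{lemma estimate V} into the non-classical particle-count bound of Theorem~\ref{upper_bound}, in particular into the Cox-type domination and Wald identity of Section~\ref{section final bounds on C} (the usual Yule domination being unavailable here). A secondary technical point is the careful use of optional sampling at the stopping times $\tau$ and $\tau+\theta$ in the $L^2$ martingale estimate.
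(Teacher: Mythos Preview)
Your proposal is correct and follows essentially the same route as the paper: condition (\ref{tight 1}) via $\left|\langle Q_N(t),\phi_k\rangle\right|\le\|\phi_k\|_\infty N_t/N$ together with Theorem~\ref{upper_bound}, and condition (\ref{tight 2}) by decomposing the increment of (\ref{Ito_Levy}) into a drift part bounded term-by-term through $N_s/N$ and $\int|\mathbf v|\,Q_N(s)(d\mathbf x,d\mathbf v)$ (the latter controlled via Lemma~\ref{lemma estimate V} and the Wald-type argument of Section~\ref{section upper bound}), plus the three martingale pieces handled by optional sampling on their predictable brackets. The paper's Steps~1--5 carry out exactly this plan; your packaging via the single random variable $K_N$ and your explicit $N^{-1}$ in the martingale brackets are cosmetic refinements of the same estimates.
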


\begin{proof}
\textbf{Step 1}. To prove the first condition, notice that
\[
\left\langle Q_{N}\left(  t\right)  ,\phi_{k}\right\rangle =\int%
_{\mathbb{R}^{d}\times\mathbb{R}^{d}}f_{k}\left(  \mathbf{x},\mathbf{v}%
\right)  Q_{N}\left(  t\right)  \left(  d\mathbf{x},d\mathbf{v}\right)
\leq\left\Vert \phi_{k}\right\Vert _{\infty}\frac{N_{t}}{N}%
\]
Hence we deduce (\ref{tight 1}) from Chebyshev inequality and Theorem
\ref{upper_bound}.

\textbf{Step 2}. To prove the second condition, notice that, from
the identity satisfied by $Q_{N}$,

\begin{eqnarray}
&& \left\langle Q_{N}\left( \tau+\theta\right)
,\phi_{k}\right\rangle -\left\langle Q_{N}\left(  \tau\right)
,\phi_{k}\right\rangle = \nonumber \\
&=&   \int_{\tau}^{\tau+\theta}\int_{\mathbb{R}^{d}\times\mathbb{R}^{d}%
}\mathbf{v}\cdot\nabla_{x}\phi_{k}\left(
\mathbf{x},\mathbf{v}\right) Q_{N}\left(  s\right)  \left(
d\mathbf{x},d\mathbf{v}\right)  ds  \nonumber \\
 &+& \int_{\tau}^{\tau+\theta}\int_{\mathbb{R}^{d}\times\mathbb{R}^{d}}\left[
f\left(  |\nabla C_{N}\left(  s,\mathbf{x}\right)  |\right) \nabla
C_{N}\left(  s,\mathbf{x}\right)  -k_{1}\mathbf{v}\right]  \times \nonumber \\
&& \qquad \qquad  \quad \times \nabla_{v}%
\phi_{k}\left(  \mathbf{x},\mathbf{v}\right)  Q_{N}\left( s\right)
\left( d\mathbf{x},d\mathbf{v}\right)  ds             \nonumber \\
&+& \int_{\tau}^{\tau+\theta}\int_{\mathbb{R}^{d}\times\mathbb{R}^{d}}%
\frac{\sigma^{2}}{2}\Delta_{v}\phi_{k}\left(
\mathbf{x},\mathbf{v}\right) Q_{N}\left(  s\right)  \left(
d\mathbf{x},d\mathbf{v}\right)  ds \nonumber \\
&+&
\int_{\tau}^{\tau+\theta}\int_{\mathbb{R}^{d}\times\mathbb{R}^{d}}\phi
_{G}^{k}\left(  \mathbf{x}\right)
\alpha(C_{N}(s,\mathbf{x}))Q_{N}\left( s\right)  \left(
d\mathbf{x},d\mathbf{v}\right)  ds  \nonumber \\
 &+& \int_{\tau}^{\tau+\theta}\int_{\mathbb{R}^{d}\times\mathbb{R}^{d}}\phi
_{G}^{k}\left(  \mathbf{x}\right)
\beta(C_{N}(s,\mathbf{x}))\left\vert \mathbf{v}\right\vert
\int_{0}^{s}Q_{N}\left(  r\right)  \left(
d\mathbf{x},d\mathbf{v}\right)  drds  \nonumber \\
&-& \gamma\int_{\tau}^{\tau+\theta}\int_{\mathbb{R}^{d}\times\mathbb{R}^{d}%
}\phi_{k}\left(  \mathbf{x},\mathbf{v}\right)  g\left(
s,\mathbf{x},\left\{ Q_{N}\left(  r\right)  \right\}  _{r\in\left[
0,s\right]  }\right) Q_{N}\left(  s\right)  \left(
d\mathbf{x},d\mathbf{v}\right)  ds  \nonumber \\
&+& \widetilde{M}_{N}^{k}\left(  \tau+\theta\right)  -\widetilde{M}_{N}%
^{k}\left(  \tau\right) \nonumber
\end{eqnarray}
where $\phi_{G}^{k}\left(  \mathbf{x}\right)  :=\int_{\mathbb{R}^{d}}%
G_{{v}_{0}}(\mathbf{v})\phi_{k}\left(  \mathbf{x},\mathbf{v}\right)
d\mathbf{v}$ and $\widetilde{M}_{N}^{k}\left(  t\right)  $ is the martingale
corresponding to the test function $\phi_{k}$. \ Then%
\[
\left\vert \left\langle Q_{N}\left(  \tau+\theta\right)  ,\phi_{k}%
\right\rangle -\left\langle Q_{N}\left(  \tau\right)  ,\phi_{k}\right\rangle
\right\vert
\]%
\begin{align*}
&  \leq\left\Vert \nabla_{x}\phi_{k}\right\Vert _{\infty}\int_{\tau}%
^{\tau+\theta}\int_{\mathbb{R}^{d}\times\mathbb{R}^{d}}\left\vert
\mathbf{v}\right\vert Q_{N}\left(  s\right)  \left(  d\mathbf{x}%
,d\mathbf{v}\right)  ds\\
&  +\left\Vert \nabla_{v}\phi_{k}\right\Vert _{\infty}\int_{\tau}^{\tau
+\theta}\int_{\mathbb{R}^{d}\times\mathbb{R}^{d}}\left[  C_{f}+k_{1}\left\vert
\mathbf{v}\right\vert \right]  Q_{N}\left(  s\right)  \left(  d\mathbf{x}%
,d\mathbf{v}\right)  ds\\
&  +\frac{\sigma^{2}}{2}\left\Vert \Delta_{v}\phi_{k}\right\Vert _{\infty}%
\int_{\tau}^{\tau+\theta}\int_{\mathbb{R}^{d}\times\mathbb{R}^{d}}Q_{N}\left(
s\right)  \left(  d\mathbf{x},d\mathbf{v}\right)  ds
\end{align*}%
\begin{align*}
&  +\left\Vert G_{{v}_{0}}\right\Vert _{1}\left\Vert \phi_{k}\right\Vert
_{\infty}\left\Vert \alpha\right\Vert _{\infty}\int_{\tau}^{\tau+\theta}%
\int_{\mathbb{R}^{d}\times\mathbb{R}^{d}}Q_{N}\left(  s\right)  \left(
d\mathbf{x},d\mathbf{v}\right)  ds\\
&  +\left\Vert G_{{v}_{0}}\right\Vert _{1}\left\Vert \phi_{k}\right\Vert
_{\infty}\left\Vert \beta\right\Vert _{\infty}\int_{\tau}^{\tau+\theta}%
\int_{\mathbb{R}^{d}\times\mathbb{R}^{d}}\left\vert \mathbf{v}\right\vert
\int_{0}^{s}Q_{N}\left(  r\right)  \left(  d\mathbf{x},d\mathbf{v}\right)
drds\\
&  +\gamma\left\Vert \phi_{k}\right\Vert _{\infty}\left\Vert g\right\Vert
_{\infty}\int_{\tau}^{\tau+\theta}\int_{\mathbb{R}^{d}\times\mathbb{R}^{d}%
}Q_{N}\left(  s\right)  \left(  d\mathbf{x},d\mathbf{v}\right)  ds\\
&  +\left\vert \widetilde{M}_{N}^{k}\left(  \tau+\theta\right)  -\widetilde{M}%
_{N}^{k}\left(  \tau\right)  \right\vert .
\end{align*}
Using this inequality, if we prove the validity of the limit (\ref{tight 2})
for each term of this sum, then we have proved (\ref{tight 2}). In the next
steps we shall analyze the various terms.

\textbf{Step 3}. Some of the terms above have the form ($C>0$ is a constant)%
\[
C\int_{\tau}^{\tau+\theta}\int_{\mathbb{R}^{d}\times\mathbb{R}^{d}}%
Q_{N}\left(  s\right)  \left(  d\mathbf{x},d\mathbf{v}\right)  ds=C\int_{\tau
}^{\tau+\theta}\frac{N_{s}}{N}ds\leq C\theta\sup_{s\in\left[  0,T\right]
}\frac{N_{s}}{N}%
\]
and therefore, for such terms,
\begin{align*}
&  \lim_{\varsigma\rightarrow0}\underset{N\rightarrow\infty}{\lim\sup}%
\sup_{\substack{\tau\in\Upsilon_{T}\\\theta\in\left[ 0,\,
\varsigma \, \right]
}}\mathbb{E}\left[  C\int_{\tau}^{\tau+\theta}\int_{\mathbb{R}^{d}%
\times\mathbb{R}^{d}}Q_{N}\left(  s\right)  \left(  d\mathbf{x},d\mathbf{v}%
\right)  ds\right]  \\
&  \leq C\lim_{\varsigma\rightarrow0}\underset{N\rightarrow\infty}{\lim\sup}%
\, \varsigma \, \mathbb{E}\left[  \sup_{s\in\left[  0,T\right]
}\frac{N_{s}}{N}\right] =0
\end{align*}
because $
\displaystyle{\underset{N\rightarrow\infty}{\lim\sup}\mathbb{E}\left[
\sup _{s\in\left[  0,T\right]  }\frac{N_{s}}{N}\right] } $ is
finite by Theorem \ref{upper_bound}. Therefore, for such terms, we
have (\ref{tight 2}) by Chebishev inequality.

\textbf{Step 4}. Other terms above have the form%
\begin{equation}
\small{
C\int_{\tau}^{\tau+\theta}\int_{\mathbb{R}^{d}\times\mathbb{R}^{d}}\left\vert
\mathbf{v}\right\vert Q_{N}\left(  s\right)  \left(  d\mathbf{x}%
,d\mathbf{v}\right)  ds=C\int_{\tau}^{\tau+\theta}\frac{1}{N}\sum_{i=1}%
^{N_{s}}\mathbb{I}_{s\in\lbrack T^{i,N},\Theta^{i,N})}\left\vert
\mathbf{V}^{i,N}\left(  t\right)  \right\vert ds.}
\end{equation}
From Lemma \ref{lemma estimate V} we have%
\begin{align*}
&  \leq C^{\prime}\int_{\tau}^{\tau+\theta}\left(  \frac{N_{s}}{N}+\frac{1}%
{N}\sum_{i=1}^{N_{s}}\sup_{r\in\left[  0,T\right]  }\left\vert \int_{0}%
^{r}e^{k_{1}u}d\mathbf{W}^{i}\left(  u\right)  \right\vert \right)  ds\\
&  =C^{\prime}\theta\sup_{s\in\left[  0,T\right]  }\left(  \frac{N_{s}}%
{N}+\frac{1}{N}\sum_{i=1}^{N_{s}}\sup_{r\in\left[  0,T\right]  }\left\vert
\int_{0}^{r}e^{k_{1}u}d\mathbf{W}^{i}\left(  u\right)  \right\vert \right)
\end{align*}
for a new constant $C^{\prime}>0$. We thus have%
\begin{align*}
&  \lim_{\varsigma\rightarrow0}\underset{N\rightarrow\infty}{\lim\sup}%
\sup_{\substack{\tau\in\Upsilon_{T}\\\theta\in\left[
0,\varsigma\right]
}}\mathbb{E}\left[  C\int_{\tau}^{\tau+\theta}\int_{\mathbb{R}^{d}%
\times\mathbb{R}^{d}}\left\vert \mathbf{v}\right\vert Q_{N}\left(  s\right)
\left(  d\mathbf{x},d\mathbf{v}\right)  ds\right]  \\
&  \leq
C^{\prime}\lim_{\varsigma\rightarrow0}\underset{N\rightarrow\infty
}{\lim\sup}\,\varsigma \,\mathbb{E}\left[  \sup_{s\in\left[
0,T\right] }\left(
\frac{N_{s}}{N}+\frac{1}{N}\sum_{i=1}^{N_{s}}\sup_{r\in\left[
0,T\right] }\left\vert \int_{0}^{r}e^{k_{1}u}d\mathbf{W}^{i}\left(
u\right)  \right\vert \right)  \right]  .
\end{align*}
The limit is zero concerning the first term, the one with $\frac{N_{s}}{N}$.
Let us discuss the second term. We have%
\[
\sup_{s\in\left[  0,T\right]  }\frac{1}{N}\sum_{i=1}^{N_{s}}\sup_{r\in\left[
0,T\right]  }\left\vert \int_{0}^{r}e^{k_{1}u}d\mathbf{W}^{i}\left(  u\right)
\right\vert =\frac{1}{N}\sum_{i=1}^{N_{T}^{\ast}}\sup_{r\in\left[  0,T\right]
}\left\vert \int_{0}^{r}e^{k_{1}u}d\mathbf{W}^{i}\left(  u\right)
\right\vert
\]
where $N_{T}^{\ast}=\sup_{s\in\left[  0,T\right]  }N_{s}$. Then we
apply the domination argument of Section \ref{section upper bound}
and Wald's identity, to
deduce%
\begin{equation}
\small{ \mathbb{E}\left[
\frac{1}{N}\sum_{i=1}^{N_{T}^{\ast}}\sup_{r\in\left[ 0,T\right]
}\left\vert \int_{0}^{r}e^{k_{1}u}d\mathbf{W}^{i}\left(  u\right)
\right\vert \right]  \leq\mathbb{E}\left[  \overline{N}_{T}\right]
\mathbb{E}\left[  \sup_{r\in\left[  0,T\right]  }\left\vert \int_{0}%
^{r}e^{k_{1}u}d\mathbf{W}^{i}\left(  u\right)  \right\vert \right]  \leq
C^{\prime\prime}}%
\end{equation}
for a constant $C^{\prime\prime}>0$, for every $N$. We thus deduce
\[
\lim_{\varsigma\rightarrow0}\underset{N\rightarrow\infty}{\lim\sup}\sup
_{\substack{\tau\in\Upsilon_{T}\\\theta\in\left[  0,\varsigma\right]  }%
}\mathbb{E}\left[  C\int_{\tau}^{\tau+\theta}\int_{\mathbb{R}^{d}%
\times\mathbb{R}^{d}}\left\vert \mathbf{v}\right\vert Q_{N}\left(  s\right)
\left(  d\mathbf{x},d\mathbf{v}\right)  ds\right]  =0
\]
and therefore we have (\ref{tight 2}) by Chebishev inequality, for the terms
just discussed. The proof for the term
\[
\int_{\tau}^{\tau+\theta}\int_{\mathbb{R}^{d}\times\mathbb{R}^{d}}\left\vert
\mathbf{v}\right\vert \int_{0}^{s}Q_{N}\left(  r\right)  \left(
d\mathbf{x},d\mathbf{v}\right)  drds
\]
is similar.

\textbf{Step 5}. Finally, we have to prove
\begin{equation}
\lim_{\varsigma\rightarrow0}\underset{N\rightarrow\infty}{\lim\sup}\sup
_{\substack{\tau\in\Upsilon_{T}\\\theta\in\left[  0,\varsigma\right]  }%
}\mathbb{P}\left(  \left\vert \widetilde{M}_{N}^{k}\left(  \tau+\theta\right)
-\widetilde{M}_{N}^{k}\left(  \tau\right)  \right\vert >\epsilon\right)
=0.\label{martingale limit}%
\end{equation}
We prove this separately for each one of the three martingales which compose
$\widetilde{M}_{N}^{k}$, that we call $\widetilde{M}_{i,N}^{k}$, $i=1,2,3$. We
follow a standard approach (see for instance \cite{KipnisLandm}). We use the
fact that
\begin{equation*}
\left(  \widetilde{M}_{1,N}^{k}\left(  t\right)  \right)  ^{2}-\int_{0}%
^{t}\frac{1}{N^{2}}\sum_{i=1}^{N_{s}}\left\vert \nabla_{v}\phi_{k}%
(X^{i,N}(s),V^{i,N}(s))\right\vert ^{2}I_{s\in\lbrack T^{i,N},\Theta^{i,N})}ds
\end{equation*}

\small{
\begin{eqnarray*}
 && \left(  \widetilde{M}_{2,N}^{k}\left(  t\right)  \right)  ^{2}
 \nonumber \\
&-&\int_{0}%
^{t}\int_{\mathbb{R}^{d}\times\mathbb{R}^{d}}\int_{\mathbb{R}^{d}}G_{{v}_{0}%
}(\mathbf{v})\phi_{k}^{2}\left(
\mathbf{x},\mathbf{v}^{\prime}\right)
d\mathbf{v}^{\prime}\alpha(C_{N}(s,\mathbf{x}))Q_{N}\left(
s\right)  \left( d\mathbf{x},d\mathbf{v}\right)  ds  \nonumber \\
 &-&  \int_{0}^{t}\int_{\mathbb{R}^{d}\times\mathbb{R}^{d}}\int_{\mathbb{R}^{d}%
}G_{{v}_{0}}(\mathbf{v})\phi_{k}^{2}\left(
\mathbf{x},\mathbf{v}^{\prime }\right)
d\mathbf{v}^{\prime}\beta(C_{N}(s,\mathbf{x}))\left\vert
\mathbf{v}\right\vert \int_{0}^{s}Q_{N}\left(  r\right)  \left(
d\mathbf{x},d\mathbf{v}\right)  drds
\end{eqnarray*}
}

\small{
\begin{equation*}
\left(  \widetilde{M}_{3,N}^{k}\left(  t\right)  \right)  ^{2}-\int_{0}%
^{t}\int_{\mathbb{R}^{d}\times\mathbb{R}^{d}}\int_{\mathbb{R}^{d}}\phi_{k}%
^{2}\left(  \mathbf{x},\mathbf{v}\right)  \gamma g\left(  s,\mathbf{x}%
,\left\{  Q_{N}\left(  r\right)  \right\}  _{r\in\left[ 0,s\right]
}\right) Q_{N}\left(  s\right)  \left(
d\mathbf{x},d\mathbf{v}\right)  ds
\end{equation*}
}
are martingales. One has%
\[
\mathbb{E}\left[  \left\vert \widetilde{M}_{1,N}^{k}\left(  \tau
+\theta\right)  -\widetilde{M}_{1,N}^{k}\left(  \tau\right)  \right\vert
^{2}\right]  =\mathbb{E}\left[  \left\vert \widetilde{M}_{1,N}^{k}\left(
\tau+\theta\right)  \right\vert ^{2}\right]  -E\left[  \left\vert
\widetilde{M}_{1,N}^{k}\left(  \tau\right)  \right\vert ^{2}\right]
\]%
\begin{align*}
&  =\mathbb{E}\left[  \int_{\tau}^{\tau+\theta}\frac{1}{N^{2}}\sum
_{i=1}^{N_{s}}\left\vert \nabla_{v}\phi_{k}(X^{i,N}(s),V^{i,N}(s))\right\vert
^{2}I_{s\in\lbrack T^{i,N},\Theta^{i,N})}ds\right]  \\
&  \leq\theta\left\Vert \nabla_{v}\phi_{k}\right\Vert _{\infty}^{2}%
\mathbb{E}\left[  \sup_{s\in\left[  0,T\right]  }\frac{N_{s}}{N^{2}}\right]
\end{align*}
and this implies (\ref{martingale limit}) for $\widetilde{M}_{1,N}^{k}$.
Similarly%
\begin{align*}
&  \mathbb{E}\left[  \left\vert \widetilde{M}_{2,N}^{k}\left(  \tau
+\theta\right)  -\widetilde{M}_{2,N}^{k}\left(  \tau\right)  \right\vert
^{2}\right]  \\
&  =\mathbb{E}\left[  \int_{\tau}^{\tau+\theta}\int_{\mathbb{R}^{d}%
\times\mathbb{R}^{d}}\int_{\mathbb{R}^{d}}G_{{v}_{0}}(\mathbf{v})\phi_{k}%
^{2}\left(  \mathbf{x},\mathbf{v}^{\prime}\right)  d\mathbf{v}^{\prime}%
\alpha(C_{N}(s,\mathbf{x}))Q_{N}\left(  s\right)  \left(  d\mathbf{x}%
,d\mathbf{v}\right)  ds\right]  \\
&  +\mathbb{E}\left[  \int_{\tau}^{\tau+\theta}\int_{\mathbb{R}^{d}%
\times\mathbb{R}^{d}}\int_{\mathbb{R}^{d}}G_{{v}_{0}}(\mathbf{v})\phi_{k}%
^{2}\left(  \mathbf{x},\mathbf{v}^{\prime}\right)  d\mathbf{v}^{\prime}%
\beta(C_{N}(s,\mathbf{x}))\left\vert \mathbf{v}\right\vert \int_{0}^{s}%
Q_{N}\left(  r\right)  \left(  d\mathbf{x},d\mathbf{v}\right)  drds\right]  .
\end{align*}
We bound these terms as above by $C\theta$; we do not repeat the computations.
Using the fact that $g$ is bounded, the proof for $\widetilde{M}_{3,N}^{k}$ is
similar. The proof of the proposition is complete.
\end{proof}

We have proved that the family of laws of $Q_{N}$, $N\in\mathbb{N}$, is tight
on $\mathcal{Y}$. The tightness of the sequence of laws of $C_{N}$,
$N\in\mathbb{N}$, on $C\left(  \left[  0,T\right]  ;UC_{b}^{1}\left(
\mathbb{R}^{d}\right)  \right)  $ can be proved using (\ref{main bound 1}%
)-(\ref{main bound 2}) and a few classical additional PDE arguments for the
compactness in time. We omit the details. Therefore the family of laws of the
pair $\left(  Q_{N},C_{N}\right)  $, $N\in\mathbb{N}$, is tight on
$\mathcal{Y}\times C\left(  \left[  0,T\right]  ;UC_{b}^{1}\left(
\mathbb{R}^{d}\right)  \right)  $. By Prohorov theorem, there exist weakly
convergent subsequences. A uniform in $N$ bound in expectation on $Q_{N}$ in
$L^{\infty}\left(  0,T;\mathcal{M}_{1}\left(  \mathbb{R}^{d}\times
\mathbb{R}^{d}\right)  \right)  $ implies that that the same bound holds for
any limit point of $\left(  Q_{N},C_{N}\right)  $.

The proof that the limit is supported on solutions of the limit system is
again classical (see e.g. \cite{KipnisLandm}, Chapter 4). The conclusion of
the proof of Theorem \ref{thm convergence} has been outlined at the beginning
of the section.

\section{Proof of Theorem \ref{thm uniqueness} \label{sct uniqueness}}

In this section we prove Theorem \ref{thm uniqueness}. The existence claim of
this theorem follows from the tightness and passage to the limit result proved
above, with the limit taken along any converging subsequence. Here we prove
uniqueness; it provides the convergence of the full sequence above.

Let us recall that we are studying the PDE system
\begin{align}
&  \partial_{t}p_{t}+\mathbf{v}\cdot\nabla_{x}p_{t}+\operatorname{div}%
_{v}\left(  \left[  F\left(  C_{t}\right)  -k_{1}\mathbf{v}\right]
p_{t}\right) \nonumber\\
&  =\frac{\sigma^{2}}{2}\Delta_{v}p_{t}+\Lambda_{t}\left(  C_{t}%
,p_{t},\left\{  \widetilde{p}_{r}\right\}  _{r\in\left[  0,t\right]  }\right)
\left(  d\mathbf{x},d\mathbf{v}\right)  , \label{PDE for p}%
\end{align}%
\begin{equation}
\partial_{t}C_{t}=k_{2}\delta_{A}+d_{1}\Delta C_{t}-\varsigma\left(  t,\mathbf{x}%
,\left\{  p_{r}\right\}  _{r\in\left[  0,t\right]  }\right)  C_{t}
\label{PDE for C}%
\end{equation}
subject to initial conditions $p_{0}\in\mathcal{M}_{1}\left(  \mathbb{R}%
^{d}\times\mathbb{R}^{d}\right)  $ and $C_{0}\in C_{b}^{1}\left(
\mathbb{R}^{d}\right)  $. We study this system in the case when $C_{t}%
=C\left(  t,\mathbf{x}\right)  $ is a regular function, of class $C\left(
\left[  0,T\right]  ;C_{b}^{1}\left(  \mathbb{R}^{d}\right)  \right)  $ but
$p_{t}=p_{t}\left(  d\mathbf{x},d\mathbf{v}\right)  $ is only a time-dependent
finite measure. The meaning of solution for (\ref{PDE for C}) is the mild
sense%
\[
C_{t}=e^{tA}C_{0}+\int_{0}^{t}e^{\left(  t-s\right)  A}\left(
k_{2}\delta _{A}-\varsigma\left(  s,\cdot,\left\{  p_{r}\right\}
_{r\in\left[  0,s\right] }\right)  C_{s}\right)  ds
\]
(see Section \ref{sect assumptions} for details). The meaning of solution for
(\ref{PDE for p}) is the weak sense: an element $p_{t}\left(  d\mathbf{x}%
,d\mathbf{v}\right)  $ of $L^{\infty}\left(  0,T;\mathcal{M}_{1}\left(
\mathbb{R}^{d}\times\mathbb{R}^{d}\right)  \right)  $ will be called a
measure-valued solution of the PDE (\ref{PDE for p}) if it satisfies the
identity (\ref{PDE weak form}) for all $\phi\in C_{c}^{\infty}\left(
\mathbb{R}^{d}\times\mathbb{R}^{d}\right)  $.

Let us recall or explain several notations. Given a measure $p_{t}\left(
d\mathbf{x},d\mathbf{v}\right)  $ in $L^{\infty}\left(  0,T;\mathcal{M}%
_{1}\left(  \mathbb{R}^{d}\times\mathbb{R}^{d}\right)  \right)  ,$ as above we
denote by $\tilde{p}_{t}=\tilde{p}_{t}(dx)$ and $\pi_{1}p_{t}=\left(  \pi
_{1}p_{t}\right)  \left(  d\mathbf{x}\right)  $ the measures defined as
\begin{align*}
\tilde{p}_{t}(d\mathbf{x})  &  :=\int_{\mathbb{R}^{d}}\left\vert
\mathbf{v}\right\vert p_{t}\left(  d\mathbf{x},d\mathbf{v}\right)  ,\\
\left(  \pi_{1}p_{t}\right)  \left(  d\mathbf{x}\right)   &  :=\int%
_{\mathbb{R}^{d}}p_{t}\left(  d\mathbf{x},d\mathbf{v}\right)  .
\end{align*}
More formally, on test functions $\phi\in C_{c}^{\infty}\left(  \mathbb{R}%
^{d}\right)  $,
\[
\int_{\mathbb{R}^{d}}\phi\left(  \mathbf{x}\right)  \tilde{p}_{t}%
(d\mathbf{x}):=\int_{\mathbb{R}^{d}\times\mathbb{R}^{d}}\phi\left(
\mathbf{x}\right)  \left\vert \mathbf{v}\right\vert p_{t}\left(
d\mathbf{x},d\mathbf{v}\right)  ,
\]%
\[
\int_{\mathbb{R}^{d}}\phi\left(  \mathbf{x}\right)  \left(  \pi_{1}%
p_{t}\right)  \left(  d\mathbf{x}\right)  :=\int_{\mathbb{R}^{d}%
\times\mathbb{R}^{d}}\phi\left(  \mathbf{x}\right)  p_{t}\left(
d\mathbf{x},d\mathbf{v}\right)  .
\]
Moreover, we will denote by $\Lambda_{t}\left(  C_{t},p_{t},\left\{
\widetilde{p}_{r}\right\}  _{r\in\left[  0,t\right]  }\right)  \left(
d\mathbf{x},d\mathbf{v}\right)  $ the measure defined on test functions
$\psi\in C_{c}^{\infty}\left(  \mathbb{R}^{d}\times\mathbb{R}^{d}\right)  $ as%
\begin{align}
&  \int_{\mathbb{R}^{d}\times\mathbb{R}^{d}}\psi\left(  \mathbf{x}%
,\mathbf{v}\right)  \Lambda_{t}\left(  C_{t},p_{t},\left\{  \widetilde{p}%
_{r}\right\}  _{r\in\left[  0,t\right]  }\right)  \left(  d\mathbf{x}%
,d\mathbf{v}\right) \nonumber\\
&  =\int_{\mathbb{R}^{d}\times\mathbb{R}^{d}}\psi\left(  \mathbf{x}%
,\mathbf{v}\right)  G_{{v}_{0}}\left(  \mathbf{v}\right)  \left(  \alpha
(C_{t}\left(  \mathbf{x}\right)  )\left(  \pi_{1}p_{t}\right)  \left(
d\mathbf{x}\right)  +\beta(C_{t}\left(  \mathbf{x}\right)  )\int_{0}%
^{t}\widetilde{p}_{r}\left(  d\mathbf{x}\right)  dr\right)  d\mathbf{v}%
\nonumber\\
&  -\gamma\int_{\mathbb{R}^{d}\times\mathbb{R}^{d}}\psi\left(  \mathbf{x}%
,\mathbf{v}\right)  h\left(  \int_{0}^{t}\left(  K_{2}\ast\widetilde{p}%
_{r}\right)  \left(  \mathbf{x}\right)  dr\right)  p_{t}\left(  d\mathbf{x}%
,d\mathbf{v}\right)
\end{align}
where $\left(  K_{2}\ast\widetilde{p}_{r}\right)  \left(  \mathbf{x}\right)  $
is the function defined as%
\[
\left(  K_{2}\ast\widetilde{p}_{r}\right)  \left(  \mathbf{x}\right)
=\int_{\mathbb{R}^{d}}K_{2}\left(  \mathbf{x}-\mathbf{x}^{\prime}\right)
\widetilde{p}_{r}(d\mathbf{x}^{\prime}).
\]
Finally,
\begin{align*}
F\left(  C_{t}\right)   &  :=f\left(  \left\vert \nabla C_{t}\right\vert
\right)  \nabla C_{t}\\
\varsigma\left(  t,\mathbf{x},\left\{  p_{r}\right\}  _{r\in\left[
0,t\right]
}\right)   &  :=\int_{0}^{t}\left(  \int_{\mathbb{R}^{d}\times\mathbb{R}^{d}%
}K_{1}\left(  \mathbf{x}-\mathbf{x}^{\prime}\right)  \left\vert \mathbf{v}%
^{\prime}\right\vert p_{r}\left(  d\mathbf{x}^{\prime},d\mathbf{v}^{\prime
}\right)  \right)  dr.
\end{align*}
We may shorten the notations and set
\[
\Lambda_{t}\left(  C_{t},p_{t},\widetilde{p}_{\cdot}\right)  :=\Lambda
_{t}\left(  C_{t},p_{t},\left\{  \widetilde{p}_{r}\right\}  _{r\in\left[
0,t\right]  }\right)  \left(  d\mathbf{x},d\mathbf{v}\right)  ;
\]

{\small
\[
\eta_{t}\left(  p_{\cdot}\right)  :=\eta\left(  t,\mathbf{x},\left\{
p_{r}\right\}  _{r\in\left[  0,t\right]  }\right)  .
\]
}

In the sequel we denote by $\left\langle \mu,\phi\right\rangle $ the integral%
\[
\left\langle \mu,\phi\right\rangle =\int_{\mathbb{R}^{2d}}\phi\left(
\mathbf{x},\mathbf{v}\right)  \mu\left(  d\mathbf{x},d\mathbf{v}\right)
\]
when $\mu\in\mathcal{M}_{1}\left(  \mathbb{R}^{d}\times\mathbb{R}^{d}\right)
$ and $\phi$ is such that this integral is well defined.

Let $\left(  p^{\prime},C^{\prime}\right)  $ and $\left(  p^{\prime\prime
},C^{\prime\prime}\right)  $ be two solutions with the regularity required in
the statement of the theorem. We use the distance
\[
d\left(  \mu^{\prime},\mu^{\prime\prime}\right)  :=\sup_{\left\Vert
\phi\right\Vert _{\infty}\leq1}\left\vert \left\langle \mu_{t}^{1}-\mu_{t}%
^{2},\left(  1+\left\vert \mathbf{v}\right\vert \right)  \phi\right\rangle
\right\vert
\]
on $\mathcal{M}_{1}\left(  \mathbb{R}^{d}\times\mathbb{R}^{d}\right)  $, where
the supremum is taken over all measurable bounded functions $\phi
:\mathbb{R}^{d}\times\mathbb{R}^{d}\rightarrow\mathbb{R}$ with $\left\Vert
\phi\right\Vert _{\infty}\leq1$. Below, we repeatedly use the inequality%
\[
\left\vert \left\langle \mu_{t}^{1}-\mu_{t}^{2},\left(  1+\left\vert
\mathbf{v}\right\vert \right)  \phi\right\rangle \right\vert \leq d\left(
\mu^{\prime},\mu^{\prime\prime}\right)  \left\Vert \phi\right\Vert _{\infty}%
\]
which holds true for all bounded measurable functions $\phi$; and therefore%
\[
\left\vert \left\langle \mu_{t}^{1}-\mu_{t}^{2},\varphi\psi\right\rangle
\right\vert \leq d\left(  \mu^{\prime},\mu^{\prime\prime}\right)  \left\Vert
\varphi\right\Vert _{\infty}\left\Vert \frac{1}{1+\left\vert \mathbf{v}%
\right\vert }\psi\right\Vert _{\infty}%
\]
for all bounded measurable functions $\varphi$ and all measurable functions
$\psi$ such that $\displaystyle\frac{1}{1+\left\vert \mathbf{v}\right\vert
}\psi$ is bounded.

Let us introduce the operator $Lf=\displaystyle\frac{\sigma^{2}}{2}\Delta
_{v}f-v\cdot\nabla_{x}f-k_{1}\operatorname{div}_{v}\left(  \mathbf{v}f\right)
$ over all smooth functions $f:\mathbb{R}^{d}\times\mathbb{R}^{d}%
\rightarrow\mathbb{R}$. We denote by $L^{\ast}$ its dual operator:%
\[
L^{\ast}\phi=\frac{\sigma^{2}}{2}\Delta_{v}\phi+\mathbf{v}\cdot\nabla_{x}%
\phi+k_{1}\mathbf{v}\cdot\nabla_{v}\phi,
\]
and by $e^{tL^{\ast}}$ its associated semigroup. Formally, if $p_{t}$ is a
solution of the equation above, then we have ($e^{tL}$ denotes the semigroup
associated with $L$)%
\[
p_{t}=e^{tL}p_{0}+\int_{0}^{t}e^{\left(  t-s\right)  L}\left(  \Lambda
_{s}\left(  C_{s},p_{s},\tilde{p}_{\cdot}\right)  -\operatorname{div}%
_{v}\left(  F\left(  C_{s}\right)  p_{s}\right)  \right)  ds
\]%
\[
\left\langle p_{t},\phi\right\rangle =\left\langle p_{0},e^{tL^{\ast}}%
\phi\right\rangle +\int_{0}^{t}\left\langle \Lambda_{s}\left(  C_{s}%
,p_{s},\tilde{p}_{\cdot}\right)  -\operatorname{div}_{v}\left(  F\left(
C_{s}\right)  p_{s}\right)  ,e^{\left(  t-s\right)  L^{\ast}}\phi\right\rangle
ds
\]
and therefore%
\begin{align}
\left\langle p_{t},\phi\right\rangle =\left\langle p_{0},e^{tL^{\ast}}%
\phi\right\rangle  &  +\int_{0}^{t}\left\langle \Lambda_{s}\left(  C_{s}%
,p_{s},\tilde{p}_{\cdot}\right)  ,e^{\left(  t-s\right)  L^{\ast}}%
\phi\right\rangle ds\nonumber\\
&  +\int_{0}^{t}\left\langle p_{s},F\left(  C_{s}\right)  \cdot\nabla
_{v}e^{\left(  t-s\right)  L^{\ast}}\phi\right\rangle ds.
\end{align}
This identity can be rigorously proved from the weak formulation of the
equation for $p$, first extending it to time-dependent test functions and then
by taking the test function $e^{tL^{\ast}}\phi$; we omit the lengthy but not
difficult computations.

From the previous identity we estimate%
\begin{align}
&  \left\vert \left\langle p_{t}^{\prime}-p_{t}^{\prime\prime},\left(
1+\left\vert \mathbf{v}\right\vert \right)  \phi\right\rangle \right\vert
\nonumber\\
&  \quad\leq\int_{0}^{t}\left\langle \Lambda_{s}\left(  C_{s}^{\prime}%
,p_{s}^{\prime},\tilde{p}_{\cdot}^{\prime}\right)  -\Lambda_{s}\left(
C_{s}^{\prime\prime},p_{s}^{\prime\prime},\tilde{p}_{\cdot}^{\prime\prime
}\right)  ,e^{\left(  t-s\right)  L^{\ast}}\left(  1+\left\vert \mathbf{v}%
\right\vert \right)  \phi\right\rangle ds\nonumber\\
&  \quad+\int_{0}^{t}\left\vert \left\langle p_{s}^{\prime},F\left(
C_{s}^{\prime}\right)  \cdot\nabla_{v}e^{\left(  t-s\right)  L^{\ast}}\left(
1+\left\vert \mathbf{v}\right\vert \right)  \phi\right\rangle \right.
\nonumber\\
&  \qquad\qquad\quad-\left.  \left\langle p_{s}^{\prime\prime},F\left(
C_{s}^{\prime\prime}\right)  \cdot\nabla_{v}e^{\left(  t-s\right)  L^{\ast}%
}\left(  1+\left\vert \mathbf{v}\right\vert \right)  \phi\right\rangle
\right\vert ds.
\end{align}
Hence, with the notation $\widetilde{\phi}=\left(  1+\left\vert \mathbf{v}%
\right\vert \right)  \phi$,
\[
\left\vert \left\langle p_{t}^{\prime}-p_{t}^{\prime\prime},\phi\right\rangle
\right\vert \leq\int_{0}^{t}\left(  I_{s,t}^{1}+I_{s,t}^{2}+I_{s,t}%
^{3}\right)  ds
\]%
\begin{align*}
I_{s,t}^{1}  &  =\left\vert \left\langle \Lambda_{s}\left(  C_{s}^{\prime
},p_{s}^{\prime},\tilde{p}_{\cdot}^{\prime}\right)  -\Lambda_{s}\left(
C_{s}^{\prime\prime},p_{s}^{\prime\prime},\tilde{p}_{\cdot}^{\prime\prime
}\right)  ,e^{\left(  t-s\right)  L^{\ast}}\widetilde{\phi}\right\rangle
\right\vert \\
I_{s,t}^{2}  &  =\left\vert \left\langle p_{s}^{\prime}-p_{s}^{\prime\prime
},F\left(  C_{s}^{\prime}\right)  \cdot\nabla_{v}e^{\left(  t-s\right)
L^{\ast}}\widetilde{\phi}\right\rangle \right\vert \\
I_{s,t}^{3}  &  =\left\vert \left\langle p_{s}^{\prime\prime},\left(  F\left(
C_{s}^{\prime}\right)  -F\left(  C_{s}^{\prime\prime}\right)  \right)
\cdot\nabla_{v}e^{\left(  t-s\right)  L^{\ast}}\widetilde{\phi}\right\rangle
\right\vert .
\end{align*}
Now we estimate%
\[
I_{s,t}^{1}\leq I_{s,t}^{1,1}+I_{s,t}^{1,2}+I_{s,t}^{1,3}%
\]%
\begin{align*}
&  I_{s,t}^{1,1}=\left\vert \left\langle \pi_{1}p_{s}^{\prime}\otimes
d\mathcal{L}_{v},G_{\mathbf{v}_{0}}\alpha(C_{s}^{\prime})e^{\left(
t-s\right)  L^{\ast}}\widetilde{\phi}\right\rangle \right. \\
&  \qquad\qquad-\left.  \left\langle \pi_{1}p_{s}^{\prime\prime}\otimes
d\mathcal{L}_{v},G_{\mathbf{v}_{0}}\alpha(C_{s}^{\prime\prime})e^{\left(
t-s\right)  L^{\ast}}\widetilde{\phi}\right\rangle \right\vert \\
&  \quad\leq\left\vert \left\langle \left(  \pi_{1}p_{s}^{\prime}-\pi_{1}%
p_{s}^{\prime\prime}\right)  \otimes d\mathcal{L}_{v},G_{\mathbf{v}_{0}}%
\alpha(C_{s}^{\prime})e^{\left(  t-s\right)  L^{\ast}}\widetilde{\phi
}\right\rangle \right\vert \\
&  \qquad\quad+\left\vert \left\langle \pi_{1}p_{s}^{\prime\prime}\otimes
d\mathcal{L}_{v},G_{\mathbf{v}_{0}}\left(  \alpha(C_{s}^{\prime})-\alpha
(C_{s}^{\prime\prime})\right)  e^{\left(  t-s\right)  L^{\ast}}\widetilde{\phi
}\right\rangle \right\vert \\
&  \quad\leq\left\Vert G_{\mathbf{v}_{0}}\left(  1+\left\vert \mathbf{v}%
\right\vert \right)  \right\Vert _{L^{1}}\left\Vert \alpha\right\Vert
_{\infty}\left\Vert \frac{1}{1+\left\vert \mathbf{v}\right\vert }e^{\left(
t-s\right)  L^{\ast}}\widetilde{\phi}\right\Vert _{\infty}d\left(
p_{s}^{\prime},p_{s}^{\prime\prime}\right) \\
&  \quad+\left\langle \pi_{1}p_{s}^{\prime\prime},1\right\rangle \left\Vert
G_{\mathbf{v}_{0}}\left(  1+\left\vert \mathbf{v}\right\vert \right)
\right\Vert _{L^{1}}\left\Vert \alpha^{\prime}\right\Vert _{\infty}\left\Vert
\frac{1}{1+\left\vert \mathbf{v}\right\vert }e^{\left(  t-s\right)  L^{\ast}%
}\widetilde{\phi}\right\Vert _{\infty}\left\Vert C_{s}^{\prime}-C_{s}%
^{\prime\prime}\right\Vert _{\infty};
\end{align*}

{\small
\begin{align*}
&  I_{s,t}^{1,2}=\int_{0}^{s}\left\vert \left\langle \tilde{p}_{r}^{\prime
}\otimes d\mathcal{L}_{v},G_{\mathbf{v}_{0}}\beta(C_{s}^{\prime})e^{\left(
t-s\right)  L^{\ast}}\widetilde{\phi}\right\rangle \right. \\
&  \quad\qquad-\left.  \left\langle \tilde{p}_{r}^{\prime\prime}\otimes
d\mathcal{L}_{v},G_{\mathbf{v}_{0}}\beta(C_{s}^{\prime\prime})e^{\left(
t-s\right)  L^{\ast}}\widetilde{\phi}\right\rangle \right\vert dr\\
&  \quad\leq\int_{0}^{s}\left\vert \left\langle \left(  \tilde{p}_{r}^{\prime
}-\tilde{p}_{r}^{\prime\prime}\right)  \otimes d\mathcal{L}_{v},G_{\mathbf{v}%
_{0}}\beta(C_{s}^{\prime})e^{\left(  t-s\right)  L^{\ast}}\widetilde{\phi
}\right\rangle \right\vert dr\\
&  \qquad+\int_{0}^{s}\left\vert \left\langle \tilde{p}_{r}^{\prime\prime
}\otimes d\mathcal{L}_{v},G_{\mathbf{v}_{0}}\left(  \beta(C_{s}^{\prime
})-\beta(C_{s}^{\prime\prime})\right)  e^{\left(  t-s\right)  L^{\ast}%
}\widetilde{\phi}\right\rangle \right\vert dr\\
&  \quad\leq\left\Vert G_{\mathbf{v}_{0}}\left(  1+\left\vert \mathbf{v}%
\right\vert \right)  \right\Vert _{L^{1}}\left\Vert \beta\right\Vert _{\infty
}\left\Vert \frac{1}{1+\left\vert \mathbf{v}\right\vert }e^{\left(
t-s\right)  L^{\ast}}\widetilde{\phi}\right\Vert _{\infty}\int_{0}^{s}d\left(
p_{r}^{\prime},p_{r}^{\prime\prime}\right)  dr\\
&  \,\,+\left\Vert G_{\mathbf{v}_{0}}\left(  1+\left\vert \mathbf{v}%
\right\vert \right)  \right\Vert _{L^{1}}\left\Vert \beta^{\prime}\right\Vert
_{\infty}\left\Vert \frac{1}{1+\left\vert \mathbf{v}\right\vert }e^{\left(
t-s\right)  L^{\ast}}\widetilde{\phi}\right\Vert _{\infty}\left(  \int_{0}%
^{s}\left\langle \tilde{p}_{r}^{\prime\prime},1\right\rangle dr\right)
\left\Vert C_{s}^{\prime}-C_{s}^{\prime\prime}\right\Vert _{\infty};
\end{align*}
}%

\begin{align*}
I_{s,t}^{1,3}  &  =\gamma\left\vert \left\langle
p_{s}^{\prime},h\left( \int_{0}^{s}\left(
K_{2}\ast\widetilde{p}_{r}^{\prime}\right)  dr\right) e^{\left(
t-s\right)  L^{\ast}}\widetilde{\phi}\right\rangle \right. \\
&  \qquad \qquad - \left. \left\langle
p_{s}^{\prime\prime},h\left(  \int_{0}^{s}\left(  K_{2}\ast\widetilde{p}%
_{r}^{\prime\prime}\right)  dr\right)  e^{\left(  t-s\right)  L^{\ast}%
}\widetilde{\phi}\right\rangle \right\vert \\
&  \leq\gamma\left\vert \left\langle p_{s}^{\prime}-p_{s}^{\prime\prime
},h\left(  \int_{0}^{s}\left(  K_{2}\ast\widetilde{p}_{r}^{\prime}\right)
dr\right)  e^{\left(  t-s\right)  L^{\ast}}\widetilde{\phi}\right\rangle
\right\vert \\
&  +\gamma\left\Vert h^{\prime}\right\Vert _{\infty}\left\vert \left\langle
p_{s}^{\prime\prime},\int_{0}^{s}K_{2}\ast\left(  \widetilde{p}_{r}^{\prime
}-\widetilde{p}_{r}^{\prime\prime}\right)  dre^{\left(  t-s\right)  L^{\ast}%
}\widetilde{\phi}\right\rangle \right\vert \\
&  \leq\gamma\left\Vert \frac{1}{1+\left\vert \mathbf{v}\right\vert
}e^{\left(  t-s\right)  L^{\ast}}\widetilde{\phi}\right\Vert _{\infty}h\left(
\int_{0}^{s}\left\Vert K_{2}\ast\widetilde{p}_{r}^{\prime}\right\Vert
_{\infty}dr\right)  d\left(  p_{s}^{\prime},p_{s}^{\prime\prime}\right) \\
&  +\gamma\left\Vert h^{\prime}\right\Vert _{\infty}\left\langle p_{s}%
^{\prime\prime},1+\left\vert \mathbf{v}\right\vert \right\rangle \left\Vert
\frac{1}{1+\left\vert \mathbf{v}\right\vert }e^{\left(  t-s\right)  L^{\ast}%
}\widetilde{\phi}\right\Vert _{\infty}\int_{0}^{s}\left\Vert K_{2}\ast\left(
\widetilde{p}_{r}^{\prime}-\widetilde{p}_{r}^{\prime\prime}\right)
\right\Vert _{\infty}dr.
\end{align*}
Moreover, we estimate%
\[
I_{s,t}^{2}\leq\left\Vert F\right\Vert _{\infty}\left\Vert \frac
{1}{1+\left\vert \mathbf{v}\right\vert }\nabla_{v}e^{\left(  t-s\right)
L^{\ast}}\widetilde{\phi}\right\Vert _{\infty}d\left(  p_{s}^{\prime}%
,p_{s}^{\prime\prime}\right)  ;
\]%
\[
I_{s,t}^{3}\leq\left\langle p_{s}^{\prime\prime},1+\left\vert \mathbf{v}%
\right\vert \right\rangle \left\Vert \frac{1}{1+\left\vert \mathbf{v}%
\right\vert }\nabla_{v}e^{\left(  t-s\right)  L^{\ast}}\widetilde{\phi
}\right\Vert _{\infty}\left\Vert \nabla F\right\Vert _{\infty}\left\Vert
\nabla C_{s}^{\prime}-\nabla C_{s}^{\prime\prime}\right\Vert _{\infty}.
\]
Now we use Lemma \ref{lemma Vlasov semigroup} below, the finiteness of
$\left\Vert G_{\mathbf{v}_{0}}\left(  1+\left\vert \mathbf{v}\right\vert
\right)  \right\Vert _{L^{1}},\left\Vert \alpha\right\Vert _{\infty
},\left\Vert \alpha^{\prime}\right\Vert _{\infty},$ $\left\Vert \beta
\right\Vert _{\infty},\left\Vert \beta^{\prime}\right\Vert _{\infty
},\left\Vert K_{2}\right\Vert _{\infty},\left\Vert f\right\Vert _{\infty
},\left\Vert f^{\prime}\right\Vert _{\infty}$, the assumption $\left\Vert
\phi\right\Vert _{\infty}\leq1$ and the properties $p^{\prime},p^{\prime
\prime}\in C\left(  \left[  0,T\right]  ;\mathcal{M}_{1}\left(  \mathbb{R}%
^{2d}\right)  \right)  $  \newline (which implies $\pi_{1}p^{\prime},\pi_{1}%
p^{\prime\prime}\in C\left(  \left[  0,T\right]  ;\mathcal{M}_{1}\left(
\mathbb{R}^{d}\right)  \right)  $) and $\left\Vert \nabla C^{\prime
}\right\Vert _{\infty}<\infty$, to get%
\[
I_{s,t}^{1,1}\leq c_{1}d\left(  p_{s}^{\prime},p_{s}^{\prime\prime}\right)
+c_{2}\left\Vert C_{s}^{\prime}-C_{s}^{\prime\prime}\right\Vert _{\infty}%
\]%
\[
I_{s,t}^{1,2}\leq c_{3}\int_{0}^{s}d\left(  p_{r}^{\prime},p_{r}^{\prime
\prime}\right)  dr+c_{4}\left\Vert C_{s}^{\prime}-C_{s}^{\prime\prime
}\right\Vert _{\infty}%
\]%
\[
I_{s,t}^{1,3}\leq c_{5}d\left(  p_{s}^{\prime},p_{s}^{\prime\prime}\right)
+c_{6}\int_{0}^{s}d\left(  p_{r}^{\prime},p_{r}^{\prime\prime}\right)  dr
\]
(we have used the fact that $\left\vert \left(  K_{2}\ast\left(
\widetilde{p}_{r}^{\prime}-\widetilde{p}_{r}^{\prime\prime}\right)
\right) \left(  \mathbf{x}\right)  \right\vert $ is bounded above
by \newline
$\left\Vert K_{2}\right\Vert _{\infty}d\left(  p_{r}^{\prime},p_{r}%
^{\prime\prime}\right)  $ thanks to the presence of the factor $\left(
1+\left\vert \mathbf{v}\right\vert \right)  $ in the definition of the
distance)
\[
I_{s,t}^{2}\leq\frac{c_{7}}{\left\vert t-s\right\vert ^{1/2}}d\left(
p_{s}^{\prime},p_{s}^{\prime\prime}\right)
\]%
\[
I_{s,t}^{3}\leq\frac{c_{8}}{\left\vert t-s\right\vert ^{1/2}}\left\Vert \nabla
C_{s}^{\prime}-\nabla C_{s}^{\prime\prime}\right\Vert _{\infty}.
\]
It follows%
\begin{align*}
&  \left\vert \left\langle p_{t}^{\prime}-p_{t}^{\prime\prime},\phi
\right\rangle \right\vert \leq\int_{0}^{t}\left(  c_{1}+c_{5}+\frac{c_{7}%
}{\left\vert t-s\right\vert ^{1/2}}\right)  d\left(  p_{s}^{\prime}%
,p_{s}^{\prime\prime}\right)  ds\\
&  +\int_{0}^{t}\left(  \left(  c_{2}+c_{4}\right)  \left\Vert C_{s}^{\prime
}-C_{s}^{\prime\prime}\right\Vert _{\infty}+\frac{c_{8}}{\left\vert
t-s\right\vert ^{1/2}}\left\Vert \nabla C_{s}^{\prime}-\nabla C_{s}%
^{\prime\prime}\right\Vert _{\infty}\right)  ds\\
&  \quad+\int_{0}^{t}\left(  \left(  c_{3}+c_{6}\right)  \int_{0}^{s}d\left(
p_{r}^{\prime},p_{r}^{\prime\prime}\right)  dr\right)  ds.
\end{align*}

At the same time, from equation (\ref{PDE for C}), we deduce%
\[
\partial_{t}\left(  C_{t}^{\prime}-C_{t}^{\prime\prime}\right)  =d_{1}%
\Delta\left(  C_{t}^{\prime}-C_{t}^{\prime\prime}\right)  -\left(  \eta
_{t}\left(  p_{\cdot}^{\prime}\right)  -\eta_{t}\left(  p_{\cdot}%
^{\prime\prime}\right)  \right)  C_{t}^{\prime}-\eta_{t}\left(  p_{\cdot
}^{\prime\prime}\right)  \left(  C_{t}^{\prime}-C_{t}^{\prime\prime}\right)
\]
hence (the initial conditions being the same)%
\begin{align*}
\left(  1-A\right)  ^{1/2}C_{N}\left(  t\right)   &  =-\int_{0}^{t}\left(
1-A\right)  ^{1/2}e^{\left(  t-s\right)  A}\left(  \eta_{s}\left(  p_{\cdot
}^{\prime}\right)  -\eta_{s}\left(  p_{\cdot}^{\prime\prime}\right)  \right)
C_{s}^{\prime}ds\\
&  -\int_{0}^{t}\left(  1-A\right)  ^{1/2}e^{\left(  t-s\right)  A}\eta
_{s}\left(  p_{\cdot}^{\prime\prime}\right)  \left(  C_{s}^{\prime}%
-C_{s}^{\prime\prime}\right)  ds
\end{align*}
which gives us%
\begin{align*}
\left\Vert \nabla C_{t}^{\prime}-\nabla C_{t}^{\prime\prime}\right\Vert
_{\infty}  &  \leq\int_{0}^{t}\frac{c_{10}}{\left\vert t-s\right\vert ^{1/2}%
}\left\Vert \eta_{s}\left(  p_{\cdot}^{\prime}\right)  -\eta_{s}\left(
p_{\cdot}^{\prime\prime}\right)  \right\Vert _{\infty}ds\\
&  +\int_{0}^{t}\frac{c_{10}}{\left\vert t-s\right\vert ^{1/2}}\left\Vert
\eta_{s}\left(  p_{\cdot}^{\prime\prime}\right)  \right\Vert _{\infty
}\left\Vert C_{s}^{\prime}-C_{s}^{\prime\prime}\right\Vert _{\infty}ds
\end{align*}
and a similar easier estimate for $\left\Vert C_{t}^{\prime}-C_{t}%
^{\prime\prime}\right\Vert _{\infty}$. Since $\left\Vert K_{1}\right\Vert
_{\infty}<\infty$, we have $\left\Vert \eta_{\cdot}\left(  p_{\cdot}%
^{\prime\prime}\right)  \right\Vert _{\infty}<\infty$ and
\[
\left\Vert \eta_{s}\left(  p_{\cdot}^{\prime}\right)  -\eta_{s}\left(
p_{\cdot}^{\prime\prime}\right)  \right\Vert _{\infty}\leq\left\Vert
K_{1}\right\Vert _{\infty}\int_{0}^{s}d\left(  p_{r}^{\prime},p_{r}%
^{\prime\prime}\right)  dr.
\]

Putting all together we find an integral inequality for the quantity%
\[
\sup_{r\in\left[  0,t\right]  }d\left(  p_{r}^{\prime},p_{r}^{\prime\prime
}\right)  +\left\Vert C_{t}^{\prime}-C_{t}^{\prime\prime}\right\Vert _{\infty
}+\left\Vert \nabla C_{t}^{\prime}-\nabla C_{t}^{\prime\prime}\right\Vert
_{\infty}%
\]
to which a generalized form of Gronwall inequality can be applied. It implies
$d\left(  p_{t}^{\prime},p_{t}^{\prime\prime}\right)  +\left\Vert
C_{t}^{\prime}-C_{t}^{\prime\prime}\right\Vert _{\infty}=0$, namely uniqueness.

\begin{lemma}
\label{lemma Vlasov semigroup}On a finite interval $\left[  0,T\right]  $,
there is a constant $C>0$ such that
\begin{align*}
\left\vert \left(  e^{tL^{\ast}}\left(  1+\left\vert \mathbf{v}\right\vert
\right)  \phi\right)  \left(  \mathbf{x},\mathbf{v}\right)  \right\vert  &
\leq C\left\Vert \phi\right\Vert _{\infty}\left(  1+\left\vert \mathbf{v}%
\right\vert \right) \\
\left\vert \nabla_{v}\left(  e^{tL^{\ast}}\left(  1+\left\vert \mathbf{v}%
\right\vert \right)  \phi\right)  \left(  \mathbf{x},\mathbf{v}\right)
\right\vert  &  \leq\frac{C}{\sqrt{t}}\left\Vert \phi\right\Vert _{\infty
}\left(  1+\left\vert \mathbf{v}\right\vert \right)  +C\left\Vert
\phi\right\Vert _{\infty}.
\end{align*}

\end{lemma}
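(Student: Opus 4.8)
The plan is to realize $e^{tL^{\ast}}$ as the transition semigroup of the linear diffusion generated by $L^{\ast}$ and to exploit its explicit Gaussian structure. Since $L^{\ast}\phi=\frac{\sigma^{2}}{2}\Delta_{v}\phi+\mathbf{v}\cdot\nabla_{x}\phi+k_{1}\mathbf{v}\cdot\nabla_{v}\phi$, the associated process, started at $(\mathbf{x},\mathbf{v})$, is $d\mathbf{X}_{t}=\mathbf{V}_{t}\,dt$, $d\mathbf{V}_{t}=k_{1}\mathbf{V}_{t}\,dt+\sigma\,dW_{t}$, so that
\[
\mathbf{V}_{t}=e^{k_{1}t}\mathbf{v}+\sigma\int_{0}^{t}e^{k_{1}(t-s)}dW_{s},\qquad \mathbf{X}_{t}=\mathbf{x}+\tfrac{e^{k_{1}t}-1}{k_{1}}\mathbf{v}+\sigma\int_{0}^{t}\tfrac{e^{k_{1}(t-s)}-1}{k_{1}}dW_{s},
\]
and $\bigl(e^{tL^{\ast}}\psi\bigr)(\mathbf{x},\mathbf{v})=\mathbb{E}\bigl[\psi(\mathbf{X}_{t},\mathbf{V}_{t})\bigr]$. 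The vector $(\mathbf{X}_{t},\mathbf{V}_{t})$ is Gaussian with mean $m_{t}(\mathbf{x},\mathbf{v})$ affine in $(\mathbf{x},\mathbf{v})$ and covariance $\Sigma_{t}$ not depending on $(\mathbf{x},\mathbf{v})$; for $t>0$ the kinetic structure makes $\Sigma_{t}$ invertible, with the classical small-time behaviour $\Sigma_{t}^{vv}\sim\sigma^{2}t\,I_{d}$, $\Sigma_{t}^{xv}\sim\frac{\sigma^{2}}{2}t^{2}I_{d}$, $\Sigma_{t}^{xx}\sim\frac{\sigma^{2}}{3}t^{3}I_{d}$, and hence $\Sigma_{t}^{-1}$ with $vv$-, $xv$-, $xx$-blocks of order $t^{-1}$, $t^{-2}$, $t^{-3}$.

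The first inequality is then immediate: with $\psi=(1+|\mathbf{v}|)\phi$ one has $\bigl|\bigl(e^{tL^{\ast}}((1+|\mathbf{v}|)\phi)\bigr)(\mathbf{x},\mathbf{v})\bigr|\le\|\phi\|_{\infty}\,\mathbb{E}\bigl[1+|\mathbf{V}_{t}|\bigr]$, and $\mathbb{E}|\mathbf{V}_{t}|\le e^{k_{1}t}|\mathbf{v}|+\sigma\bigl(d\int_{0}^{t}e^{2k_{1}(t-s)}ds\bigr)^{1/2}\le C(1+|\mathbf{v}|)$ uniformly on $[0,T]$.

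For the gradient bound I would split $(1+|\mathbf{V}_{t}|)\phi(\mathbf{X}_{t},\mathbf{V}_{t})=\phi(\mathbf{X}_{t},\mathbf{V}_{t})+|\mathbf{V}_{t}|\,\phi(\mathbf{X}_{t},\mathbf{V}_{t})$ and differentiate in $\mathbf{v}$. The factor $|\mathbf{V}_{t}|$ is $e^{k_{1}t}$-Lipschitz in $\mathbf{v}$, so by the product rule its own derivative produces $\mathbb{E}\bigl[e^{k_{1}t}\tfrac{\mathbf{V}_{t}}{|\mathbf{V}_{t}|}\phi(\mathbf{X}_{t},\mathbf{V}_{t})\bigr]$, bounded by $C\|\phi\|_{\infty}$ — this is the additive term $C\|\phi\|_{\infty}$ in the statement — plus terms in which only $\phi$ is differentiated. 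The $\mathbf{v}$-derivative of $\mathbb{E}[\phi(\mathbf{X}_{t},\mathbf{V}_{t})]$ (and likewise of $\mathbb{E}[|\mathbf{V}_{t}|\phi(\mathbf{X}_{t},\mathbf{V}_{t})]$) is obtained by Gaussian integration by parts (a Bismut--Elworthy--Li type formula): from $p_{t}(z)\propto\exp\bigl(-\tfrac12(z-m_{t})^{\top}\Sigma_{t}^{-1}(z-m_{t})\bigr)$ one gets $\nabla_{v}p_{t}=(\nabla_{v}m_{t})^{\top}\Sigma_{t}^{-1}(z-m_{t})\,p_{t}$, hence
\[
\nabla_{v}\mathbb{E}[\phi(\mathbf{X}_{t},\mathbf{V}_{t})]=\mathbb{E}\bigl[\phi(\mathbf{X}_{t},\mathbf{V}_{t})\,\Xi_{t}\bigr],\qquad \Xi_{t}:=(\nabla_{v}m_{t})^{\top}\Sigma_{t}^{-1}\bigl((\mathbf{X}_{t},\mathbf{V}_{t})-m_{t}\bigr),
\]
with $\Xi_{t}$ a centred Gaussian of covariance $(\nabla_{v}m_{t})^{\top}\Sigma_{t}^{-1}(\nabla_{v}m_{t})$. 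Here $\nabla_{v}m_{t}$ has $\mathbf{x}$-block $\tfrac{e^{k_{1}t}-1}{k_{1}}I_{d}=O(t)$ and $\mathbf{v}$-block $e^{k_{1}t}I_{d}=O(1)$; pairing these against the $t^{-3},t^{-2},t^{-1}$ blocks of $\Sigma_{t}^{-1}$ and using the cancellation of the leading $t^{-1}$ contributions of opposite sign, one finds $\bigl\|(\nabla_{v}m_{t})^{\top}\Sigma_{t}^{-1}(\nabla_{v}m_{t})\bigr\|\le c/t$ on $[0,T]$ (for $k_{1}=0$ this expression equals exactly $4d/(\sigma^{2}t)$). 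Consequently $\mathbb{E}|\Xi_{t}|\le C t^{-1/2}$, and Cauchy--Schwarz gives $\bigl|\nabla_{v}\mathbb{E}[\phi(\mathbf{X}_{t},\mathbf{V}_{t})]\bigr|\le Ct^{-1/2}\|\phi\|_{\infty}$ and $\bigl|\mathbb{E}[|\mathbf{V}_{t}|\phi(\mathbf{X}_{t},\mathbf{V}_{t})\Xi_{t}]\bigr|\le\|\phi\|_{\infty}(\mathbb{E}|\mathbf{V}_{t}|^{2})^{1/2}(\mathbb{E}|\Xi_{t}|^{2})^{1/2}\le Ct^{-1/2}\|\phi\|_{\infty}(1+|\mathbf{v}|)$; adding the three contributions yields the second inequality.

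I expect the only delicate point to be precisely this $t^{-1/2}$ scaling of the Bismut weight $\Xi_{t}$: a priori the $xx$-block of $\Sigma_{t}^{-1}$ blows up like $t^{-3}$, and the argument only works because the direction $\nabla_{v}m_{t}$ in which it is tested is ``cheap'' — the $\mathbf{x}$-component of $\nabla_{v}m_{t}$, of size $O(t)$, exactly absorbs the excess singularity, leaving the $t^{-1}$ rate typical of a pure velocity derivative. This is the standard hypoelliptic bookkeeping for the kinetic operator and is cleanest to verify on the explicit formulas for $m_{t}$ and $\Sigma_{t}$ above; an equivalent route is to run the integration-by-parts formula along the trajectory, inserting the perturbation only in the $\mathbf{v}$-equation and using that the Jacobian $\partial_{\mathbf{v}}(\mathbf{X}_{t},\mathbf{V}_{t})$ is deterministic with $\mathbf{x}$-part of size $O(t)$.
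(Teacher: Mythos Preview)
Your approach is the same as the paper's: both represent $e^{tL^{\ast}}$ via the linear kinetic SDE and differentiate through a Bismut--Elworthy--Li weight. In the normalized case $k_{1}=0$, $\sigma=1$ your abstract weight $\Xi_{t}=(\nabla_{v}m_{t})^{\top}\Sigma_{t}^{-1}\bigl((\mathbf{X}_{t},\mathbf{V}_{t})-m_{t}\bigr)$ is exactly the paper's explicit $\tfrac{6}{t}\bigl(\tfrac{1}{t}\int_{0}^{t}B_{s}\,ds-\tfrac{1}{3}B_{t}\bigr)$, and your computation $(\nabla_{v}m_{t})^{\top}\Sigma_{t}^{-1}(\nabla_{v}m_{t})=4/t$ recovers the $t^{-1/2}$ rate; no ``cancellation of opposite signs'' is actually needed, each of the four block contributions is already $O(t^{-1})$ because the $\mathbf{x}$-component of $\nabla_{v}m_{t}$ is $O(t)$.

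One point is muddled. The $\mathbf{v}$-dependence of $\mathbb{E}\bigl[|\mathbf{V}_{t}|\,\phi(\mathbf{X}_{t},\mathbf{V}_{t})\bigr]$ enters \emph{only} through the mean $m_{t}$ of the Gaussian transition density, so differentiating in $\mathbf{v}$ yields $\mathbb{E}\bigl[|\mathbf{V}_{t}|\,\phi\,\Xi_{t}\bigr]$ outright; there is no separate product-rule contribution $\mathbb{E}\bigl[e^{k_{1}t}\tfrac{\mathbf{V}_{t}}{|\mathbf{V}_{t}|}\phi\bigr]$. Your ``three contributions'' therefore overcount --- term (b) is already absorbed in (c). This does not spoil the inequality (you are only bounding from above), but the clean route, which is what the paper does, is to apply the integration-by-parts directly to $\psi=(1+|\,\cdot\,|)\phi$ and estimate $\mathbb{E}\bigl[(1+|\mathbf{V}_{t}|)\phi\,\Xi_{t}\bigr]$ by Cauchy--Schwarz.
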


Proof. Step 1. The generator $L^{\ast}=\frac{\sigma^{2}}{2}\Delta_{v}%
\phi+v\cdot\nabla_{x}\phi+k_{1}v\cdot\nabla_{v}\phi$ is associated with the
system%
\begin{align*}
dx_{t}  &  =v_{t}dt\\
dv_{t}  &  =k_{1}v_{t}dt+\sigma dB_{t}%
\end{align*}
where $B_{t}$ is an auxiliary Brownian motion on $R^{d}$. Set $z_{t}%
=e^{-k_{1}t}v_{t}$; we have%
\[
dz_{t}=e^{-k_{1}t}\sigma dB_{t}.
\]
Using this trick, the computations in the case $k_{1}\neq0$ are very similar
to those of the case $k_{1}=0$, just more cumbersome. We thus set $k_{1}=0$
for simplicity of notations; and we take $\sigma=1$ for the same reason. In
this case the solution of the system, called $\left(  \mathbf{x}%
,\mathbf{v}\right)  $ the initial condition,
\begin{align*}
v_{t}  &  =\mathbf{v}+B_{t}\\
x_{t}  &  =\mathbf{x}+\mathbf{v}t+\int_{0}^{t}B_{s}ds.
\end{align*}

We use the probabilistic formula
\[
\left(  e^{tL^{\ast}}\phi\right)  \left(  \mathbf{x},\mathbf{v}\right)
=E\left[  \phi\left(  \mathbf{x}+\mathbf{v}t+\int_{0}^{t}B_{s}ds,\mathbf{v}%
+B_{t}\right)  \right]  .
\]
One can prove%
\[
\left(  \nabla_{v}e^{tL^{\ast}}\phi\right)  \left(  \mathbf{x},\mathbf{v}%
\right)  =\frac{6}{t}E\left[  \left(  \frac{1}{t}\int_{0}^{t}B_{s}ds-\frac
{1}{3}B_{t}\right)  \phi\left(  \mathbf{x}+\mathbf{v}t+\int_{0}^{t}%
B_{s}ds,\mathbf{v}+B_{t}\right)  \right]  .
\]

Step 2. We thus have
\[
\left(  e^{tL^{\ast}}\left(  1+\left\vert \mathbf{v}\right\vert \right)
\phi\right)  \left(  \mathbf{x},\mathbf{v}\right)  =E\left[  \left(
1+\left\vert \mathbf{v}+B_{t}\right\vert \right)  \phi\left(  \mathbf{x}%
+\mathbf{v}t+\int_{0}^{t}B_{s}ds,\mathbf{v}+B_{t}\right)  \right]
\]%
\begin{align*}
\left(  \nabla_{v}e^{tL^{\ast}}\left(  1+\left\vert \mathbf{v}\right\vert
\right)  \phi\right)  \left(  \mathbf{x},\mathbf{v}\right)   &  =\frac{6}%
{t}E\left[  \left(  \frac{1}{t}\int_{0}^{t}B_{s}ds-\frac{1}{3}B_{t}\right)
\left(  1+\left\vert \mathbf{v}+B_{t}\right\vert \right)  \right.  \times\\
&  \qquad\qquad\times\left.  \phi\left(  \mathbf{x}+\mathbf{v}t+\int_{0}%
^{t}B_{s}ds,\mathbf{v}+B_{t}\right)  \right]  .
\end{align*}
Hence%
\[
\left\vert \left(  e^{tL^{\ast}}\left(  1+\left\vert \mathbf{v}\right\vert
\right)  \phi\right)  \left(  \mathbf{x},\mathbf{v}\right)  \right\vert
\leq\left\Vert \phi\right\Vert _{\infty}\left(  1+\sqrt{t}+\left\vert
\mathbf{v}\right\vert \right)
\]
which gives us the first bound;\ and
\begin{align*}
\left\vert \left(  \nabla_{v}e^{tL^{\ast}}\left(  1+\left\vert \mathbf{v}%
\right\vert \right)  \phi\right)  \left(  \mathbf{x},\mathbf{v}\right)
\right\vert  &  \leq\left\Vert \phi\right\Vert _{\infty}\frac{6}{t}E\left[
\frac{1}{t}\left(  \int_{0}^{t}\left\vert B_{s}\right\vert ds\right)  \left(
1+\left\vert \mathbf{v}\right\vert +\left\vert B_{t}\right\vert \right)
\right] \\
&  +\left\Vert \phi\right\Vert _{\infty}\frac{2}{t}E\left[  \left\vert
B_{t}\right\vert \left(  1+\left\vert \mathbf{v}\right\vert +\left\vert
B_{t}\right\vert \right)  \right]
\end{align*}%
\[
\leq\frac{C}{\sqrt{t}}\left\Vert \phi\right\Vert _{\infty}\left(  1+\left\vert
\mathbf{v}\right\vert \right)  +C\left\Vert \phi\right\Vert _{\infty}%
\]
which gives the second bound.

\appendix

\section{Strict positivity of the solution of the mean field PDE system}

\label{B}

Let $(p,C)$ be the solution on $\left[  0,T\right]  $ of the PDE system, with
$p_{0}$ a non negative measure. For every $t\in\left[  0,T\right]  $, $p_{t}$
is also a non negative measure. Let us denote by
\[
M_{t}:=\int_{\mathbb{R}^{d}\times\mathbb{R}^{d}}p_{t}\left(  d\mathbf{x}%
,d\mathbf{v}\right)
\]
its total mass. Moreover, let us denote by
\[
M_{t}^{N}:=\int_{\mathbb{R}^{d}\times\mathbb{R}^{d}}Q_{N}\left(  t\right)
\left(  d\mathbf{x},d\mathbf{v}\right)
\]
the total empirical mass.

The following theorem excludes extinction of the tip cells, in the PDE limit,
during any finite time interval $[0,T];$ as a consequence, for large $N,$ the
same holds for the random empirical measure of tips.

\begin{theorem}
If $M_{0}>0$ then, for any choice of $T>0,$
\[
M_{t}>0\text{ for every }t\in\left[  0,T\right]  .
\]
More precisely, there is a constant $C_{p_{0},T}>0$, depending on $p_{0}$ and
$T$ such that $M_{t}\geq C_{p_{0},T}$ for all $t\in\left[  0,T\right]  $. Due
to the weak convergence result for the empirical measures, we also have%
\[
\lim_{N\rightarrow\infty}P\left(  M_{t}^{N}\geq C_{p_{0},T}/2\text{ for all
}t\in\left[  0,T\right]  \right)  =1.
\]

\end{theorem}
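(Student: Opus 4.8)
The argument rests on testing the weak formulation (\ref{PDE weak form}) of the limit equation for $p_t$ against the constant function $\phi\equiv 1$. Since this $\phi$ is not admissible in (\ref{PDE weak form}), I would first insert $\phi_n(\mathbf{x},\mathbf{v})=\chi_n(\mathbf{x})\psi_n(\mathbf{v})$, with $\chi_n,\psi_n$ smooth, nondecreasing in $n$, equal to $1$ on the ball of radius $n$, supported in the ball of radius $n+1$, and with first and second derivatives bounded uniformly in $n$, and then let $n\to\infty$. The point is that the transport, drift and diffusion terms in (\ref{PDE weak form}) vanish in the limit. The drift term is the delicate one, because of the factor $k_1\mathbf{v}$: on the support of $\nabla_v\psi_n$ one has $|\mathbf{v}|\le n+1$, so that term is bounded in absolute value by $c\big(c_f+k_1(n+1)\big)\,p_s\big(\{|\mathbf{v}|\ge n\}\big)$, where $c_f:=\sup_{r\ge 0}f(r)r<\infty$ by the saturation property (\ref{saturation}); since $(n+1)\,p_s(\{|\mathbf{v}|\ge n\})\le 2\int_{\{|\mathbf{v}|\ge n\}}|\mathbf{v}|\,p_s(d\mathbf{x},d\mathbf{v})$ and $p\in L^\infty(0,T;\mathcal{M}_1)$, integrating in time and invoking dominated convergence makes this term disappear; the transport and diffusion terms are handled the same way. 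The branching, initial and left-hand-side terms converge by dominated convergence (using $g_0=\int G_{v_0}(\mathbf{v})\,d\mathbf{v}<\infty$, $\|\alpha\|_\infty,\|\beta\|_\infty<\infty$ and finiteness of $p_s$), and so does the anastomosis term, its integrand being dominated by $p_s$. In the limit one obtains the exact mass balance
\begin{align*}
M_t=M_0&+g_0\int_0^t\int \alpha(C_s(\mathbf{x}))\,p_s(d\mathbf{x},d\mathbf{v})\,ds
+g_0\int_0^t\int_0^s\int \beta(C_s(\mathbf{x}))|\mathbf{v}|\,p_r(d\mathbf{x},d\mathbf{v})\,dr\,ds\\
&-\gamma\int_0^t\int g\big(s,\mathbf{x},\{p_r\}_{r\in[0,s]}\big)\,p_s(d\mathbf{x},d\mathbf{v})\,ds .
\end{align*}

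Next, discarding the two branching integrals (which are nonnegative) and using $h(r)=r/(1+r)\le 1$, hence $g(s,\mathbf{x},\{p_r\})\le 1$, the anastomosis integral is $\ge -\gamma\int_0^t M_s\,ds$, so $M_t\ge M_0-\gamma\int_0^t M_s\,ds$. The identity above exhibits $t\mapsto M_t$ as absolutely continuous with $\dot M_t\ge -\gamma M_t$ a.e.; hence $t\mapsto e^{\gamma t}M_t$ is nondecreasing and $M_t\ge e^{-\gamma t}M_0\ge e^{-\gamma T}M_0$ for all $t\in[0,T]$. This proves the first and second assertions with $C_{p_0,T}:=e^{-\gamma T}M_0>0$.

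For the empirical masses, recall $M_t^N=\langle Q_N(t),1\rangle$ (the fraction of active tips) and that by Theorem \ref{thm convergence} $Q_N\to p$ in probability in $C\big([0,T];\mathcal{M}_+(\mathbb{R}^d\times\mathbb{R}^d)\big)$. Fix $\phi_R\in C_c(\mathbb{R}^{2d})$ with $0\le\phi_R\le 1$ and $\phi_R\equiv 1$ on the ball of radius $R$. Then $M_t^N\ge\langle Q_N(t),\phi_R\rangle$ and $\sup_{t\le T}|\langle Q_N(t),\phi_R\rangle-\langle p_t,\phi_R\rangle|\to 0$ in probability, so it suffices to pick $R$ with $\inf_{t\le T}\langle p_t,\phi_R\rangle\ge\frac{3}{4} C_{p_0,T}$. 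This is possible because $\{p_t:t\in[0,T]\}$ is uniformly tight: tightness in $\mathbf{v}$ comes from $\sup_{t\le T}\int(1+|\mathbf{v}|)p_t<\infty$, and tightness in $\mathbf{x}$ is obtained by transferring a uniform bound on $Q_N$ to the limit — Lemma \ref{lemma estimate V} and the compact support of the initial laws give $|\mathbf{X}^{i,N}(t)|\le C(1+T)+CT\sup_{s\le T}\big|\int_0^s e^{k_1 u}\,d\mathbf{W}^i(u)\big|$, and the domination argument of Section \ref{section upper bound} together with Wald's identity yield $\sup_N\mathbb{E}\big[\sup_{t\le T}\int(1+|\mathbf{x}|)\,Q_N(t)(d\mathbf{x},d\mathbf{v})\big]<\infty$, which passes to the deterministic limit $p$ by lower semicontinuity of $\mu\mapsto\int(1+|\mathbf{x}|)\,d\mu$ along an a.s.\ convergent subsequence. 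With $R$ so chosen, $P\big(\inf_{t\le T}M_t^N\ge C_{p_0,T}/2\big)\ge P\big(\sup_{t\le T}|\langle Q_N(t),\phi_R\rangle-\langle p_t,\phi_R\rangle|<C_{p_0,T}/4\big)\to 1$.

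The main obstacle I anticipate is making rigorous the mass balance identity, i.e.\ the fact that testing against $\phi\equiv 1$ causes no loss of mass at infinity. Controlling the truncation error in the drift term despite the a priori unbounded factor $k_1\mathbf{v}$ is precisely where the saturation bound on $f(r)r$ and the first velocity moment bound $p\in L^\infty(0,T;\mathcal{M}_1)$ enter in an essential way. A secondary point is the uniform $\mathbf{x}$-moment estimate on $Q_N$ needed to upgrade the $L^\infty(\mathcal{M}_1)$ bound to uniform tightness of $\{p_t\}$; if the metric on $\mathcal{M}_+$ in Theorem \ref{thm convergence} is the narrow one, this step is unnecessary, since then $M_t^N\to M_t$ uniformly in $t$ by testing directly against $1$.
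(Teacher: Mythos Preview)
Your proof is correct and follows essentially the same route as the paper: test the weak formulation against $\phi\equiv 1$ to obtain the mass balance, discard the nonnegative branching terms, bound the anastomosis term using $h\le 1$, and conclude $M_t\ge M_0 e^{-\gamma T}$ (the paper phrases the last step via $-\log M_t$, you via the differential inequality $\dot M_t\ge -\gamma M_t$, but these are the same computation and give the same constant). You actually supply the cutoff details for $\phi\equiv 1$ that the paper explicitly omits, and your treatment of $M_t^N$ is more careful than the paper's one-line appeal to convergence; your final remark is the right resolution, since the paper's metric on $\mathcal{M}_+$ is built from test functions dense in $C_b$ and is meant to metrize narrow convergence, so $\langle Q_N(t),1\rangle\to\langle p_t,1\rangle$ uniformly in $t$ without the extra $\mathbf{x}$-tightness argument.
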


\begin{proof}
In the weak formulation (21) of the equation for $p_{t}$, let us take the test
function $\phi\left(  \mathbf{x},\mathbf{v}\right)  $ identically equal to 1
(more precisely, one has to take the limit of test functions converging to 1;
we omit the details). We get, with $g_{0}:=\int_{\mathbb{R}^{d}}G_{v_{0}%
}\left(  \mathbf{v}\right)  d\mathbf{v}$,%
\begin{align*}
M_{t} &  =M_{0}+g_{0}\int_{0}^{t}\int_{\mathbb{R}^{d}}\alpha\left(
C_{s}\left(  \mathbf{x}\right)  \right)  \left(  \pi_{1}p_{s}\right)  \left(
d\mathbf{x}\right)  ds\\
&  +g_{0}\int_{0}^{t}\int_{\mathbb{R}^{d}}\beta\left(  C_{s}\left(
\mathbf{x}\right)  \right)  \int_{0}^{s}\widetilde{p}_{r}\left(
d\mathbf{x}\right)  drds\\
&  -\gamma\int_{0}^{t}\int_{\mathbb{R}^{d}\times\mathbb{R}^{d}}h\left(
\int_{0}^{s}\left(  K_{2}\ast\widetilde{p}_{r}\right)  \left(  \mathbf{x}%
\right)  dr\right)  p_{s}\left(  d\mathbf{x},d\mathbf{v}\right)  ds.
\end{align*}
Since $M_{0}>0$ and the function $M_{t}$ is continuous ($p_{t}$ is weakly
continuous), there is an open interval $\left(  0,\tau\right)  $ where
$M_{t}>0$, such that either $\tau=+\infty$, or $\tau<\infty$ and $M_{\tau}=0$.
We have to exclude the second case. For $\,t\in\left(  0,\tau\right)  $ we
have%
\[
-\log M_{t}=-\log M_{0}-\int_{0}^{t}\frac{1}{M_{s}}\frac{d}{ds}M_{s}ds
\]
hence, from the previous identity%
\begin{align*}
-\log M_{t} &  =-\log M_{0}-\int_{0}^{t}\frac{1}{M_{s}}g_{0}\int%
_{\mathbb{R}^{d}}\alpha\left(  C_{s}\left(  \mathbf{x}\right)  \right)
\left(  \pi_{1}p_{s}\right)  \left(  d\mathbf{x}\right)  ds\\
&  -\int_{0}^{t}\frac{1}{M_{s}}g_{0}\int_{\mathbb{R}^{d}}\beta\left(
C_{s}\left(  \mathbf{x}\right)  \right)  \int_{0}^{s}\widetilde{p}_{r}\left(
d\mathbf{x}\right)  drds\\
&  +\int_{0}^{t}\frac{1}{M_{s}}\gamma\int_{\mathbb{R}^{d}\times\mathbb{R}^{d}%
}h\left(  \int_{0}^{s}\left(  K_{2}\ast\widetilde{p}_{r}\right)  \left(
\mathbf{x}\right)  dr\right)  p_{s}\left(  d\mathbf{x},d\mathbf{v}\right)  ds.
\end{align*}
Since $g_{0}\geq0$, $\alpha\left(  \cdot\right)  \geq0$, $\beta\left(
\cdot\right)  \geq0$, $\pi_{1}p_{s}$ and $\widetilde{p}_{r}$ are non-negative
measures, the first two integral terms are positive, hence we have%
\[
-\log M_{t}\leq-\log M_{0}+\int_{0}^{t}\frac{1}{M_{s}}\gamma\int%
_{\mathbb{R}^{d}\times\mathbb{R}^{d}}h\left(  \int_{0}^{s}\left(  K_{2}%
\ast\widetilde{p}_{r}\right)  \left(  \mathbf{x}\right)  dr\right)
p_{s}\left(  d\mathbf{x},d\mathbf{v}\right)  ds.
\]
Since $h$ is bounded we have
\begin{align*}
-\log M_{t} &  \leq-\log M_{0}+\gamma\int_{0}^{t}\frac{1}{M_{s}}%
\int_{\mathbb{R}^{d}\times\mathbb{R}^{d}}p_{s}\left(  d\mathbf{x}%
,d\mathbf{v}\right)  ds\\
&  =-\log M_{0}+\gamma\int_{0}^{t}1ds\leq-\log M_{0}+\gamma T.
\end{align*}
It follows that%
\[
M_{t}\geq C_{p_{0},T}:=\exp\left(  \log M_{0}-\gamma T\right)  >0.
\]
The last claim of the theorem, on $M_{t}^{N}$, is direct consequence of the
convergence in probability of $Q_{N}\left(  t\right)  $ to $p_{t}$, in
$L^{\infty}\left(  0,T;\mathcal{M}_{1}\left(  \mathbb{R}^{d}\times
\mathbb{R}^{d}\right)  \right)  $.
\end{proof}

\section{{\protect\small References}}

\end{document}